\definecolor{shadecolor}{gray}{0.875}
\definecolor{col}{RGB}{42, 95, 151}
\theoremstyle{plain}
\newtheorem{theo}{Theorem}[section]
\newtheorem{lemm}[theo]{Lemma}
\newtheorem{prop}[theo]{Proposition}
\newtheorem{coro}[theo]{Corollary}
\theoremstyle{definition}
\newtheorem{rema}[theo]{Remark}
\newtheorem{defi}[theo]{Definition}
\newtheorem{exam}[theo]{Example}
\newcommand{\llbr}{\ensuremath{\llbracket}}
\newcommand{\rrbr}{\ensuremath{\rrbracket}}
\newcommand{\llpar}{(\negthinspace(}
\newcommand{\rrpar}{)\negthinspace)}
\DeclareMathOperator{\Spec}{Spec}
\newcommand{\Hom}{\ensuremath{\mathrm{Hom}}}
\newcommand{\LL}{\mathbb{L}}
\newcommand{\N}{\mathbb{N}}
\newcommand{\Z}{\mathbb{Z}}
\newcommand{\Q}{\mathbb{Q}}
\newcommand{\R}{\mathbb{R}}
\newcommand{\A}{\mathbb{A}}
\newcommand{\PP}{\mathbb{P}}
\newcommand{\RR}{\mathbb{R}}
\newcommand{\cU}{\mathscr{U}}
\newcommand{\cV}{\mathscr{V}}
\newcommand{\cX}{\mathscr{X}}
\newcommand{\cY}{\mathscr{Y}}
\newcommand{\cZ}{\mathscr{Z}}
\newcommand{\Var}{\ensuremath\mathrm{Var}}
\newcommand{\Supp}{\ensuremath\mathrm{Supp}}
\newcommand{\ord}{\ensuremath\mathrm{ord}}
\newcommand{\Gro}{\ensuremath\mathbf{K}}
\newcommand{\SB}{\ensuremath\mathrm{SB}}
\newcommand{\sbmap}{\ensuremath\mathrm{sb}}
\newcommand{\trop}{\ensuremath\mathrm{trop}}
\newcommand{\sbir}[1]{[#1]_{\mathrm{sb}}}
\DeclareMathOperator{\codim}{codim}
\DeclareMathOperator{\Vol}{Vol}
\DeclareMathOperator{\Volsb}{Vol_{\mathrm{sb}}}
\numberwithin{equation}{subsection}
\definecolor{mycol}{rgb}{0.36, 0.54, 0.66}
\definecolor{mycol}{rgb}{0.0, 0.5, 1.0}
\definecolor{mycol}{rgb}{0.13, 0.67, 0.8}
\definecolor{mycol}{rgb}{0.19, 0.55, 0.91}
\definecolor{mycol}{RGB}{62, 118, 201}
\definecolor{mycol2}{rgb}{0.0, 0.5, 0.0}
\title{Tropical degenerations and stable rationality}
\author{Johannes Nicaise}
\address{Imperial College,
Department of Mathematics, South Kensington Campus,
London SW7 2AZ, UK, and KU Leuven, Department of Mathematics, Celestijnenlaan 200B, 3001 Heverlee, Belgium} \email{j.nicaise@imperial.ac.uk}
\author{John Christian Ottem}
\address{Department of Mathematics, University of Oslo, Box 1053, Blindern, 0316 Oslo, Norway}
\email{johnco@math.uio.no}
\date{\today}
\begin{document}
\begin{abstract}
We use the motivic obstruction to stable rationality introduced by Shinder and the first-named author to establish several new classes of stably irrational hypersurfaces and complete intersections. In particular, we show that very general quartic fivefolds and complete intersections of a quadric and a cubic in $\mathbb P^6$ are stably irrational. An important new ingredient is the use of tropical degeneration techniques. 
\end{abstract}

\maketitle
\thispagestyle{empty}
\vspace{-0.5cm}
{
  \hypersetup{linkcolor=black}
\tableofcontents 
}

\section{Introduction}
A central question in algebraic geometry is the {\em rationality problem}, which asks whether a given algebraic variety $X$ over a field $k$ is rational,  or, more generally, {\em stably rational}, which means that $X\times_k \mathbb P^m_k$ is rational for some $m\ge 0$.  Here the case of hypersurfaces in $\PP^{n+1}_k$ is particularly important, with classical results going back to Clemens--Griffiths \cite{CG} and Iskovskikh--Manin \cite{IM} in the early 1970s, and Koll\'ar \cite{kollar} in the 1990s. In recent years, there has been much progress on this problem, triggered by Voisin's specialization technique \cite{voisin}, as well as subsequent developments (see \cite{CTP, totaro, schreieder}). For instance, Schreieder proved in \cite{schreieder} that a very general projective hypersurface of degree $d$ and dimension $n\geq 3$ is stably irrational if $d\geq \log_2n + 2$, generalizing existing results by Totaro \cite{totaro} and Kollár \cite{kollar}. Despite these breakthroughs, many fundamental questions remain open. Famous unsolved problems include the rationality of a very general cubic fourfold, the stable rationality of a very general cubic threefold, and the rationality of smooth quartic hypersurfaces. 
 

In \cite{NiSh}, Evgeny Shinder and the first-named author introduced a powerful new tool in the study of rationality properties of algebraic varieties over fields of characteristic zero. They showed that, to any strictly toroidal one-parameter degeneration, one can attach a motivic obstruction to the stable rationality of the geometric generic fiber. They furthermore used this obstruction to prove that the locus of stably rational geometric fibers in mildly singular families is closed under specialization. This method was further improved in \cite{KT} to prove specialization of rationality instead of stable rationality. A unified treatment of these results was given in \cite{NO}.

 The aim of the present paper is to apply these techniques to concrete rationality questions. Our main applications are the following (all in characteristic zero):\begin{itemize}
\item A very general quartic fivefold is stably irrational (Corollary \ref{cor:quartic-fivefold}). Stable irrationality of quartic hypersurfaces was previously known only in dimensions $n\leq 4$ \cite{CTP, totaro}. In dimensions $n> 5$, we prove the stable irrationality of very general hypersurfaces of degree $d\geq 4$ assuming stable irrationality of a special quartic in dimension $n-1$ (Theorems \ref{theo:quartic1} and \ref{theo:quartic2}). We also establish analogous conditional results in higher degrees (Proposition \ref{doublehypersurface}).

\item Bounds for the stable irrationality of hypersurfaces in products of projective spaces. In particular, we classify the bidegrees corresponding to stably irrational hypersurfaces of dimension at most $4$ in products of projective spaces  (Proposition \ref{prop:productproj} and Theorem \ref{P1P4}). 

\item If a very general hypersurface of degree $d$ and dimension $n$ is stably irrational, 
then a very general hypersurface of degree $d+1$ and dimension $n$ or $n+1$ is also stably irrational (Theorem \ref{theo:slope}). 

\item A very general intersection of a quadric and a cubic in $\mathbb{P}^6_k$ is stably irrational (Theorem \ref{theo:quadcub}). Apart from the cubic fourfold, this was the only open case for  complete intersection fourfolds. We also prove that a very general intersection of two cubics in $\mathbb{P}^7_k$ is stably irrational (Corollary \ref{coro:33}).

\item Significantly sharper bounds for stable irrationality of very general complete intersections, at the level of Schreieder's bounds for hypersurfaces \cite{schreieder}, see Theorems \ref{theo:ci} and \ref{theo:ci2}. We use Schreieder's bounds for hypersurfaces as an essential ingredient in the proof. As a special case of Theorem \ref{theo:ci2}, we obtain that, when $n\geq 4$ and $r\geq n-1$, a very general complete intersection of $r$ quadrics in $\mathbb{P}^{n+r}_k$ is stably irrational (Corollary \ref{coro:quadrics}). This generalizes results by Hassett, Pirutka \& Tschinkel \cite{HPT3quad} and Chatzistamatiou \& Levine \cite{CL}. 
\end{itemize}

Given a strictly toroidal one-parameter degeneration, 
 the motivic obstruction to stable rationality of the geometric generic fiber takes the form of an alternating sum of the virtual stable birational types of the strata in the degenerate fiber. In this way, the stable irrationality of the geometric generic fiber can be deduced from the stable irrationality of special varieties of lower dimensions and/or degrees. For instance, we degenerate the quartic fivefold to a union of two varieties which intersect along a very general quartic double fourfold, which is known to be stably irrational \cite{HPTdouble}.
 To prove the non-triviality of the obstruction, it is essential to control the stable birational types of different strata in the special fiber to make sure that the stably irrational strata do not cancel out in the alternating sum. 
 An important tool in this context is the result by Shinder on variation of stable birational types in families \cite{shinder}. We refine and extend this result in a more general setting (Theorem \ref{theo:variation}) and use it at various places to prove the non-triviality of the motivic obstruction to stable rationality.

 In order to apply these methods to concrete problems, one needs to construct suitable degenerations. For this purpose, we use the theory of tropical compactifications, which associates toroidal degenerations to a large class of polyhedral subdivisions of the Newton polytope of a hypersurface in an algebraic torus. We compute the motivic obstruction for such degenerations by means of the tropical formulas for the motivic nearby fiber in \cite{NPS}; see Theorems \ref{theo:nondeg} and \ref{theo:main}. This makes it possible to apply combinatorial methods to rationality problems in algebraic geometry: one tries to prove stable irrationality of a hypersurface by showing that its Newton polytope $\Delta$ contains the Newton polytope of a hypersurface that is already known to be stably irrational, and such that this smaller polytope is not contained in the boundary of $\Delta$.

Degeneration techniques have been widely used in the study of rationality questions, especially after Voisin's pioneering work on the specialization technique \cite{voisin, CTP, totaro, schreieder}. A distinctive feature of our approach is that our degenerate fibers typically have many irreducible components, whereas Voisin considered degenerations with integral fibers\footnote{However, applications involving reducible degenerate fibers have been worked out in \cite{totaro,CL,CT}. We are grateful to Jean-Louis Colliot-Th\'el\`ene for pointing out these references.}. Moreover, in many of our applications, the stable irrationality of the geometric generic fiber is deduced from the stable irrationality of strata in the special fiber of smaller dimension. To the best of our knowledge, such a systematic reduction in dimension has not been achieved with other methods, and we are confident that it will have further applications beyond those explored in the present article.

The paper is organized as follows. In Section \ref{sec:sbir} we recall the main results on the motivic obstruction to stable rationality from \cite{NiSh} and \cite{NO}, and phrase them in a form that is suitable for our applications. Section \ref{sec:trop} is the technical heart of the paper: we introduce the necessary tools from tropical geometry, and give a tropical formula for the motivic obstruction (Theorems \ref{theo:nondeg} and \ref{theo:main}). In the remaining sections, we discuss the applications that were outlined above: we consider hypersurfaces in projective spaces in  Section \ref{sec:app}, hypersurfaces in products of projective spaces in Section \ref{sec:prodproj}, and finally complete intersections in projective spaces in Section \ref{sec:ci}.

\medskip

\noindent {\bf Acknowledgements.} We are grateful to Jean-Louis Colliot-Th\'el\`ene, Daniel Litt, Sam Payne, Alex Perry,  Stefan Schreieder, Kristin Shaw, Evgeny Shinder and Claire Voisin for helpful discussions at various stages of this project. 
 This project was started while both authors were visiting the Higher School of Economics in Moscow  for a workshop on birational geometry. They would like to thank the HSE and the organizers of the workshop, Yuri Prokhorov and Konstantin Shramov, for their hospitality. JN is supported by EPSRC grant EP/S025839/1, grants G079218N and G0B17121N of the Fund for Scientific Research--Flanders, and long term structural funding (Methusalem
grant) of the Flemish Government. JCO is supported by the Research Council of Norway project no. 250104.

\medskip

\noindent {\bf Notations.} We denote by $k$ an algebraically closed field of characteristic zero, and by
$K$ the field of Puiseux series over $k$, that is,
$$K=\bigcup_{n>0}k\llpar t^{1/n}\rrpar.$$
We consider the $t$-adic valuation
$$\mathrm{ord}_t\colon K^{\times}\to \Q,\,a\mapsto \mathrm{ord}_t(a)$$
on $K$. For every positive integer $n$ and every non-zero element $$a=\sum_{i\in \Z}a_i t^{i/n}$$ of $k\llpar t^{1/n}\rrpar$, the value $\mathrm{ord}_t(a)$ is the minimum of the set
$\{i/n\,|\,i\in \Z,\,a_i\neq 0\}.$
 We denote by $R$ the valuation ring in $K$; thus
 $$
 R=\{0\}\cup \{a\in K^{\times}\,|\,\ord_t(a)\geq 0\}=\bigcup_{n>0}k\llbr t^{1/n}\rrbr.$$

We adopt the following convention in our notations for projective bundles: when $S$ is a scheme and $\mathcal{E}$ is a locally free sheaf of finite rank on $S$, we denote by $\mathbb{P}\mathcal{E}$ the projective bundle
$\mathrm{Proj}\,\mathrm{Sym}_{\mathcal{O_S}}(\mathcal{E}^{\vee})$
 on $S$. In particular, when $V$ is a vector space over a field $F$, then $\mathbb{P}V$ is a projective space over $F$ whose $F$-points correspond to one-dimensional subspaces of $V$.

\section{Specialization of stable birational types}\label{sec:sbir}
In this section, we recall some results and tools from \cite{NiSh} and \cite{NO} that we will need in this paper.

\subsection{Stable birational types and the theorem of Larsen \& Lunts}
Let $F$ be a field. Two  $F$-schemes of finite type $X$ and $Y$ are called {\em stably birational} if $X\times_F \mathbb{P}_F^{\ell}$ is birational to $Y\times_F \mathbb{P}_F^m$ for some non-negative integers $\ell$ and $m$. We say that $X$ is {\em stably rational} if it is stably birational to the point $\Spec F$. We denote by $\SB_F$ the set of stable birational equivalence classes of integral\footnote{We follow the convention that integral schemes are assumed to be non-empty.} $F$-schemes of finite type; the equivalence class of $X$ will be denoted by $\sbir{X}$.
 We write $\Z[\SB_F]$ for the free abelian group on the set $\SB_F$.
 For every $F$-scheme $Z$ of finite type, we set
 $$\sbir{Z}=\sum_{i=1}^r\sbir{Z_i}$$ in $\Z[\SB_F]$,   where $Z_1,\ldots,Z_r$ are the irreducible components of $Z$ (with their induced reduced structures). In particular, $\sbir{\emptyset}=0$.  Note that two reduced $F$-schemes of finite type are stably birational if and only if they define the same element in $\Z[\SB_F]$.

 There exists a unique ring structure on $\Z[\SB_F]$ such that
 $$\sbir{X}\cdot \sbir{Y}=\sbir{X\times_F Y}$$ for all $F$-schemes $X$ and $Y$ of finite type.
 The identity element for the ring multiplication is $\sbir{\Spec F}$, the class of stably rational $F$-schemes of finite type.

 We denote by $\Gro(\Var_F)$ the Grothendieck ring of $F$-varieties.
 As an abelian group, it is characterized by the following presentation:
\begin{itemize}
\item {\em Generators:} isomorphism classes $[X]$ of $F$-schemes $X$ of finite type.
\item {\em Relations:} whenever $Y$ is a closed subscheme of $X$, we have
$$[X]=[Y]+[X\setminus Y].$$
  \end{itemize}
 This abelian group has a unique ring structure such that $[X]\cdot [Y]=[X\times_F Y]$ for all
 $F$-schemes $X$ and $Y$ of finite type.
 We write $\LL=[\A^1_F]$ for the class of the affine line in $\Gro(\Var_F)$.
  To simplify the notation, if $\varphi\colon \Gro(\Var_F)\to A$ is any map to some set $A$,
then we will write $\varphi(Y)$ instead of $\varphi([Y])$ whenever $Y$ is an $F$-scheme of finite type.

 The following fundamental theorem was proved in \cite{LaLu}.
 \begin{theo}[Larsen \& Lunts]\label{theo:lalu}
  Assume that $F$ has characteristic zero.
There exists a unique ring morphism
$$\sbmap\colon \Gro(\Var_F)\to \Z[\SB_F]$$
that maps $[X]$ to $\sbir{X}$ for every smooth and proper $F$-scheme $X$.
It factors through an isomorphism
$$\Gro(\Var_F)/(\LL)\to \Z[\SB_F].$$
\end{theo}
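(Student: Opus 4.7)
The plan is to deduce the theorem from two deep inputs that are available in characteristic zero: Bittner's presentation of $\Gro(\Var_F)$ (which rests on Hironaka's resolution of singularities) and the weak factorization theorem of Abramovich--Karu--Matsuki--W\l odarczyk. Bittner's presentation exhibits $\Gro(\Var_F)$ as the abelian group generated by isomorphism classes $[X]$ of smooth projective $F$-schemes, modulo the relations $[\emptyset]=0$ and
$$[\Bl_Y X]+[Y]=[X]+[E]$$
whenever $Y\subset X$ is a smooth closed subvariety with exceptional divisor $E$ of the blow-up.

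For existence of $\sbmap$, I would define it on a smooth projective generator by $[X]\mapsto \sbir{X}$ and extend $\Z$-linearly. The blow-up relation is preserved: the map $\Bl_Y X\to X$ is birational, so $\sbir{\Bl_Y X}=\sbir{X}$, and the exceptional divisor $E\to Y$ is a projective bundle, so $\sbir{E}=\sbir{Y}$. Multiplicativity on generators follows from $\sbir{X\times_F Y}=\sbir{X}\cdot \sbir{Y}$. Uniqueness is immediate since Bittner's generators span $\Gro(\Var_F)$ as a group.

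To obtain the isomorphism, first observe that $\sbmap(\PP^1_F)=\sbir{\Spec F}=1$, hence $\LL=[\PP^1_F]-1$ lies in $\ker(\sbmap)$, and $\sbmap$ descends to a ring map $\Gro(\Var_F)/(\LL)\to \Z[\SB_F]$. For the inverse, I would send a class $\sbir{X}\in \SB_F$ (with $X$ integral) to $[\widetilde{X}]\bmod \LL$, where $\widetilde{X}$ is any smooth projective model of the function field of $X$ (existence by resolution of singularities). Well-definedness has two aspects. For representatives: any two smooth projective varieties $\widetilde{X},\widetilde{X}'$ with stably birational function fields become isomorphic after composing with projections from products $\widetilde{X}\times \PP^\ell_F$, which satisfy $[\widetilde{X}\times \PP^\ell_F]=[\widetilde{X}]\cdot(1+\LL+\cdots+\LL^\ell)\equiv [\widetilde{X}]\pmod{\LL}$, and by weak factorization any birational map between smooth projective varieties is a chain of blow-ups and blow-downs along smooth centers, each of which changes the class in $\Gro(\Var_F)$ by $[E]-[Y]$, a multiple of $\LL$ by the projective bundle formula. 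For additivity/multiplicativity of the inverse: these follow by inspection on generators, since $\sbir{X}+\sbir{Y}$ corresponds to the class of the disjoint union $\widetilde{X}\sqcup \widetilde{Y}$, and products of smooth projective varieties are smooth projective. Finally, one checks that the two maps are mutually inverse on generators, which is tautological.

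The main obstacle is the well-definedness of the inverse, which is exactly where the deep inputs from characteristic-zero birational geometry are needed; concretely, weak factorization is essential for reducing the comparison of two smooth projective models to the single blow-up relation controlled by Bittner, and without it the identity modulo $(\LL)$ could not be established.
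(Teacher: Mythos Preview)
Your argument is correct. The paper does not give its own proof of this theorem; it simply attributes the result to Larsen and Lunts and cites \cite{LaLu}. Your approach via Bittner's blow-up presentation is the standard modern route and is essentially equivalent to the original argument in \cite{LaLu}, both ultimately resting on the weak factorization theorem in characteristic zero.
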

 Beware that one usually has $\sbmap(Y)\neq \sbir{Y}$ when $Y$ is not smooth and proper; for instance, $\sbmap(\A^1_F)=\sbmap(\mathbb{P}^1_F)-\sbmap(\Spec F)=0$.

\subsection{The stable birational volume}
 The stable birational volume is a version of the nearby cycles functor for stable birational types that controls how these degenerate in families.
  It can be constructed in terms of a particular class of models.
 A monoid $M$ is called {\em toric} if it is finitely generated, integral, saturated, and sharp (that is, $0$ is the only invertible element in $M$). This is equivalent to saying that $M$ is isomorphic to the monoid of lattice points in a strictly convex rational polyhdral cone. To any monoid $M$ we can attach its monoid $R$-algebra $R[M]$; the monomial associated with an element $m\in M$ will be denoted by $x^m$.
 
Let $\cX$ be a flat separated $R$-scheme of finite presentation.
 We say that $\cX$ is {\em strictly toroidal} if, Zariski-locally on $\cX$, we can find a smooth morphism
 $$\cX\to \Spec R[M]/(x^m - t^q)$$ for some toric monoid $M$, some positive rational number $q$, and some element $m$ in $M$ such that $k[M]/(x^m)$ is reduced.
 
 If $\cX$ is strictly toroidal, then a {\em stratum} of the special fiber $\cX_k$ is a connected component $E$ of an intersection of irreducible components of $\cX_k$. The codimension $\codim(E)$ of the stratum $E$ is defined to be the codimension in $\cX_k$. 
 We will denote the set of strata of $\cX_k$ by $\mathcal{S}(\cX)$.

\begin{exam}\label{exam:standardmod}
Let $r$ and $s$ be positive integers, and let $a=(a_1,\ldots,a_r)$ and $b=(b_1,\ldots,b_s)$ be tuples of positive integers. With these data we associate the $R$-schemes 
\begin{eqnarray*}
\cX_{a} &=& \Spec R[x_{i,j}\,|\,i=1,\ldots,r;\,j=1,\ldots,a_i]/(t-\prod_{j=1}^{a_1}x_{1,j} ,\ldots,t-\prod_{j=1}^{a_r}x_{r,j}),
\\ \cY_{b} &=& \Spec R[y_{i,j}\,|\,i=1,\ldots,s;\,j=0,\ldots,b_i]/(ty_{1,0}-\prod_{j=1}^{b_1}y_{1,j} ,\ldots,ty_{s,0}-\prod_{j=1}^{b_s}y_{s,j}).
\end{eqnarray*}
Then it is easy to see that the schemes $\cX_a$, $\cY_b$ and $\cX_a\times_R \cY_b$ are strictly toroidal; we will only spell out the third case. Let $M$ be the quotient of the free monoid
$$\N e_{0}\oplus \bigoplus_{\substack{i=1,\ldots,r\\j=1,\ldots,a_i}}\N e_{i,j}\oplus \bigoplus _{\substack{i=1,\ldots,s\\j=0,\ldots,b_i}}\N f_{i,j}$$
by the congruence relations $$\left\{\begin{array}{ll} e_{0}=\sum_{j=1}^{a_i}e_{i,j}& \text{ for $i=1,\ldots,r$},
\\ e_{0}+f_{i,0}=\sum_{j=1}^{b_i}f_{i,j}& \text{ for $i=1,\ldots,s$}.
\end{array}\right.$$
Then $M$ is a toric monoid, and $$\cX_{a}\times_R \cY_b\cong \Spec R[M]/(x^{e_{0}}-t).$$
 It follows that every flat separated $R$-scheme $\cX$ of finite presentation that admits Zariski-locally a smooth morphism to a scheme of the form $\cX_a$, $\cY_b$ or $\cX_a\times_R \cY_b$ is also strictly toroidal. We say that $\cX$ is {\em strictly semi-stable} if it admits Zariski-locally a smooth morphism to a scheme of the form $\cX_a$ with $r=1$.

A similar calculation shows that the product of two strictly toroidal $R$-schemes is again strictly toroidal. This is one of the advantages of this class of schemes over the more restrictive class of strictly semi-stable $R$-schemes, which is not closed under products: every scheme of type $\cX_a$ is a product of strictly semi-stable schemes, but it is only strictly semi-stable if there is at most one index $i$ with $a_i\geq 2$. The easiest counterexample is $\Spec R[x,y,z,w]/(t-xy,t-zw)$, whose special fiber has four irreducible components of dimension $2$ intersecting at the origin, which never happens for strictly semi-stable schemes.
\end{exam}

Strictly toroidal degenerations arise naturally when we break up projective hypersurfaces into pieces of smaller degrees. The following construction is taken from Example 3.2.4 in \cite{NO}.
\begin{exam}\label{exam:toroidal}
 Let $f_0,\ldots,f_r\in k[z_0,\ldots,z_{n+1}]$ be general homogeneous polynomials of positive  degrees $d_0,\ldots,d_r$ such that $d_0=d_1+\ldots +d_r$. Then 
$$\cX=\mathrm{Proj}\,R[z_0,\ldots,z_{n+1}]/(tf_0-f_1\cdot \ldots \cdot f_r)$$ is strictly toroidal, but not strictly semi-stable at the points of $\cX_k$ where $f_0$ and at least two among $f_1,\ldots,f_r$ vanish.
\end{exam}

The following result from \cite{NO}, which generalizes \cite{NiSh}, furnishes a version of the nearby cycles functor for stable birational types.
\begin{theo}\label{theo:vol}
There exists a unique ring morphism
$$\Volsb\colon \Z[\SB_K]\to \Z[\SB_k]$$
such that, for every strictly toroidal proper $R$-scheme $\cX$ with smooth generic fiber $X=\cX_K$,
 we have
\begin{equation}\label{eq:volsb}
\Volsb(\sbir{X})=\sum_{E\in \mathcal{S}(X)}(-1)^{\codim(E)}\sbir{E}.
\end{equation}
\end{theo}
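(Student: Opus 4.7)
The plan is to realize $\Volsb$ as the reduction modulo $(\LL)$ of a motivic volume on Grothendieck rings, transported to stable birational classes through the Larsen--Lunts isomorphism (Theorem \ref{theo:lalu}). For uniqueness, $\Z[\SB_K]$ is generated as an abelian group by the classes $\sbir{X}$ of smooth proper $K$-varieties $X$; since $K$ already contains all fractional powers of $t$, the semi-stable reduction theorem produces a strictly semi-stable (hence strictly toroidal) proper $R$-model for any such $X$, so formula \ref{eq:volsb} pins $\Volsb$ down on a generating set, and the ring condition then determines the map on products.

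For existence, I would first lift the problem by constructing a ring homomorphism
$$\widetilde{\Vol}\colon \Gro(\Var_K)\longrightarrow \Gro(\Var_k)$$
whose value on the class of a smooth proper variety $X$, endowed with a strictly toroidal proper model $\cX$, is
$$\widetilde{\Vol}([X])=\sum_{E\in \mathcal{S}(\cX)}[E]\cdot (\LL-1)^{\codim E}.$$
This is (a form of) the motivic nearby fiber of $\cX$. Since $\LL-1\equiv -1\pmod{\LL}$, the expression reduces modulo $(\LL)$ to $\sum_{E}(-1)^{\codim E}[E]$, so composing with the Larsen--Lunts isomorphism at both ends produces the desired $\Volsb$. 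To define $\widetilde{\Vol}$ consistently on all of $\Gro(\Var_K)$ from its values on smooth proper classes, I would invoke Bittner's presentation of the Grothendieck ring (generators: smooth proper varieties; relations: blow-up relations along smooth closed centers).

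The main obstacle is to prove that the defining formula for $\widetilde{\Vol}([X])$ is independent of the chosen strictly toroidal model $\cX$ and is compatible with Bittner's blow-up relations. My approach would be weak factorization: any two strictly toroidal proper models with the same smooth generic fiber can be connected by a chain of blow-ups along closed strata (which preserve the toroidal structure), and one checks combinatorial invariance of the formula under a single such blow-up, exploiting that the new strata are built explicitly from the old strata together with the projective bundles forming the exceptional divisor. Once invariance is in hand, multiplicativity comes almost for free: Example \ref{exam:standardmod} shows that products of strictly toroidal $R$-schemes are strictly toroidal, strata of the product are products of strata, and codimensions add, so the factors $(\LL-1)^{\codim E}$ combine correctly. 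With $\widetilde{\Vol}$ thus established as a ring morphism, passing to the quotient by $(\LL)$ via Theorem \ref{theo:lalu} produces $\Volsb$ with the required property.
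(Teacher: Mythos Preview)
The paper does not argue this in the text; it simply cites Corollary 3.3.5 of \cite{NO}. Your overall architecture---construct a motivic lift on Grothendieck rings and descend through Larsen--Lunts---is indeed how \cite{NiSh} and \cite{NO} proceed, and your uniqueness argument via semi-stable reduction over the algebraically closed field $K$ is fine.

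There are two problems with the existence argument. First, the proposed lift $\widetilde{\Vol}([X])=\sum_{E}[E](\LL-1)^{\codim E}$ is \emph{not} model-independent in $\Gro(\Var_k)$: already for $X=\PP^1_K$, the smooth model gives $[\PP^1_k]=\LL+1$, whereas the two-component model obtained by blowing up a $k$-point of the special fiber gives $2[\PP^1_k]+(\LL-1)=3\LL+1$. The correct motivic lift is $\sum_E(-1)^{\codim E}[E]\,[\PP^{\codim E}_k]$ (equivalently $\sum_E [E^{\circ}](1-\LL)^{\codim E}$ in terms of open strata), which still reduces to the required alternating sum modulo $\LL$; so this slip is easily repaired. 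The more serious gap is your weak-factorization claim. The assertion that any two strictly toroidal proper $R$-models of the same $X$ are connected by blow-ups along closed strata is not an off-the-shelf theorem, and it is precisely the technical core of the result. In \cite{NiSh} the analogous statement is established only for strictly \emph{semi-stable} (hence regular) models, and already there it relies on Abramovich--Karu toroidal factorization combined with a domination argument; extending this to the broader class of strictly toroidal, possibly singular models over the non-Noetherian ring $R$ is exactly the work carried out in \cite{NO}. Your outline identifies the right strategy but packages the substance of the theorem into an unproved lemma. (As the paper notes at the end of Section~\ref{sec:sbir}, one can also reach $\Volsb$ by descending the Hrushovski--Kazhdan motivic volume $\Vol$, whose construction is model-free; the formula on strictly toroidal models then becomes a computation rather than a definition, bypassing factorization at the cost of importing heavier machinery.)
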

\begin{proof}
This is Corollary 3.3.5 in \cite{NO}.
\end{proof}

\begin{exam}\label{exam:goodmodel}
Let $X$ be a smooth and proper $K$-scheme, and assume that $X$ has a smooth and proper $R$-model $\cX$. Then $\Volsb(\sbir{X})=\sbir{\cX_k}$. In particular, $\Volsb(\sbir{\Spec K})=\sbir{\Spec k}$.
\end{exam}

\begin{coro}\label{coro:obstruction} \item
\begin{enumerate}
\item Let $X$ be a smooth and proper $K$-scheme. If 
$$\Volsb(\sbir{X})\neq \sbir{\Spec k}$$ in $\Z[\SB_k]$, then $X$ is not stably rational.

\item Let $\cX$ be a strictly toroidal proper $R$-scheme with smooth generic fiber $X=\cX_K$. If
$$ \sum_{E\in \mathcal{S}(\cX)}(-1)^{\codim(E)}\sbir{E}\neq \sbir{\Spec k}$$ in $\Z[\SB_k]$, then $X$ is not stably rational.
\end{enumerate}
\end{coro}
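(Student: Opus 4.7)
The corollary should follow almost immediately from Theorem \ref{theo:vol}, so my plan is to reduce both parts to the fact that $\Volsb$ is a ring morphism that respects the stable birational class of a point.

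For part (1), I would argue by contrapositive. Suppose $X$ is stably rational, meaning $\sbir{X}=\sbir{\Spec K}$ in $\SB_K$ (hence also in $\Z[\SB_K]$). Since $\Volsb$ is a ring morphism, it sends the multiplicative identity of $\Z[\SB_K]$, which is $\sbir{\Spec K}$, to the multiplicative identity of $\Z[\SB_k]$, which is $\sbir{\Spec k}$. Therefore
\[
\Volsb(\sbir{X}) = \Volsb(\sbir{\Spec K}) = \sbir{\Spec k},
\]
contradicting the hypothesis. This establishes (1).

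For part (2), Theorem \ref{theo:vol} applied to the model $\cX$ directly gives
\[
\Volsb(\sbir{X}) = \sum_{E\in \mathcal{S}(\cX)}(-1)^{\codim(E)}\sbir{E},
\]
since $\cX$ is strictly toroidal, proper, with smooth generic fiber $X$. If the right-hand side differs from $\sbir{\Spec k}$, then so does $\Volsb(\sbir{X})$, and part (1) applied to $X$ yields that $X$ is not stably rational.

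There is no real obstacle here: the corollary is essentially a repackaging of Theorem \ref{theo:vol} combined with the observation that a ring morphism preserves the multiplicative identity. The only minor subtlety is checking that the unit of $\Z[\SB_K]$ is indeed $\sbir{\Spec K}$, which was recorded in the paragraph introducing the ring structure on $\Z[\SB_F]$.
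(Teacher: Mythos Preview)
Your proof is correct and matches the paper's own argument essentially line for line: both use the contrapositive for (1), noting that $\Volsb$ sends $\sbir{\Spec K}$ to $\sbir{\Spec k}$, and then derive (2) from (1) via formula~\eqref{eq:volsb}. The only difference is cosmetic---the paper is terser and does not spell out that preservation of the unit follows from $\Volsb$ being a ring morphism.
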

\begin{proof}
If $X$ is stably rational, then $\sbir{X}=\sbir{\Spec K}$ so that
$\Volsb(\sbir{X})=\sbir{\Spec k}$. The second part of the statement follows immediately from the expression for $\Volsb(\sbir{X})$  in terms of a strictly toroidal model in formula \eqref{eq:volsb}.
\end{proof}

We call the morphism  $\Volsb$ from Theorem \ref{theo:vol} the {\em stable birational volume}.
 By Corollary \ref{coro:obstruction}, we can use the stable birational volume as an obstruction to stable rationality.
 In \cite{NiSh}, Theorem \ref{theo:vol} was used to deduce the following important corollary, which we reproduce here for later use.

\begin{coro}[Theorem 4.1.4 and Corollary 4.1.5 in \cite{NiSh}]\label{coro:verygen}
Let $S$ be a Noetherian $\Q$-scheme, and let $X\to S$ and $Y\to S$ be smooth and proper morphisms. Then the set
$$\{s\in S\,|\,X\times_S \overline{s}\mbox{ is stably birational to }Y\times_S\overline{s} \mbox{ for any geometric point }\overline{s} \mbox{ based at }s\}$$
is a countable union of closed subsets of $S$.

In particular, the set
$$\{s\in S\,|\,X\times_S \overline{s}\mbox{ is stably rational, for any geometric point }\overline{s} \mbox{ based at }s\}$$
is a countable union of closed subsets of $S$.
\end{coro}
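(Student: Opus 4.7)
The plan is to reduce the corollary to a specialization-in-traits statement, which is then proved using Theorem \ref{theo:vol} and Example \ref{exam:goodmodel}, and finally to deduce the countable-union property by Noetherian induction combined with a variation theorem for stable birational types in families.

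\emph{Specialization in a trait.} The first step is the following: if $V$ is a DVR containing $\Q$ with algebraically closed residue field $k$ and fraction field $K_V$, and $\cX, \cY$ are smooth proper $V$-schemes, then $\sbir{\cX_{K_V}} = \sbir{\cY_{K_V}}$ in $\Z[\SB_{K_V}]$ implies $\sbir{\cX_k} = \sbir{\cY_k}$ in $\Z[\SB_k]$. To prove this, apply the ring morphism $\Volsb$ of Theorem \ref{theo:vol}: by Example \ref{exam:goodmodel}, $\Volsb(\sbir{\cX_{K_V}}) = \sbir{\cX_k}$ and $\Volsb(\sbir{\cY_{K_V}}) = \sbir{\cY_k}$, so the equality of generic fiber classes transports directly to the equality of special fiber classes. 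Since Theorem \ref{theo:vol} is phrased for the specific Puiseux ring $R$ over the fixed base field $k$ of the paper, one first reduces to this case by completing $V$ and embedding $\widehat{V}$ into $R$ via a uniformizer map (after enlarging the residue field to be identified with $k$); this embedding is flat and preserves smooth proper models, hence respects the value of $\Volsb$ on their generic fibers.

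\emph{Extension to families.} Given $S$ and a specialization $s_0 \rightsquigarrow s_1$ in $S$ with $X \times_S \overline{s}_0$ stably birational to $Y \times_S \overline{s}_0$, produce a trait $\Spec V \to S$ sending the closed point to $s_1$ and the generic point to $s_0$, with $V$ a DVR whose residue field is algebraically closed containing $\overline{\kappa(s_1)}$ and whose fraction field contains a finitely generated field of definition of the given stable birational equivalence. Such a trait exists by standard constructions: work inside $\mathcal{O}_{S, s_1}$, quotient by a chain of primes from $s_0$ down to $s_1$, normalize, complete, and enlarge residue and fraction fields as necessary. Applying the specialization principle above to the $V$-schemes obtained by pulling back $X$ and $Y$ gives stable birationality of the geometric fibers at $s_1$. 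Hence the set $T$ in the statement is closed under specialization in $S$.

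\emph{Countable union.} Argue by Noetherian induction on $S$, reducing to the case where $S$ is irreducible with generic point $\eta$. If $X \times_S \overline{\eta}$ is stably birational to $Y \times_S \overline{\eta}$, then by specialization $T = S$ and the conclusion is immediate. Otherwise, the key step is to exhibit a proper closed subset $Z \subsetneq S$ containing $T$, so that the inductive hypothesis applied to $Z$ completes the argument. This confinement is the principal obstacle, since specialization-closedness alone only presents $T$ as a union of closed irreducible subsets with no a priori bound on the number of generic points. The plan is to apply a variation result for stable birational types in families — in the form of the refined Theorem \ref{theo:variation}, building on \cite{shinder} — to $X$ and $Y$ separately, producing a dense open $U \subseteq S$ on which $\sbir{X_{\bar s}}$ and $\sbir{Y_{\bar s}}$ are constant and equal to their respective generic values. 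Since by assumption these generic values differ, $T \cap U = \emptyset$, so $T \subseteq S \setminus U$, a proper closed subset.
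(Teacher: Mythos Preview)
The paper does not give its own proof of this corollary; it is quoted from \cite{NiSh}. Your specialization step (the first two paragraphs) is essentially the argument of Theorem~4.1.4 in \cite{NiSh}: a smooth proper $R$-model is in particular strictly toroidal, so Example~\ref{exam:goodmodel} computes $\Volsb$ and the equality of stable birational types is transported from generic to special fiber. That part is fine.

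The gap is in your ``Countable union'' step. You propose to invoke Theorem~\ref{theo:variation} to produce a dense open $U\subset S$ on which the stable birational types of the fibers of $X$ and $Y$ are constant. But Theorem~\ref{theo:variation} is a statement about hypersurfaces with a fixed Newton polytope satisfying restrictive combinatorial hypotheses; it says nothing about arbitrary smooth proper families $X\to S$, $Y\to S$. Worse, its proof \emph{uses} Corollary~\ref{coro:verygen} (see the second paragraph of the proof of part~\eqref{it:varyface}, and also Proposition~\ref{prop:stabyratpol}), so your argument is circular.

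The missing idea is how \cite{NiSh} actually obtains countability: one parameterizes the \emph{witnesses} of stable birational equivalence. For each pair $(\ell,m)$, a birational equivalence between $X_{\bar s}\times\mathbb{P}^\ell$ and $Y_{\bar s}\times\mathbb{P}^m$ is encoded by the closure of its graph, a subscheme of $X\times_S Y\times_S\mathbb{P}^\ell\times_S\mathbb{P}^m$ with prescribed numerical properties. These subschemes are parameterized by countably many components of a relative Hilbert scheme of finite type over $S$, whose images in $S$ are constructible by Chevalley. The locus $T$ is thus a countable union of constructible subsets; combined with the specialization-closedness you proved, each such constructible set can be replaced by its closure, yielding the countable union of closed subsets. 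Noetherian induction plays no role.
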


This result was further improved in \cite{KT} to get the analogous statements for birational equivalence and rationality instead of stable birational equivalence and stable rationality; see also \cite{NO} for a uniform treatment. For our current purposes, the stable version will be sufficient.

 The stable birational volume $\Volsb$ is closely related to the motivic volume that was constructed by Hrushovski and Kazhdan in \cite{HK}. This is a ring morphism 
$$\Vol\colon \Gro(\Var_K)\to \Gro(\Var_k)$$ that constitutes a motivic upgrade of the stable birational volume $\Volsb$ in the sense that we have a commutative diagram
$$\begin{CD}
\Gro(\Var_K) @>\Vol>> \Gro(\Var_k)
\\ @V\sbmap VV @VV\sbmap V
\\ \Z[\SB_K] @>\Volsb>> \Z[\SB_k].
\end{CD}$$ Thus every formula for the motivic volume can be specialized to a formula for the stable birational volume by applying Larsen and Lunts' realization morphism $\sbmap$ from Theorem \ref{theo:lalu}. We will apply this principle in the proof of Theorem \ref{theo:nondeg}.

\section{Tropical calculation of the stable birational volume}\label{sec:trop}
\subsection{Newton polytopes}
 We recall some standard definitions in tropical geometry; see \cite{MS} for background on Newton polytopes in tropical geometry, and \cite{CLS} for the necessary tools from toric geometry.
Let $n$ be an integer satisfying $n\geq -1$, and let $M$ be a free abelian group of rank $n+1$. We set $M_{\R}=M\otimes_{\Z}\R$.

 Let $F$ be a field, and let $$g=\sum_{m\in M}d_m x^m\in F[M]$$ be a non-zero Laurent polynomial with coefficients in $F$.
The {\em support} of $g$ is the set of lattice points $m$ in $M$ such that $d_m\neq 0$; we denote it by $\mathrm{Supp}(g)$. The convex hull of $\mathrm{Supp}(g)$ in the real vector space $M_{\R}$ is called the {\em Newton polytope} of $g$, and denoted by $\Delta_g$.

 Let $V$ be the real affine subspace of $M_{\R}$ spanned by $\Supp(g)$. We choose a point $m_0$ of $M\cap \Delta_g$ and we  denote by $M_{\Delta_g}$ the sublattice of $M$ generated by $M\cap (V-m_0)$, where $V-m_0$ is the translate of $V$ by $-m_0$. Then the translated polytope $\Delta_g-m_0$ is a full-dimensional lattice polytope in $M_{\Delta_g}\otimes_{\Z}\R$. To such a lattice polytope, one can attach a polarized projective toric $F$-variety $(\mathbb{P}_F(\Delta_g),\mathcal{L}(\Delta_g))$ with dense torus $\Spec F[M_{\Delta_g}]$. The toric variety $\mathbb{P}_F(\Delta_g)$ is defined by the inward normal fan of the polytope $\Delta_g-m_0$.
 We write $Z^o(g)$ for the effective Cartier divisor on $\Spec F[M_{\Delta_g}]$ defined by the equation $g=0$, and $Z(g)$ for the effective Cartier divisor on $\mathbb{P}_F(\Delta_g)$ defined by the global section $g/x^{m_0}$ of $\mathcal{L}(\Delta_g)$.
 These definitions are all independent of the choice of the point $m_0$.

\begin{rema}
The hypersurface defined by $g$ in the torus $\Spec F[M]$ is isomorphic to $Z^o(g)\times_F \mathbb{G}_{m,F}^{\ell}$, where $\ell$ denotes the codimension of $\Delta_g$ in $M_{\R}$.
 In particular, this hypersurface is stably birational to $Z^o(g)$.
\end{rema}

For every lattice polytope $\delta$ contained in $\Delta_g$, we set
$$g_{\delta}=\sum_{m\in M\cap \delta}d_m x^m.$$
If $\delta$ is a face of $\Delta_g$, then $g_{\delta}$ has Newton polytope $\delta$, and we can apply the above definitions to $g_{\delta}$ to obtain a toric variety $\mathbb{P}_F(\delta)$ of dimension $\dim(\delta)$ with subschemes $Z^o(g_{\delta})$ and $Z(g_{\delta})$. If $\delta$ is a vertex of $\Delta_g$ then these subschemes are empty; if $\delta$ has positive dimension, then they are non-empty and of pure codimension 1.

 \begin{defi}\label{defi:nondeg}
 We say that $g$ is {\em Newton non-degenerate} if, for every face $\delta$ of $\Delta_g$, the scheme $Z^o(g_{\delta})$ is smooth over $F$.
 \end{defi}

This definition is due to Kushnirenko (see \cite{kushnirenko} for related notions of Newton non-degeneracy). If $F$ has characteristic zero, then it follows from Bertini's theorem that a general Laurent polynomial $g$ with fixed Newton polytope $\Delta_g$ is Newton non-degenerate. The notion of Newton non-degeneracy was generalized to subvarieties of arbitrary codimension in algebraic tori by Tevelev in \cite{tevelev}; there these subvarieties are called {\em sch{\"o}n} (see also the proof of Proposition \ref{prop:nondeg}\eqref{it:toroidal} below).

\begin{prop}\label{prop:nondeg}
Let $g$ be a non-zero Laurent polynomial in $F[M]$.  Let $\pi\colon X\to \mathbb{P}_{F}(\Delta_g)$ be a proper birational toric morphism. Denote by $Y$ the inverse image of $Z(g)$ in $X$.
\begin{enumerate}
\item \label{it:dense} The scheme $Y$ does not contain any torus orbit in $X$, and $\pi^{-1}(Z^o(g))$ is schematically dense in $Y$. In particular, taking for $\pi$ the identity morphism, we see that $Z^o(g)$ is schematically dense in $Z(g)$.

\item \label{it:orbit}
The Laurent polynomial $g$ is Newton non-degenerate if and only if
the schematic intersection of $Y$ with each torus orbit in $X$ is smooth.

\item \label{it:toroidal}
Assume that $g$ is Newton non-degenerate.
Then we can cover $Y$ by open subschemes that admit an \'etale morphism to a toric $F$-variety. If $X$ is smooth over $F$, then $Y$ is smooth over $F$, as well.
\end{enumerate}
\end{prop}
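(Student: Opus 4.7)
The plan is to work Zariski-locally on the affine toric charts of $X$. Let $\Sigma$ denote the fan of $X$, which refines the normal fan $\Sigma_{\Delta_g}$ of $\Delta_g - m_0$ because $\pi$ is a toric birational morphism. For each cone $\tau \in \Sigma$, the chart $U_\tau = \Spec F[\tau^\vee \cap M_{\Delta_g}]$ contains the orbit $O_\tau$ cut out by the ideal $I_\tau = (x^m : m \in \tau^\vee \setminus \tau^\perp)$. Let $\delta = \delta_\tau$ be the face of $\Delta_g$ corresponding to the smallest cone of $\Sigma_{\Delta_g}$ containing $\tau$; equivalently, $\delta$ is the face on which every vector in the relative interior of $\tau$ attains its minimum over $\Delta_g$. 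Choosing a lattice point $m_\tau \in \delta \cap M$ and trivializing $\pi^* \mathcal{L}(\Delta_g)$ on $U_\tau$ via $x^{m_\tau - m_0}$, the divisor $Y$ on $U_\tau$ becomes the zero locus of
\[
f_\tau \;=\; \sum_{m \in \Supp(g)} d_m\, x^{m - m_\tau} \;=\; x^{-m_\tau} g_\delta \;+\; r_\tau,
\]
where every monomial of $x^{-m_\tau} g_\delta$ lies in the sublattice $\tau^\perp \cap M_{\Delta_g}$ and every monomial of $r_\tau$ lies in $I_\tau$. This local model is the engine of the proof, and depends on $\tau$ meeting the relative interior of the normal cone of $\delta$.

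Part \ref{it:dense} follows directly: the reduction $f_\tau \bmod I_\tau$ equals the nonzero element $x^{-m_\tau} g_\delta \in \mathcal{O}(O_\tau) = F[\tau^\perp \cap M_{\Delta_g}]$, so $Y$ does not contain $O_\tau$, and covering $X$ by the $U_\tau$ gives the orbit statement. Since $Y$ is a Cartier divisor it has no embedded primes and every generic point has codimension one in $X$, whereas every boundary orbit $O_\tau$ with $\tau \ne 0$ has strictly smaller dimension; hence no generic point of $Y$ lies in the toric boundary, and $\pi^{-1}(Z^o(g)) = Y \cap T$ is schematically dense in $Y$. For part \ref{it:orbit}, the schematic intersection $Y \cap O_\tau$ is the zero locus of $x^{-m_\tau} g_\delta$ on $O_\tau$. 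A splitting $\tau^\perp \cap M_{\Delta_g} = M_{\Delta_g,\delta} \oplus L$ (available because $M_{\Delta_g,\delta}$ is saturated in $\tau^\perp \cap M_{\Delta_g}$) identifies this intersection with $Z^o(g_\delta) \times_F \Spec F[L]$, which is smooth if and only if $Z^o(g_\delta)$ is. Since every face of $\Delta_g$ arises as $\delta_\tau$ for some cone $\tau \in \Sigma$ (as $\Sigma$ refines $\Sigma_{\Delta_g}$), the equivalence with Newton non-degeneracy follows.

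For part \ref{it:toroidal}, assume $g$ is Newton non-degenerate and fix $p \in Y \cap O_\tau$. Smoothness of $Z^o(g_\delta)$ at the image of $p$ provides a character $\chi \in M_{\Delta_g,\delta}$, which we may take primitive in $M_{\Delta_g}$ (by saturatedness of $M_{\Delta_g,\delta}$), such that the partial derivative of $x^{-m_\tau} g_\delta$ in the $\chi$-direction is nonzero at $p$; because $r_\tau \in I_\tau$ vanishes on $O_\tau$, the same partial of $f_\tau$ is a unit in an étale neighborhood of $p$. Using primitivity to split $M_{\Delta_g} = \Z\chi \oplus M'$ decomposes $U_\tau \cong U'_\tau \times \Spec F[\chi^{\pm 1}]$ with $U'_\tau$ a toric variety of character lattice $M'$. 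The étale implicit function theorem then exhibits a neighborhood of $p$ in $Y$ as an open subscheme étale over $U'_\tau$, producing the required chart. If $X$ is smooth over $F$, each $U_\tau$ is smooth and the same non-vanishing partial at every point of $Y$ gives smoothness of $Y$ by the Jacobian criterion. The main technical obstacle is the verification of the local decomposition $f_\tau = x^{-m_\tau}g_\delta + r_\tau$ and the subsequent choice of a primitive character $\chi$ in part \ref{it:toroidal}; both reduce to standard combinatorial facts about saturated sublattices of $M_{\Delta_g}$ and the geometry of $\Delta_g$'s normal fan.
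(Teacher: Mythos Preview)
Your argument is correct. Parts \eqref{it:dense} and \eqref{it:orbit} follow the same route as the paper, only you make the local equation $f_\tau=x^{-m_\tau}g_\delta+r_\tau$ explicit from the outset, whereas the paper phrases the same computation as ``$Y\cap O(\sigma')$ is the inverse image of $Z^o(g_\delta)$ under the torus fibration $O(\sigma')\to O(\sigma)$''. One small point: the implication ``$Y$ is a Cartier divisor, hence has no embedded primes'' is not valid on an arbitrary scheme; it uses that the toric variety $X$ is Cohen--Macaulay, which is exactly what the paper invokes. You should make that explicit.

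Part \eqref{it:toroidal} is where your approach genuinely diverges. The paper splits the monoid $(\sigma')^\vee\cap M\cong P\oplus M'$ with $P$ sharp, projects $Y\cap U\to\Spec F[P]$, and shows this morphism is smooth by checking flatness (via the slicing criterion in the Stacks Project) and smoothness of the fibre over the torus-fixed point; this yields, locally, an \'etale morphism to $\Spec F[P]\times_F\A^\ell_F$. You instead isolate a single primitive character $\chi\in M_\delta\subset\tau^\perp$ along which the Jacobian of $f_\tau$ does not vanish, split $M_{\Delta_g}=\Z\chi\oplus M'$, and project $Y$ onto $U'_\tau=\Spec F[\tau^\vee\cap M']$; the nonvanishing partial makes $U_\tau\to\A^1_F\times_F U'_\tau$ \'etale at $p$ as a morphism of schemes smooth of relative dimension one over $U'_\tau$, hence $Y\to U'_\tau$ is \'etale at $p$. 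Your route is more elementary in that it avoids the separate flatness verification, at the cost of the bookkeeping needed to confirm that $M_\delta$ is saturated in $M_{\Delta_g}$ (so $\chi$ can be taken primitive) and that the splitting of $M_{\Delta_g}$ restricts to a splitting of $\tau^\vee\cap M_{\Delta_g}$ (which follows from $\chi\in\tau^\perp$). The paper's route packages the geometry more cleanly and makes the toroidal structure of $Y$ transparent, since the target $\Spec F[P]\times_F\A^\ell_F$ exhibits the toric singularity type directly.
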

\begin{proof}
\eqref{it:dense}
 If $\delta$ is a face of $\Delta_g$ and $\sigma$ is the corresponding cone in the normal fan of $\Delta_g$, then the intersection of $Z(g)$ with the torus orbit $O(\sigma)$ is isomorphic to $Z^o(g_{\delta})$; in particular, it is empty or of codimension 1 in $O(\sigma)$.
 Thus $Z(g)$ does not contain any torus orbit in $\mathbb{P}_{F}(\Delta_g)$. Since $\pi$ is a toric morphism and $Y$ is the inverse image of $Z(g)$ in $X$, the analogous property holds for $Y$ and $X$. It follows that $\pi^{-1}(Z^o(g))$ is topologically dense in $Y$.
 Since the toric variety $X$ is Cohen-Macaulay, the same holds for the Cartier divisor $Y$ on $X$. Thus by
  \cite[\href{https://stacks.math.columbia.edu/tag/083P}{Tag 083P}]{stacks-project}, $\pi^{-1}(Z^o(g))$ is also schematically dense in $Y$.

 \eqref{it:orbit} Let $\Sigma$ be the normal fan of $\Delta_g$ and let $\Sigma'$ be the complete  refinement of $\Sigma$
 associated with the proper birational toric morphism $X\to \mathbb{P}_{F}(\Delta_g)$.
 Let $\sigma'$ be a cone in $\Sigma'$ and denote by $\sigma$ the unique minimal cone of $\Sigma$ containing $\sigma'$. Let $\delta$ be the face of $\Delta_g$ dual to $\sigma$.
 Then the intersection of $Z(g)$ with the orbit $O(\sigma)$ is isomorphic to $Z^o(g_{\delta})$,
 and the intersection of $Y$ with $O(\sigma')$ is the inverse image of $Z^o(g)$ under the torus fibration $O(\sigma')\to O(\sigma)$. Thus $Z^o(g_\delta)$ is smooth if and only if
 $Y\cap O(\sigma')$ is smooth.

\eqref{it:toroidal} This result is well-known to experts; it follows from the fact that $Z^o(g)$ is a
sch\"on hypersurface in the torus $\Spec F[M_{\Delta_g}]$, in the sense of Tevelev -- see in particular Theorem 1.4 and the subsequent remark in \cite{tevelev}. If $X$ is smooth, then it was proved already in \cite{Khovanskii} that $Y$ is smooth, as well. For the reader's convenience, we will give a direct argument that covers both the smooth and the singular case.
 In the language of logarithmic geometry, we will show that $Y$ is log smooth when we endow it with the pullback of the canonical log structure on the toric variety $X$. 
 
 Replacing $M$ by $M_{\Delta_g}$ if necessary, we may assume that $\Delta_g$ has maximal dimension in $M_{\R}$, so that $M=M_{\Delta_g}$.
 Let $y$ be a point of $Y$. Let $\Sigma$ be the normal fan of $\Delta_g$ and let $\Sigma'$ be the complete refinement of $\Sigma$
 associated with the proper birational toric morphism $X\to \mathbb{P}_{F}(\Delta_g)$.
    We denote  by $\sigma'$  the unique cone in $\Sigma'$ such that $y$ is contained in the torus orbit $O(\sigma')$.
        We set $U=\Spec F[(\sigma')^{\vee}\cap M]$; this is an affine toric chart of $X$  with $O(\sigma')$ as its minimal orbit.
   We will write $U$ as a product of $O(\sigma')$ with an affine toric variety $V$ and then prove that the projection $U\to V$ restricts to a morphism $Y\cap U\to V$ that is smooth at $y$.
   
   We choose a splitting
  $$(\sigma')^{\vee}\cap M\cong P\oplus M'$$ where $P$ is a finitely generated saturated integral monoid such that $P^{\times}=\{0\}$, and $M'$ is the lattice of invertible elements in
  the monoid $(\sigma')^{\vee}\cap M$.
 This choice induces an isomorphism
$$U\cong \Spec F[P]\times_F \Spec F[M'].$$
 Then $V=\Spec F[P]$ is an affine toric variety, and the fiber of the projection morphism $U\to V$ over the zero-dimensional orbit $o$ of $V$ is precisely the orbit $O(\sigma')$.

 It follows from
 \cite[\href{https://stacks.math.columbia.edu/tag/00MG}{Tag 00MG}]{stacks-project} that
 the morphism $Y\cap U\to Z$ is flat at $y$. The fiber of this morphism over $o$ coincides with $Y\cap O(\sigma')$. In particular, this fiber is smooth, by \eqref{it:orbit}.
 Thus the morphism $Y\cap U\to V$ is smooth at $y$, and, locally around $y$, there exists an \'etale morphism from $Y$ to the toric variety $V\times_F \A^\ell_F$, for some $\ell\geq 0$. If $X$ is smooth then the monoid $P$ is free, so that $V$ is an affine space and $Y$ is smooth at $y$.
\end{proof}

\begin{prop}\label{prop:sbir}
Assume that $F$ has characteristic zero. Let $g$ be a Laurent polynomial in $F[M]$ that is Newton non-degenerate. Then
$$\sbmap (Z(g_{\delta}))=\sbir{Z(g_{\delta})}=\sbir{Z^o(g_{\delta})}$$ for every face $\delta$ of $\Delta_g$.
\end{prop}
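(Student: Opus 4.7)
The plan is to reduce to the smooth projective case via a toric resolution, and then compare Grothendieck ring classes via the torus-orbit stratification. First, observe that $g_\delta$ is itself Newton non-degenerate: its Newton polytope is $\delta$, every face $\delta'$ of $\delta$ is also a face of $\Delta_g$, and $(g_\delta)_{\delta'}=g_{\delta'}$, so $Z^o((g_\delta)_{\delta'})$ is smooth by hypothesis. Choose a smooth refinement $\Sigma'$ of the normal fan $\Sigma$ of $\delta$, and let $\pi\colon X\to \mathbb{P}_F(\delta)$ denote the resulting proper birational toric morphism; then $X$ is smooth and projective. Set $Y=\pi^{-1}(Z(g_\delta))$. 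By Proposition~\ref{prop:nondeg}\eqref{it:toroidal}, $Y$ is smooth over $F$, and it is projective as a closed subscheme of $X$.

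Next I would deduce $\sbir{Y}=\sbir{Z(g_\delta)}=\sbir{Z^o(g_\delta)}$. The morphism $\pi$ restricts to an isomorphism on the dense torus of $\mathbb{P}_F(\delta)$, so $\pi^{-1}(Z^o(g_\delta))\cong Z^o(g_\delta)$, and by Proposition~\ref{prop:nondeg}\eqref{it:dense} this scheme is schematically dense in both $Z(g_\delta)$ and $Y$. Hence the irreducible components of $Y$, $Z(g_\delta)$ and $Z^o(g_\delta)$ are in natural bijection, and each matching triple shares a common dense open, so they represent the same stable birational class. Since $Y$ is smooth and projective, Theorem~\ref{theo:lalu} moreover gives $\sbmap(Y)=\sbir{Y}$.

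It remains to show $\sbmap(Z(g_\delta))=\sbmap(Y)$, that is, $[Y]\equiv[Z(g_\delta)]$ in $\Gro(\Var_F)/(\LL)$. I would stratify both schemes by torus orbits. For $\sigma\in\Sigma$ with dual face $\delta_\sigma$ of $\delta$ we have $Z(g_\delta)\cap O(\sigma)=Z^o(g_{\delta_\sigma})$, hence $[Z(g_\delta)]=\sum_{\sigma\in\Sigma}[Z^o(g_{\delta_\sigma})]$. For $\sigma'\in\Sigma'$ whose relative interior lies in that of its minimal containing cone $\sigma=\sigma(\sigma')\in\Sigma$, the orbit morphism $O(\sigma')\to O(\sigma)$ is a trivial $\mathbb{G}_m^{\dim\sigma-\dim\sigma'}$-bundle (the relevant character-lattice quotient is primitive, so the extension of split tori splits), and therefore
$$Y\cap O(\sigma')\;\cong\;Z^o(g_{\delta_\sigma})\times_F \mathbb{G}_{m,F}^{\dim\sigma-\dim\sigma'}.$$
Using $[\mathbb{G}_m]\equiv -1\pmod{\LL}$, the total contribution to $[Y]$ of the cones of $\Sigma'$ refining a fixed $\sigma\in\Sigma$ is
$$[Z^o(g_{\delta_\sigma})]\cdot\sum_{\sigma(\sigma')=\sigma}(-1)^{\dim\sigma-\dim\sigma'},$$
and the inner sum equals $1$ because the cells $\{\mathrm{relint}(\sigma')\,:\,\sigma(\sigma')=\sigma\}$ form a cellular decomposition of the contractible open cone $\mathrm{relint}(\sigma)\cong\R^{\dim\sigma}$ (which has compactly supported Euler characteristic $(-1)^{\dim\sigma}$). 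Matching contributions $\sigma$-by-$\sigma$ yields $[Y]\equiv[Z(g_\delta)]\pmod{\LL}$, and applying $\sbmap$ gives the proposition. The main obstacle is this combinatorial Euler-characteristic identity; the rest is a direct unpacking of Proposition~\ref{prop:nondeg} together with the standard orbit stratification of toric varieties.
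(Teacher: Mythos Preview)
Your argument is correct, and it takes a more explicit route than the paper. The paper reduces, as you do, to the case $\delta=\Delta_g$, then observes via Proposition~\ref{prop:nondeg} that $Z(g)$ has \emph{strictly toroidal singularities} in the sense of \cite[Example~4.2.6(3)]{NiSh}, and invokes the general fact (discussed after Definition~4.2.4 in \cite{NiSh}) that for such schemes every resolution $Y\to Z(g)$ satisfies $[Y]\equiv[Z(g)]\bmod\LL$; Larsen--Lunts then finishes the proof. In other words, the congruence $[Y]\equiv[Z(g_\delta)]\bmod\LL$ is treated as a black box imported from \cite{NiSh}.

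Your approach instead proves this congruence directly and self-containedly for the toric resolution, by stratifying $Y$ and $Z(g_\delta)$ along torus orbits and reducing the comparison to the combinatorial identity $\sum_{\sigma(\sigma')=\sigma}(-1)^{\dim\sigma-\dim\sigma'}=1$, which follows from the additivity of the compactly supported Euler characteristic over the polyhedral decomposition of $\mathrm{relint}(\sigma)\cong\R^{\dim\sigma}$. The advantage of your route is that it avoids any external reference and makes the toric mechanism fully visible; the paper's route is shorter on the page and applies in the broader context of strictly toroidal singularities, not just pullbacks along toric morphisms. One cosmetic point: you only need $Y$ smooth and \emph{proper} for Theorem~\ref{theo:lalu}, so there is no need to insist on a projective refinement $\Sigma'$ (smooth complete refinements always exist, projective ones do too, but ``proper'' suffices here).
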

\begin{proof}
 The equality
 $\sbir{Z(g_{\delta})}=\sbir{Z^o(g_{\delta})}$ follows immediately from the fact that $Z^o(g_{\delta})$ is dense in
 $Z(g_{\delta})$, by Proposition \ref{prop:nondeg}\eqref{it:dense}.
 Thus it is enough to show that $\sbmap (Z(g_{\delta}))=\sbir{Z(g_{\delta})}$.
It suffices to prove this result for $\delta=\Delta_g$, since it is obvious from the definition that each of the polynomials $g_{\delta}$ is Newton non-degenerate if this holds for $g$.

 By Proposition \ref{prop:nondeg}, the scheme $Z(g)$ has {\em strictly toroidal singularities} in the sense of Example 4.2.6(3) in \cite{NiSh}.
 This implies that we can find a resolution of singularities
 $Y\to Z(g)$ such that $[Y]\equiv [Z(g)]\mod \LL$ in $\Gro(\Var_F)$ (in fact, this then holds for {\em every} resolution of singularities; see the discussion following Definition 4.2.4 in \cite{NiSh}). Since $Y$ is smooth and proper, we conclude by Theorem \ref{theo:lalu} that
 \[\sbir{Z(g)}=\sbir{Y}=\sbmap(Y) = \sbmap(Z(g)).\qedhere\]
\end{proof}

\subsection{Stably irrational polytopes}
Let $\Delta$ be a lattice polytope in $M_{\R}$. For every field $F$, the Laurent polynomials in $F[M]$ with Newton polytope $\Delta$ are parameterized by a dense Zariski open subset $U$ of the $F$-vector space of maps $\Delta\cap M\to F$. This open subset is defined by the condition that the images of the vertices of $\Delta$ are different from zero. We say that a property holds for every very general Laurent polynomial in $F[M]$ with Newton polytope $\Delta$ if it holds for all Laurent polynomials parameterized by a countable intersection of non-empty Zariski open subsets of $U$ (note that this is an empty statement when the field $F$ is countable).

\begin{defi}\label{defi:stablyratpol}
We say that $\Delta$ is {\em stably irrational} if $\Delta$ has dimension at least $2$ and, for every algebraically closed field $F$ of characteristic zero, and for every very general polynomial $g$ in $F[M]$ with Newton polytope $\Delta$, the hypersurface $Z^o(g)$ in $\Spec F[M]$ is not stably rational. Otherwise, we say that $\Delta$ is {\em stably rational}.
\end{defi}

 In particular, zero-dimensional and one-dimensional polytopes are stably rational by definition. In these cases,
 $Z^o(g)$ is empty or a finite set of points, respectively. If $\Delta$ has dimension at least $2$,  and $g$ is as in the definition, then $Z^o(g)$ is integral by Bertini's theorem. Thus, in all dimensions, we conclude that $\Delta$ is stably rational if and only if for every $g$ as in the definition, all connected components of $Z^o(g)$ are stably rational.
  Note that the stable rationality of $\Delta$ does not depend on the embedding of $\Delta$ in $M_{\R}$:
the isomorphism class of the hypersurface $Z^o(g)$ is invariant under unimodular affine transformations of $M$.

\begin{exam}\label{exam:hypersurf}
For every positive integer $d$, we denote by $d\Delta_{n+1}$ the dilatation with factor $d$ of the unimodular $(n+1)$-dimensional simplex:
$$d\Delta_{n+1}=\left\{u\in \R^{n+1}_{\geq 0}\,\,\big|\,\,\sum_{i=1}^{n+1} u_i\leq d\right\}.$$
 Then for every infinite field $F$, the hypersurfaces $Z(g)$ in $\mathbb{P}_F(d\Delta_{n+1})=\mathbb{P}^{n+1}_F$ defined by polynomials $g$ with Newton polytope $d\Delta_{n+1}$ form a dense open subset of the space of degree $d$ hypersurfaces in $\mathbb{P}^{n+1}_F$. Thus $d\Delta_{n+1}$ is stably irrational if and only if $n\geq 1$ and, over every algebraically closed field $F$ of characteristic zero, a very general degree $d$ hypersurface in $\mathbb{P}^{n+1}_F$ is not stably rational.
\end{exam}

\begin{exam}\label{exam:bideg2}
The lattice polytope $\Delta=2\Delta_2\times 2\Delta_3$ in $\R^{5}$ is stably irrational.
 Indeed, for every infinite field $F$, the hypersurfaces $Z(g)$ in $\mathbb{P}_F(\Delta)=\mathbb{P}^{2}_F\times_F \mathbb{P}^{3}_F$ defined by polynomials $g$ with Newton polytope $\Delta$ form a dense open subset of the space of bidegree $(2,2)$ hypersurfaces in $\mathbb{P}^{2}_F\times_F \mathbb{P}^{3}_F$ defined over $F$. When $F$ has characteristic $0$, a very general member of this family is stably irrational by \cite{HPTbideg2} (see in particular the summary of results in \S7).
\end{exam}

\begin{exam}\label{exam:prim}
Assume that $\Delta$ has lattice width $1$ in the lattice $M_{\Delta}$. Then $\Delta$ is stably rational.
  Indeed, replacing $M$ by $M_{\Delta}$, we may assume that $\Delta$ has full dimension $n+1$. Our assumption on the lattice width means that we can find a primitive vector $\ell$ in the dual lattice $M^{\vee}$ such that $\ell(\Delta\cap M)=\{a,a+1\}$ for some integer $a$.
Translating $\Delta$ in $M_{\R}$ if needed, we may assume that $a=0$.
 Choosing a basis of $M^{\vee}$ that contains $\ell$, we see that for every field $F$
and every polynomial $g$ in $F[M]$ with Newton polytope $\Delta$, the hypersurface $Z^o(g)$ is isomorphic to a hypersurface in $\mathbb{G}_{m,F}^{n+1}$ defined by an equation that is linear in one of the variables. It follows that $Z^o(g)$ is rational.
\end{exam}

\begin{exam}\label{hptpolytope}
Consider the Hassett--Pirutka--Tschinkel quartic 
\begin{equation}\label{hptquartic}
g=x_1x_2x_3^2+x_1 x_4^2+x_2 x_5^2+x_1^2+x_2^2-2x_1x_2-2x_1-2x_2+1
\end{equation}The Newton polytope $\Delta$ of $g$ is the convex hull of the six points in $\RR^5$
\begin{center}
\begin{tabular}{ccc}
(0, 0, 0, 0, 0), & (2, 0, 0, 0, 0), & (0, 2, 0, 0, 0),\\ (1, 1, 2, 0, 0), & (1, 0, 0, 2, 0), & (0, 1, 0,
       0, 2).
\end{tabular}\end{center}
It follows from \cite{HPTbideg2} and \cite{schreieder} that this hypersurface is stably irrational over any algebraically closed field $F$ of characteristic $0$. Applying Proposition 3.1 in \cite{schreieder}, it also follows that a very general hypersurface over $F$ with Newton polytope $\Delta$ does not admit a decomposition of the diagonal. In particular, the polytope $\Delta$ is stably irrational.
\end{exam}
 In order to confirm that a given lattice polytope is stably irrational using Definition \ref{defi:stablyratpol}, we {\em a priori} need to test infinitely many hypersurfaces over infinitely many base fields. The following proposition guarantees that it suffices to test one sufficiently general hypersurface over one algebraically closed base field.

\begin{prop}\label{prop:stabyratpol}
Let $F_0$ be a field of characteristic $0$, and let
 $W$ be a reduced $F_0$-scheme of finite type. Suppose that there exist an algebraically closed extension $F$ of $F_0$ and a  Newton non-degenerate polynomial $g$ in $F[M]$ with Newton polytope $\Delta$ such that $Z^o(g)$ is not stably birational to $W\times_{F_0} F$. Then for {\em every} algebraically closed extension $F'$ of $F_0$ and every very general polynomial $h$ in $F'[M]$ with Newton polytope $\Delta$, the hypersurface $Z^o(h)$ is not stably birational to $W\times_{F_0}F'$.

In particular, if $\dim(\Delta)\geq 2$, then the polytope $\Delta$ is stably irrational if and only if there exist an algebraically closed field $F$ of characteristic zero and a polynomial $g$ in $F[M]$ with Newton polytope $\Delta$ such that $g$ is Newton non-degenerate and $Z^o(g)$ is not stably rational.

If $\mathbb{P}_{\Q}(\Delta)$ is smooth, then the same results hold if we only suppose that $Z(g)$ is smooth, instead of assuming that $g$ is Newton non-degenerate.
\end{prop}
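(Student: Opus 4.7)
The plan is to realize Newton non-degenerate hypersurfaces with Newton polytope $\Delta$ as fibers of a single smooth proper family over a geometrically irreducible $F_0$-base, and then invoke Corollary \ref{coro:verygen}. Let $U_{F_0}$ be the dense open subscheme of the affine $F_0$-space of maps $\Delta\cap M\to \A^1_{F_0}$ cut out by the non-vanishing of the vertex coefficients, and let $U^{\nd}_{F_0}\subseteq U_{F_0}$ be the open subscheme whose geometric points correspond to Newton non-degenerate polynomials. This is indeed an open subscheme over $F_0$, because non-degeneracy is a finite collection of smoothness conditions on torus hypersurfaces attached to the faces of $\Delta$. The hypothesis furnishes an $F$-point of $U^{\nd}_{F_0}\times_{F_0}F$, so $U^{\nd}_{F_0}$ is non-empty; as an open subscheme of the geometrically irreducible $\mathbb{A}^{N}_{F_0}$, it is geometrically irreducible.

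Next I would construct the universal smooth proper family. Pick a proper birational toric morphism $\pi\colon X\to \mathbb{P}_{F_0}(\Delta)$ with $X$ smooth, coming from a smooth refinement of the normal fan of $\Delta$. Let $\mathcal{Z}\subseteq U^{\nd}_{F_0}\times_{F_0}\mathbb{P}_{F_0}(\Delta)$ be the universal hypersurface and set $\mathcal{Y}=(\id\times \pi)^{-1}(\mathcal{Z})$. For each geometric point $s$ of $U^{\nd}_{F_0}$, the fiber $\mathcal{Y}_s=\pi^{-1}(Z(g_s))$ is smooth by Proposition \ref{prop:nondeg}\eqref{it:toroidal} and birational to $Z^o(g_s)$ by Proposition \ref{prop:nondeg}\eqref{it:dense}. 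Since $\mathcal{Y}$ is a Cartier divisor in the smooth scheme $U^{\nd}_{F_0}\times_{F_0}X$ whose geometric fibers all have dimension $\dim X-1$, miracle flatness yields flatness of $\mathcal{Y}\to U^{\nd}_{F_0}$; combined with smooth proper fibers this produces a smooth proper morphism. Replacing $W$ by a smooth proper birational model of a compactification of each of its irreducible components (which preserves $\sbir{W}$), the constant family $W\times_{F_0}U^{\nd}_{F_0}\to U^{\nd}_{F_0}$ is likewise smooth and proper.

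I would then apply Corollary \ref{coro:verygen} to the two families to obtain closed subsets $B_1,B_2,\ldots\subseteq U^{\nd}_{F_0}$ whose union is the locus of $s$ such that $\mathcal{Y}_{\overline{s}}$ is stably birational to $W_{\overline{s}}$. The $F$-point of $U^{\nd}_{F_0}$ corresponding to $g$ lies over a topological point $p_0$ with $\mathcal{Y}_p$ birational to $Z^o(g)$; by hypothesis $Z^o(g)$ is not stably birational to $W\times_{F_0}F$, so $p_0$ belongs to none of the $B_i$, showing each $B_i$ is a \emph{proper} closed subset of $U^{\nd}_{F_0}$. For any algebraically closed extension $F'\supseteq F_0$, the base change $B_i\times_{F_0}F'$ is a proper closed subset of $U^{\nd}_{F_0}\times_{F_0}F'$, and hence $(U^{\nd}_{F_0}\setminus B_i)\times_{F_0}F'$ is a non-empty Zariski open of $U_{F_0}\times_{F_0}F'$. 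A very general $h$ in $F'[M]$ with Newton polytope $\Delta$ corresponds to a point of the countable intersection of these opens; thus $h$ is Newton non-degenerate, $Z^o(h)$ is birational to $\mathcal{Y}_h$, and $\mathcal{Y}_h$ is not stably birational to $W\times_{F_0}F'$. This proves the first claim, and the ``in particular'' assertion follows by specializing to $W=\Spec F_0$.

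The hard part will be verifying that $\mathcal{Y}\to U^{\nd}_{F_0}$ is genuinely smooth and proper---this rests on the existence of a smooth toric resolution of $\mathbb{P}_{F_0}(\Delta)$ together with miracle flatness---and ensuring that the closed subsets produced by Corollary \ref{coro:verygen} remain proper after arbitrary algebraically closed base change, for which the geometric irreducibility of $U^{\nd}_{F_0}$ is essential. For the refinement when $\mathbb{P}_{\Q}(\Delta)$ is smooth, I would simply take $\pi=\id$ and replace $U^{\nd}_{F_0}$ by the larger open subscheme of $U_{F_0}$ over which the universal hypersurface $\mathcal{Z}$ has smooth fibers; the argument then only uses the density part of Proposition \ref{prop:nondeg}\eqref{it:dense} and goes through unchanged.
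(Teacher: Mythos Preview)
Your proposal is correct and follows essentially the same approach as the paper: pass to a smooth toric resolution $\pi\colon X\to \mathbb{P}_{F_0}(\Delta)$, set up the universal family of pulled-back hypersurfaces over an open base defined over $F_0$, and apply Corollary~\ref{coro:verygen} after replacing $W$ by a smooth proper model. The only cosmetic differences are that the paper parameterizes by the projective space $\mathbb{P}H^0(\mathbb{P}_{F_0}(\Delta),\mathcal{L}(\Delta))$ and cuts down to the Bertini-smooth locus $V$, whereas you use the affine coefficient space and the Newton non-degenerate locus $U^{\nd}_{F_0}$ together with Proposition~\ref{prop:nondeg}\eqref{it:toroidal}; both choices work, and your explicit appeal to miracle flatness and geometric irreducibility fills in details the paper leaves implicit.
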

\begin{proof}
 Replacing $M$ by the sublattice $M_{\Delta}$, we may assume that $\Delta$ has full dimension $n+1$.
Assume that we can find $F$ and $g$ as in the statement.
 Let $\pi\colon X\to \mathbb{P}_{F_0}(\Delta)$ be a toric resolution of the projective toric variety $\mathbb{P}_{F_0}(\Delta)$ over $F_0$. If $\mathbb{P}_{F_0}(\Delta)$ is already smooth, we take for $\pi$ the identity morphism.

 Let $\mathcal{L}(\Delta)$ be the canonical polarization on $\mathbb{P}_{F_0}(\Delta)$. Then $\mathcal{L}(\Delta)$ is generated by global sections, so that the same holds for $\pi^*\mathcal{L}(\Delta)$. Since $\mathbb{P}_{F_0}(\Delta)$ is normal, the natural map
  $$H^0(\mathbb{P}_{F_0}(\Delta),\mathcal{L}(\Delta))\to H^0(X,\pi^*\mathcal{L}(\Delta))$$ is an isomorphism.

 Let $F'$ be an algebraically closed extension of $F_0$, and let  $h$ be a Laurent polynomial in $F'[M]$ with Newton polytope $\Delta$. We set $Z'(h)=Z(h)\times_{\mathbb{P}_{F_0}(\Delta)}X$.
  Then $Z^o(h)$ is schematically dense in $Z'(h)$, by Proposition
  \ref{prop:nondeg}\eqref{it:dense}.
 Denote by $\mathcal{L}'$ the pullback of $\pi^*\mathcal{L}(\Delta)$ to $X\times_{F_0}F'$.
  Then $Z'(h)$ is the member of the linear system $|\mathcal{L}'|$ associated with $$h\in H^0(X\times_{F_0}F',\mathcal{L}')\cong H^0(\mathbb{P}_{F_0}(\Delta),\mathcal{L}(\Delta))\otimes_{F_0}F'.$$

 Consider the universal family
$$\psi\colon \cY \hookrightarrow X\times_{F_0}  \mathbb{P}H^0(\mathbb{P}_{F_0}(\Delta),\mathcal{L}(\Delta))$$
of the linear system $|\pi^*\mathcal{L}(\Delta)|$ on $X$.  There is a dense open subset $U$ of $\mathbb{P}H^0(\mathbb{P}_{F_0}(\Delta),\mathcal{L}(\Delta))$ such that the morphism $$\psi^{-1}(\Spec F_0[M]\times_{F_0}U)\to U$$ is the universal family of hypersurfaces in $\Spec F_0[M]$ with Newton polytope $\Delta$.
 By Bertini's theorem, the smooth fibers of the family $$\theta\colon \cY\times_{H^0(\mathbb{P}_{F_0}(\Delta),\mathcal{L}(\Delta))}U\to U$$ are parameterized by a dense open subset $V$ of $U$. By Corollary \ref{coro:verygen}, the locus of geometric fibers of the family
 $\theta^{-1}(V)\to V$ that are not stably birational to $W$ is a countable intersection of open subsets of $V$ (take a smooth and proper birational model $W'$ of $W$ and apply Corollary \ref{coro:verygen} to $\theta^{-1}(V)\to V$  and $W'\times_{F_0} V\to V$ ). This locus is non-empty: by Proposition \ref{prop:nondeg},
 we know that $Z'(g)$ is smooth, so that it corresponds to a geometric fiber of
  $\theta^{-1}(V)\to V$ that is not stably birational to $W$.
 It follows that a very general geometric fiber of the universal family
$$\psi^{-1}(\Spec F_0[M]\times_{F_0}U)\to U$$
 is not stably birational to $W$.
\end{proof}

\begin{exam}\label{exam:doublequar}
Let $\Delta$ be the lattice polytope in $\R^{n+1}$ defined by
$$\Delta=\{u\in \R^{n+1}_{\geq 0}\,|\,u_1+\ldots+u_n+2u_{n+1}\leq 4\},$$ and let $F$ be an algebraically closed field of characteristic $0$. Then a general quartic double  $n$-fold in $\mathbb{P}_F(\Delta)=\mathbb{P}_F(1,\ldots,1,2)$
 has Newton polytope $\Delta$, and it is Newton non-degenerate.
 A very general quartic double $n$-fold is stably irrational
 for $n=3$ \cite{voisin} and $n=4$ \cite{HPTdouble}. Thus $\Delta$ is stably irrational for $n\in \{3,4\}$. The $n=3$ case also follows from the classical nodal Artin-Mumford example \cite{Artin-Mumford} and Theorem \ref{theo:vol}; see Example 4.3.2 in \cite{NiSh}.
\end{exam}

\subsection{Newton subdivisions}\label{ss:subdiv}
 We now consider polynomials over the Puiseux series field $K$.
Let $f=\sum_{m\in M}c_m x^m\in K[M]$ be a non-zero Laurent polynomial with coefficients $c_m$ in $K$.
 Let $$\varphi_f \colon \Delta_f\to \R$$ be the lower convex envelope of the function
 $$\Supp(f)\to \Q,\,m\mapsto \ord_t(c_m).$$
  Thus $\varphi_f$ is the largest convex function such that
  $\varphi_f(m)\leq \ord_t(c_m)$ for every $m$ in $\Supp(f)$. Then $\varphi_f$ is piecewise linear,
  and the maximal domains where it is affine define a polyhedral subdivision $\mathscr{P}_f$ of $\Delta_f$, which is called the Newton subdivision of $\Delta_f$. It is easy to see that all the vertices of $\mathscr{P}_f$ lie in $\Supp(f)$, and $\varphi_f(m)=\ord_t(c_m)$ for every such vertex $m$.

 For every face $\delta$ of $\mathscr{P}_f$, we  set
 $$\overline{f}_{\delta}= \sum_{m\in \delta\cap \Supp(f)}\overline{t^{-\varphi_f(m)}c_m}x^{m}\in k[M_\delta]$$
 where $\overline{a}$ denotes the image of an element $a\in R$ under the reduction morphism $R\to k$ that sends $t^q$ to $0$ for all positive rational numbers $q$. Thus $\overline{f}_{\delta}$ is obtained from $f$ by selecting the terms corresponding to lattice points $m\in M\cap \delta$ such that $\varphi_f(m)=\ord_t(c_m)$, and replacing each coefficient $c_m$ by its leading coefficient.
      We say that $f$ is {\em sch{\"o}n} if $Z^o(\overline{f}_{\delta})$ is smooth over $k$, for all faces $\delta$ of $\mathscr{P}_f$.

\begin{prop}\label{prop:schon-nondeg}
   If $f$ is sch{\"o}n, then it is Newton non-degenerate.
\end{prop}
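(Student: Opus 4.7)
The strategy is in two stages. First I would show that sch\"onness passes to each face of $\Delta_f$, which reduces the claim to the statement that a sch\"on Laurent polynomial $f$ already defines a smooth hypersurface $Z^o(f)$ over $K$ (after replacing $M$ by $M_{\Delta_f}$). Then I would prove this smoothness at each $K$-point $x$ of $Z^o(f)$ by rescaling coordinates according to the valuations of $x$: this produces a one-parameter family over $R$ whose special fiber is, on the nose, the initial form $\overline{f}_\delta$ attached to the face $\delta$ of $\mathscr{P}_f$ selected by $x$, and the sch\"onness hypothesis on that fiber lifts to smoothness of the generic fiber by an explicit logarithmic Jacobian computation. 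For the face reduction, fix a face $\delta$ of $\Delta_f$ and check that $\varphi_{f_\delta}=\varphi_f|_\delta$: the inequality $\varphi_f|_\delta\leq \varphi_{f_\delta}$ is automatic, and for the reverse, pick a linear functional $\ell$ on $M_\R$ with $\ell|_\delta=c$ and $\ell>c$ on $\Delta_f\setminus\delta$; any convex combination of points of $\Supp(f)$ landing in $\delta$ only involves points of $\Supp(f)\cap\delta$, so the lower envelope on $\delta$ cannot be improved by allowing values on $\Supp(f)\setminus\delta$. Hence $\mathscr{P}_{f_\delta}$ is exactly the collection of faces of $\mathscr{P}_f$ contained in $\delta$, with $\overline{(f_\delta)}_{\delta'}=\overline{f}_{\delta'}$ for every such $\delta'$, so $f_\delta$ is sch\"on and it suffices to treat the case of $f$ itself.

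Assuming now $M=M_{\Delta_f}$, take a $K$-point $x\in Z^o(f)\subset \Spec K[M]$. Define $w\in\Hom(M,\Q)$ by $w(m)=\ord_t(x^m)$, put $\mu_w=\min_{m\in \Supp(f)}(\ord_t(c_m)+w(m))$, and let $S\subseteq \Supp(f)$ be its set of minimizers. The relation $f(x)=0$ forces $|S|\geq 2$, so $S$ lies in a face $\delta$ of $\mathscr{P}_f$ of positive dimension, and $\ord_t(c_m)=\varphi_f(m)$ for every $m\in S$. Choose a basis $e_1,\dots,e_{n+1}$ of $M$, set $y_i=t^{-w(e_i)}x_i$, so that $y$ is an $R$-point of $\Spec R[M]$ with reduction $\bar y\in (k^\times)^{n+1}$, and form
\[
g(y)\;:=\;t^{-\mu_w}\,f(t^{w}y)\;=\;\sum_{m\in \Supp(f)} c_m\,t^{w(m)-\mu_w}\,y^m\;\in\;R[M].
\]
For $m\in S$ the coefficient $c_m\,t^{w(m)-\mu_w}$ has $t$-order $0$ and leading coefficient equal to that of $t^{-\varphi_f(m)}c_m$, so $\bar g=\overline{f}_\delta$ in $k[M]$; in particular $\bar y$ is a $k$-point of $Z^o(\overline{f}_\delta)$. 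Sch\"onness then gives an index $i$ with $(\bar y_i\,\partial_{y_i}\overline{f}_\delta)(\bar y)\neq 0$ in $k$. The corresponding logarithmic partial
\[
(y_i\,\partial_{y_i}g)(y)\;=\;\sum_{m\in \Supp(f)} m_i\,c_m\,t^{w(m)-\mu_w}\,y^m\;\in\;R
\]
reduces to this nonzero element, so it is a unit in $R$ and in particular nonzero in $K$. Thus $Z^o(g)$ is smooth over $K$ at $y$, and since $x=t^w y$ comes from a torus automorphism of $\Spec K[M]$ defined over $K$, this gives smoothness of $Z^o(f)$ at $x$.

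The proof is essentially a careful unpacking of definitions once the tropical-to-torus dictionary is in place, so the main difficulties are bookkeeping. The combinatorial identity $\varphi_{f_\delta}=\varphi_f|_\delta$ in the face reduction is clean and deserves to be isolated. The subtler step is the identification $\bar g=\overline{f}_\delta$: one must track the rescaling $t^{-\mu_w}$ against the normalization $t^{-\varphi_f(m)}$ in the definition of $\overline{f}_\delta$ and observe that they match precisely on $S$, where $\varphi_f(m)=\ord_t(c_m)$ and $\varphi_f(m)+w(m)=\mu_w$; this is what prevents the appearance of a stray monomial factor. The non-Noetherian nature of $R$ causes no trouble, since the lifting step only compares Jacobians through the reduction map at an $R$-point of the torus.
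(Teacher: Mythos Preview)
Your proof is correct and takes a genuinely different, more elementary route than the paper's. The paper argues via the machinery of tropical compactifications: it reinterprets sch\"onness as smoothness of all initial degenerations, invokes a result of Helm--Katz to produce a toric $R$-model $\cX$ with smooth surjective multiplication map $\Spec R[M]\times_R \cY\to \cX$, base-changes to $K$, and then identifies the resulting sch\"onness in Tevelev's sense with Newton non-degeneracy. Your argument instead unwinds this pointwise: the face-compatibility $\varphi_{f_\delta}=\varphi_f|_\delta$ reduces to the full-dimensional case, and then for each $K$-point $x$ the torus rescaling $x=t^{w}y$ exhibits $\overline{f}_\delta$ as the special fiber of an $R$-family containing $Z^o(f)$ (up to a torus automorphism), so a nonvanishing logarithmic partial at $\bar y$ lifts to one at $y$. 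This is precisely the content of ``smooth initial degenerations imply smooth generic fiber,'' done by hand without citing external results; it is self-contained and makes the selection of the face $\delta$ by the tropicalization of $x$ completely explicit. The paper's approach is shorter to state if one is willing to quote the tropical literature, and ties the proposition into the broader framework used in the rest of the paper; yours has the advantage of being elementary. (Amusingly, the paper's source contains, commented out, a sketch of a direct argument along essentially the same lines as yours.)
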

\begin{proof}
 The assertion can easily be deduced from the general theory of tropical compactifications of subvarieties of tori \cite{tevelev, luxton-qu}.  Our definition of ``sch{\"o}n'' can be rephrased as the property that all the initial degenerations of $Z^o(f)$ are smooth over $k$. By Proposition 3.9 in \cite{helm-katz}, this is equivalent to the existence of a $\Spec R[M]$-toric $R$-scheme  $\cX$ such that the schematic closure $\cY$ of $Z^o(f)$ in $\cX$ is proper over $R$ and the multiplication morphism
 $$\Spec R[M]\times_R \cY \to \cX$$ is smooth and surjective. Passing to the generic fiber, we obtain a $\Spec K[M]$-toric $K$-scheme $X=\cX_K$ such that the schematic closure $Y$ of $Z^o(g)$ in $X$ is proper over $K$ and the multiplication morphism
 $$\Spec K[M]\times_K Y \to X$$ is smooth and surjective.
  Thus $Z^o(f)$ is sch{\"o}n in the sense of \cite{tevelev}, which is equivalent to the property that $f$ is Newton non-degenerate -- see the remark following Theorem 1.4 in \cite{tevelev}.
 The equivalence follows from the fact that
    the fibers
 of $$\Spec K[M]\times_K Y \to X$$ over $K$-rational points of $X$ are torus bundles over
 the schemes $Z^o(f_{\tau})$, where $\tau$ runs through the faces of $\Delta_f$.
\end{proof}

\begin{rema}
If $c_m\in R^{\times}$ for all $m$ in $\Supp(f)$, then $\varphi_f=0$ and $\mathscr{P}_f$ is the trivial subdivision of $\Delta_f$. In this case, $f$ is sch{\"o}n if and only if it is Newton non-degenerate.
\end{rema}

\subsection{Viro's patchworking polynomial}\label{ss:viro}
 For the applications we have in mind, it is often convenient to construct a polynomial $f$ from given tropical data by means of some reverse engineering.
 Such an approach was used already around 1980 in Viro's patchworking technique to construct real hypersurfaces with prescribed topological properties \cite{viro, viro-thesis}.
 
 Let $\Delta$ be a lattice polytope in $M_{\R}$. A polyhedral subdivision of $\mathscr{P}$ is called {\em integral} if all its faces are lattice polytopes. It is called {\em regular} if its maximal faces are the domains of linearity of some convex piecewise linear function $\varphi\colon \Delta \to \R$. In that case, we will say that $\varphi$ {\em induces} the subdivision $\mathscr{P}$. Assume that $\mathscr{P}$ is regular and integral. Then we can choose $\varphi$ such that it takes rational values at the lattice points in $\Delta$.  For every lattice point $m$ in $M\cap \Delta$, we choose an element $d_m$ in $k$ such that $d_m\neq 0$ whenever $m$ is a vertex of $\mathscr{P}$. Then, for every face $\delta$ of $\mathscr{P}$, the polynomial $g_\delta=\sum_{m\in M\cap \delta}d_m x^m$ in $k[M]$ has Newton polytope $\delta$.
 If we set $$f=\sum_{m\in M\cap \Delta}t^{\varphi(m)}d_mx^m\in K[M]$$
 then $f$ has Newton polytope $\Delta_f=\Delta$ and Newton subdivision $\mathscr{P}_f=\mathscr{P}$, and $\overline{f}_\delta=g_{\delta}$ for every face $\delta$ of $\mathscr{P}$.
   If we choose the coefficients $d_m$ in such a way that $Z^o(g_{\delta})$ is smooth for all $\delta$, then $f$ is sch{\"o}n. This is always the case for a general choice of coefficients $d_m$, by Bertini's theorem.

  \subsection{The stable birational volume}
 The following theorem provides an efficient way to compute the stable birational volume of a general hypersurface with fixed Newton polytope.

\begin{theo}\label{theo:nondeg}
Let $f\in K[M]$ be a Laurent polynomial with coefficients in $K$.
 Assume that $f$ is sch{\"o}n.
Then the stable birational volume of $Z^o(f)$ satisfies
$$\Volsb(\sbir{Z^o(f)})=(-1)^{\dim(\Delta_f)}\sum_{\delta \nsubseteq  \partial \Delta_f} (-1)^{\dim(\delta)}\sbir{Z^o(\overline{f}_{\delta})}$$
in $\Z[\SB_k]$, where $\delta$ runs through the faces of the Newton subdivision $\mathscr{P}_f$ that are not contained in the boundary of $\Delta_f$.
\end{theo}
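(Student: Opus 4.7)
The plan is to apply formula \eqref{eq:volsb} to an explicit strictly toroidal proper $R$-model of a smooth compactification of $Z^o(f)$ constructed from the Newton subdivision $\mathscr{P}_f$. The regularity of $\mathscr{P}_f$, with defining function $\varphi_f$, produces via the standard Mumford construction a projective $R$-toric scheme $\cX$ whose generic fiber $\cX_K$ is a smooth toric compactification of $\Spec K[M_{\Delta_f}]$ (after refining the fan to a unimodular one if needed) and whose special fiber $\cX_k$ is a reduced union of smooth toric $k$-varieties indexed by the maximal cells of $\mathscr{P}_f$. Let $\cY$ be the schematic closure of $Z^o(f)$ in $\cX$. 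Combining the sch{\"o}n hypothesis with Proposition \ref{prop:nondeg}\eqref{it:toroidal} and the characterization of sch{\"o}n subvarieties in terms of initial degenerations from \cite{helm-katz} (as already invoked in the proof of Proposition \ref{prop:schon-nondeg}), $\cY$ is strictly toroidal, proper over $R$, and its generic fiber $\cY_K$ is smooth and stably birational to $Z^o(f)$.

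Next, I would identify the strata of $\cY_k$ with the schemes $Z^o(\overline{f}_\delta)$. The torus orbits of $\cX$ are indexed by the faces of $\mathscr{P}_f$ together with the proper faces of $\Delta_f$; the orbit $O_\delta$ attached to a face $\delta$ of $\mathscr{P}_f$ lies in $\cX_k$ precisely when $\delta \nsubseteq \partial \Delta_f$, since orbits corresponding to boundary faces are horizontal and meet both fibers, so they contribute no stratum to the special fiber of $\cY$. For an interior face $\delta$ of dimension $d$, the orbit $O_\delta$ is a $d$-dimensional torus with character lattice $M_\delta$, and Proposition \ref{prop:nondeg}\eqref{it:orbit}, applied to $\cY \cap \cX_k$ inside $\cX_k$, identifies the schematic intersection $\cY \cap O_\delta$ with the smooth $(d-1)$-dimensional hypersurface $Z^o(\overline{f}_\delta)$. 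This is the stratum $E_\delta$ of $\cY_k$ attached to $O_\delta$, and its codimension in $\cY_k$ equals $\dim(\Delta_f)-\dim(\delta)$.

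Assembling these ingredients, formula \eqref{eq:volsb} applied to $\cY$ yields
\[
\Volsb(\sbir{Z^o(f)}) \;=\; \sum_{\delta \nsubseteq \partial \Delta_f} (-1)^{\dim(\Delta_f)-\dim(\delta)}\, \sbir{Z^o(\overline{f}_\delta)},
\]
which is the claimed identity after pulling out the factor $(-1)^{\dim(\Delta_f)}$. The main obstacle is the first step: constructing $\cY$ explicitly and verifying that it is strictly toroidal with the stated stratification requires a careful analysis of the Mumford $R$-toric scheme of $\mathscr{P}_f$ and of how the sch{\"o}n hypothesis translates into transversality of $\cY$ with every orbit of $\cX$, singling out the interior faces of $\Delta_f$ as the indexing set of strata of $\cY_k$. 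This combinatorial bookkeeping is precisely what the tropical formulas for the motivic nearby fiber in \cite{NPS} handle at the motivic level in $\Gro(\Var_k)$; applying the Larsen--Lunts realization $\sbmap$ termwise to that motivic identity, and using Proposition \ref{prop:sbir} to rewrite $\sbmap(Z^o(\overline{f}_\delta))$ as $\sbir{Z^o(\overline{f}_\delta)}$ via the commutative diagram relating $\Vol$ and $\Volsb$, delivers the stable birational version stated in the theorem.
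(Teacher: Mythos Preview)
Your Mumford-degeneration approach is the right geometric picture and is indeed the alternative route the paper sketches in Section~\ref{ss:tropdeg}, but as written it has a gap: formula~\eqref{eq:volsb} requires the generic fiber to be smooth, yet without refining the fan $\cY_K=Z(f)\subset\mathbb{P}_K(\Delta_f)$ is only \'etale-locally toric, not smooth in general. You acknowledge the need for a unimodular refinement, but a refinement of the recession fan forces subdivisions of the full fan in $N_\R\times\R_{\geq 0}$, and then your clean bijection between strata of $\cY_k$ and interior faces of $\mathscr{P}_f$ no longer holds on the nose; extra bookkeeping is needed, and the paper's Section~\ref{ss:tropdeg} actually points to the log-smooth formula in \cite[A.3.9]{NiSh} (not \eqref{eq:volsb}) precisely to avoid this issue.

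The more serious gap is in your last paragraph, where you defer to \cite{NPS}. The tropical formula there (the paper's equation~\eqref{eq:motvol}) yields
\[
\Vol(Z(f))\equiv \sum_{\delta}[Z^o(\overline{f}_\delta)] \bmod \LL,
\]
where the sum runs over \emph{all} faces of $\mathscr{P}_f$, and the summands are the \emph{open} hypersurfaces $Z^o(\overline{f}_\delta)$, which are not proper. Proposition~\ref{prop:sbir} does \emph{not} say $\sbmap(Z^o(\overline{f}_\delta))=\sbir{Z^o(\overline{f}_\delta)}$; it says $\sbmap(Z(\overline{f}_\delta))=\sbir{Z^o(\overline{f}_\delta)}$, which applies only to the proper compactifications. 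So you cannot simply push~\eqref{eq:motvol} through $\sbmap$ termwise. The paper closes this gap with the inclusion-exclusion identity~\eqref{eq:inclexcl},
\[
\sum_{\delta}[Z^o(\overline{f}_\delta)]=\sum_{\varepsilon\nsubseteq\partial\Delta_f}(-1)^{n+1-\dim(\varepsilon)}[Z(\overline{f}_\varepsilon)],
\]
whose proof requires a Legendre-dual Euler-characteristic computation showing that the bounded subcomplex of each dual cell $\delta^\vee$ is contractible. Only after this conversion to proper $Z(\overline{f}_\varepsilon)$'s, indexed by \emph{interior} faces, can one legitimately apply $\sbmap$ and Proposition~\ref{prop:sbir}. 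This step is the combinatorial heart of the paper's proof, and your proposal skips it.
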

\begin{proof}
 Replacing $M$ by the lattice $M_{\Delta_f}$, we can reduce to the case where $\Delta_f$ has full dimension $n+1$.
 We will deduce the result from the tropical formula for the motivic volume of $Z(f)$ in
Proposition 3.13 of \cite{NPS}. The formula for the motivic volume can be specialized to a formula for the stable birational volume by applying Larsen and Lunts' morphism $\sbmap$ from Theorem \ref{theo:lalu}, because of the commutative diagram at the end of Section \ref{sec:sbir}.

The formula for the motivic volume in Proposition 3.13 of \cite{NPS} is stated in terms of the {\em tropicalization} of $Z^o(f)$, rather than the Newton polytope of $f$. The connection between these is explained in Proposition 3.1.6 of \cite{MS}: the tropicalization of $Z^o(f)$ is the codimension $1$ skeleton of the polyhedral subdivision $\mathscr{Q}$ of $N_{\R}=\Hom(M,\R)$ dual to $\varphi_f$.
 More precisely, $\mathscr{Q}$ is obtained by taking the fan in $N_{\R}\times \R$ normal to the polytope 
 $$\widetilde{\Delta}=\{(u,v)\in \Delta\times \R\,|\,v\geq \varphi_f(u)\},$$
 slicing it with the hyperplane defined by $v=1$, and projecting to $N_{\R}$.
 For every element $d$ in $\{0,\ldots,n+1\}$, there is a canonical inclusion-reversing bijection $\delta\mapsto \delta^{\vee}$ between the $d$-dimensional faces $\delta$ of $\mathscr{P}_f$ and the $(n+1-d)$-dimensional faces $\delta^{\vee}$ of $\mathscr{Q}$. In particular, the faces of the tropicalization of $Z^o(f)$ correspond to the faces of $\mathscr{P}_f$ of positive dimension.
 In the notation of Proposition 3.13 in  \cite{NPS}, if we set $X=Z^o(f)$ then the $k$-scheme $\mathcal{X}_{k}(\gamma)$ is precisely our $k$-scheme $Z^o(\overline{f}_{\delta})$ when $\gamma=\delta^{\vee}$. Thus the formula in that proposition implies that
\begin{equation}\label{eq:motvol}
\Vol(Z(f))\equiv \sum_{\delta}[Z^o(\overline{f}_{\delta})] \mod \LL
\end{equation} in $\Gro(\Var_k)$, where $\delta$ runs through the faces of $\mathscr{P}_f$ (we do not need to exclude the zero-dimensional faces $\delta$ because, for those faces, we have $Z^o(\overline{f}_{\delta})=\emptyset$).

 We will now use an inclusion-exclusion argument to show that
 \begin{equation}\label{eq:inclexcl}
 \sum_{\delta}[Z^o(\overline{f}_{\delta})]=\sum_{\varepsilon\nsubseteq  \partial \Delta_f}(-1)^{n+1-\dim(\varepsilon)}[Z(\overline{f}_{\varepsilon})]
 \end{equation}
 in $\Gro(\Var_k)$, where $\varepsilon$ runs through the faces of $\mathscr{P}_f$ that are not contained in the boundary of $\Delta_f$.  For every face $\delta$ of $\mathscr{P}_f$, we can write $Z(\overline{f}_{\delta})$ as the disjoint union of the $k$-schemes $Z^o(\overline{f}_{\delta'})$ where $\delta'$ runs through the faces of $\delta$. It follows that
$$\sum_{\varepsilon\nsubseteq  \partial \Delta_f}(-1)^{n+1-\dim(\varepsilon)}[Z(\overline{f}_{\varepsilon})]=\sum_{\varepsilon\nsubseteq  \partial \Delta_f}(-1)^{n+1-\dim(\varepsilon)}\left( \sum_{\varepsilon'\leq \varepsilon}[Z^o(\overline{f}_{\varepsilon'})]\right)$$ in $\Gro(\Var_k)$, where $\varepsilon'$ runs through the faces of $\varepsilon$. Reordering terms, we see that it suffices to show that, for every face $\delta$ of $\mathscr{P}_f$, we have
$$ \sum_{\varepsilon\in A_{\delta}}(-1)^{n+1-\dim(\varepsilon)}=1 $$
where $A_{\delta}$ is the set of faces of $\mathscr{P}_f$ that contain $\delta$ and are not contained in the boundary of $\Delta_f$.
 In terms of the dual polyhedral decomposition $\mathscr{Q}$, this is equivalent to showing that for every face $\delta^{\vee}$ of $\mathscr{Q}$, we have
$$\sum_{\varepsilon^{\vee}\in B_{\delta^{\vee}}}(-1)^{\dim(\varepsilon^{\vee})}=1$$ where $B_{\delta^{\vee}}$ is the set of bounded faces of $\delta^{\vee}$. But this expression is nothing but the Euler characteristic of the union of the bounded faces of $\delta^{\vee}$, which is equal to $1$ because this union is a contractible compact topological space.

The result now follows from equalities \eqref{eq:motvol} and \eqref{eq:inclexcl}, together with the fact that the isomorphism
$$\sbmap \colon \Gro(\Var_k)/(\LL)\to \Z[\SB_k]$$ maps $[Z(f)]$ to $\sbir{Z^o(f)}$
and $[Z(\overline{f}_{\delta})]$ to $\sbir{Z^o(\overline{f}_{\delta})}$, for every face $\delta$ of $\mathscr{P}_f$, by Proposition \ref{prop:sbir}.
\end{proof}

\begin{theo}\label{theo:main}
 Let $n$ be a positive integer.
Let $\Delta$ be a lattice polytope in $\R^{n+1}$ of dimension at least $2$, and let $\mathscr{P}$ be a regular integral  polyhedral subdivision of $\Delta$. For every $m$ in $\Delta\cap \Z^{n+1}$, we choose an element $d_m$ in $k$ such that $d_m\neq 0$ whenever $m$ is a vertex of $\mathscr{P}$. For every face $\delta$ in $\mathscr{P}$, we set
$$g_{\delta}=\sum_{m\in \Z^{n+1}\cap \delta}d_mx^m\in k[\Z^{n+1}].$$

Suppose that, for every face $\delta$ in $\mathscr{P}$, the scheme $Z^o(g_{\delta})$ is smooth over $k$. Assume moreover that
\begin{equation}\label{eq:assum}
\sum_{\delta \nsubseteq \partial \Delta} (-1)^{\dim(\delta)}\sbir{Z(g_{\delta})}\neq (-1)^{n+1}\sbir{\Spec k}
\end{equation}
in $\Z[\SB_k]$, where the sum is taken over all faces $\delta$ of $\mathscr{P}$ that are not contained in the boundary of $\Delta$.
Then the polytope $\Delta$ is stably irrational.
\end{theo}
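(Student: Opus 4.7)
The plan is to use the Viro patchworking recipe of Section \ref{ss:viro} to build an explicit Laurent polynomial $f\in K[M]$ whose stable birational volume is precisely the alternating sum appearing in \eqref{eq:assum}, and then to propagate the resulting stable irrationality of $Z^o(f)$ to every very general hypersurface with Newton polytope $\Delta$ via Proposition \ref{prop:stabyratpol}. By replacing $M=\Z^{n+1}$ with the sublattice $M_\Delta$ if needed, I first reduce to the case where $\Delta$ has full dimension $n+1$; both the hypothesis and the conclusion of the theorem depend only on $\Delta$ as an abstract lattice polytope.

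Since $\mathscr{P}$ is regular and integral, Section \ref{ss:viro} supplies a convex piecewise-linear function $\varphi\colon \Delta\to \R$ inducing $\mathscr{P}$ and taking rational values on $M\cap\Delta$. With the prescribed coefficients $d_m$, set
$$f \;=\; \sum_{m\in M\cap \Delta} t^{\varphi(m)} d_m\, x^m \;\in\; K[M].$$
Then $\Delta_f=\Delta$, $\mathscr{P}_f=\mathscr{P}$, and $\overline{f}_\delta = g_\delta$ for every face $\delta$ of $\mathscr{P}$. The hypothesis that each $Z^o(g_\delta)$ is smooth is exactly the sch\"on-ness of $f$, so Proposition \ref{prop:schon-nondeg} shows that $f$ is Newton non-degenerate.

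Applying Theorem \ref{theo:nondeg} to $f$ and using Proposition \ref{prop:sbir} to identify $\sbir{Z^o(\overline{f}_\delta)}=\sbir{Z(g_\delta)}$ — valid because each restriction $g_\delta$ is itself Newton non-degenerate, since every face of the cell $\delta$ is again a cell of $\mathscr{P}$ — one obtains
$$\Volsb(\sbir{Z^o(f)}) \;=\; (-1)^{n+1}\sum_{\delta\nsubseteq \partial\Delta}(-1)^{\dim \delta}\,\sbir{Z(g_\delta)}.$$
By hypothesis \eqref{eq:assum}, the right-hand side differs from $\sbir{\Spec k}$. Since $\Volsb$ is a ring morphism and $\sbir{\Spec K}$ is its multiplicative unit mapped to $\sbir{\Spec k}$, this forces $\sbir{Z^o(f)}\neq \sbir{\Spec K}$, so $Z^o(f)$ is not stably rational over $K$.

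Finally, as $K$ is algebraically closed of characteristic zero and $f$ is a Newton non-degenerate Laurent polynomial with Newton polytope $\Delta$ whose torus hypersurface is not stably rational, the second assertion of Proposition \ref{prop:stabyratpol} yields the stable irrationality of $\Delta$. The only step that demands genuine vigilance is the sign bookkeeping: the prefactor $(-1)^{\dim \Delta_f}$ produced by Theorem \ref{theo:nondeg} must match the $(-1)^{n+1}$ appearing in the hypothesis, which is exactly what the initial reduction to a full-dimensional $\Delta$ guarantees.
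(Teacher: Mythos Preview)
Your proof is correct and follows essentially the same route as the paper's: construct the patchworking polynomial $f$ from the data $(\mathscr{P},\varphi,d_m)$, verify it is sch\"on and hence Newton non-degenerate via Proposition \ref{prop:schon-nondeg}, compute $\Volsb(\sbir{Z^o(f)})$ with Theorem \ref{theo:nondeg}, and conclude via Proposition \ref{prop:stabyratpol}. The paper's version omits your explicit reduction to full dimension and the appeal to Proposition \ref{prop:sbir} for the identification $\sbir{Z^o(g_\delta)}=\sbir{Z(g_\delta)}$, but these are minor expository refinements rather than a different approach.
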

\begin{proof}
By Proposition \ref{prop:stabyratpol}, it is enough to
     find a Laurent polynomial $f$ in $K[\Z^{n+1}]$ with Newton polytope $\Delta$ such that $f$ is Newton non-degenerate and $Z^o(f)$ is stably irrational.
We choose a convex piecewise linear function $\varphi\colon \Delta \to \R$ that induces the regular subdivision $\mathscr{P}$.
 We set $$f=\sum_{m\in \Z^{n+1}\cap \Delta}t^{\varphi(m)}d_mx^m\in K[\Z^{n+1}].$$
 Then $f$ has Newton polytope $\Delta_f=\Delta$ and {Newton subdivision} $\mathscr{P}_f=\mathscr{P}$, and $\overline{f}_\delta=g_{\delta}$ for every face $\delta$ of $\mathscr{P}$.
   By our assumption that $Z^o(g_{\delta})$ is smooth for all faces $\delta$ of $\mathscr{P}$, the Laurent polynomial $f$ is sch{\"o}n; in particular, it is Newton non-degenerate, by Proposition \ref{prop:schon-nondeg}.
 Now it follows from Theorem \ref{theo:nondeg} and our assumption \eqref{eq:assum} that
 $$\Volsb(\sbir{Z^o(f)})\neq \sbir{\Spec k}$$ in $\Z[\SB_k]$. Thus
 $\sbir{Z^o(f)}\neq \sbir{\Spec K}$ in $\Z[\SB_K]$, which means that $Z^o(f)$ is not stably rational.
\end{proof}

\begin{coro}\label{coro:irratpol}
 Let $n$ be a positive integer.
Let $\Delta$ be a lattice polytope in $\R^{n+1}$, and let $\mathscr{P}$ be a regular integral  polyhedral subdivision of $\Delta$. Assume that $\mathscr{P}$ has a stably irrational face $\delta_0$ that is not contained in the boundary of $\Delta$ and such that all the other faces of $\mathscr{P}$ not contained in the boundary of $\Delta$ are stably rational. Then $\Delta$ is stably irrational.
\end{coro}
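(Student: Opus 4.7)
The plan is to apply Theorem \ref{theo:main}: I need to exhibit coefficients $(d_m)_{m \in \Delta \cap \Z^{n+1}}$ satisfying the smoothness hypothesis and violating \eqref{eq:assum}. Since the notion of stably irrational polytope does not depend on the algebraically closed base field of characteristic zero (Definition \ref{defi:stablyratpol} and Proposition \ref{prop:stabyratpol}), I may freely replace $k$ by an uncountable algebraically closed overfield, so that countable intersections of non-empty Zariski opens in an affine space over $k$ are non-empty.

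The smoothness hypothesis of Theorem \ref{theo:main} cuts out a dense open subset of the coefficient space by a standard Bertini-type argument. For every stably rational face $\delta \neq \delta_0$ of $\mathscr{P}$ with $\dim \delta \geq 2$ and $\delta \nsubseteq \partial \Delta$, Definition \ref{defi:stablyratpol} furnishes a countable intersection of dense opens in the coefficients indexed by $\delta \cap \Z^{n+1}$ on which $Z^o(g_\delta)$ is integral (by Bertini) and stably rational, so that $\sbir{Z^o(g_\delta)} = \sbir{\Spec k}$; pulling back along the projection to the full coefficient space preserves this property. Stable irrationality of $\delta_0$ similarly provides a countable intersection of dense opens on which $Z^o(g_{\delta_0})$ is integral and not stably rational, so that its stable birational class is a basis element of $\Z[\SB_k]$ distinct from $\sbir{\Spec k}$. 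Intersecting these finitely many conditions -- one per interior face of $\mathscr{P}$ -- produces a non-empty set of valid choices for $(d_m)$.

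Fix such a choice. Proposition \ref{prop:sbir}, applied to each $g_\delta$ (which is Newton non-degenerate, since the faces of $\delta$ are themselves cells of $\mathscr{P}$, and the smoothness hypothesis applies to them), yields $\sbir{Z(g_\delta)} = \sbir{Z^o(g_\delta)}$. I then compute the alternating sum in \eqref{eq:assum} face by face: interior vertices contribute $0$, as $Z^o(g_\delta) = \emptyset$; interior edges contribute integer multiples of $\sbir{\Spec k}$, since $Z^o(g_\delta)$ is a finite disjoint union of $k$-rational points of a torus; interior faces of dimension $\geq 2$ different from $\delta_0$ contribute $\pm \sbir{\Spec k}$ by the previous paragraph; and $\delta_0$ contributes $(-1)^{\dim \delta_0}\sbir{Z^o(g_{\delta_0})}$, which is $\pm 1$ times a basis element of $\Z[\SB_k]$ distinct from $\sbir{\Spec k}$.

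The total therefore has the shape $N \cdot \sbir{\Spec k} + (-1)^{\dim \delta_0}\sbir{Z^o(g_{\delta_0})}$ for some $N \in \Z$. The coefficient of the basis element $\sbir{Z^o(g_{\delta_0})}$ is $\pm 1 \neq 0$, so the sum cannot equal $(-1)^{n+1}\sbir{\Spec k}$, and Theorem \ref{theo:main} concludes that $\Delta$ is stably irrational. The main conceptual point I would emphasise is that all contributions from stably rational faces -- regardless of dimension or number of irreducible components -- lie in the $\Z$-line spanned by $\sbir{\Spec k}$, whereas $\delta_0$ contributes a genuinely independent basis element; no cancellation between these two populations is possible, which is what allows a single stably irrational face to force stable irrationality of $\Delta$.
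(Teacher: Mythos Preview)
Your proof is correct and follows essentially the same approach as the paper's: choose very general coefficients $(d_m)$, observe that all stably rational interior faces contribute integer multiples of $\sbir{\Spec k}$ while $\delta_0$ contributes a distinct basis element, and conclude via Theorem~\ref{theo:main}. You are more explicit than the paper about a few technical points---passing to an uncountable field to ensure very general tuples exist, separating out the low-dimensional faces, and invoking Proposition~\ref{prop:sbir} to pass between $\sbir{Z(g_\delta)}$ and $\sbir{Z^o(g_\delta)}$---but the underlying argument is the same.
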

\begin{proof}
We choose a very general tuple $(d_m)_m$ of elements in $k$ indexed by the lattice points $m$ in $\Delta$, and we define the polynomials $g_\delta$ as in the statement of Theorem \ref{theo:main}. Then 
$Z^o(g_\delta)$ is smooth over $k$ for every face $\delta$ of $\mathscr{P}$. In the left hand side of equation \eqref{eq:assum}, the term corresponding to $\delta_0$ is the only term in the sum that is not a multiple of $\sbir{\Spec k}$, because all the other faces that meet the relative interior of $\Delta$ are stably rational. Thus the inequation \eqref{eq:assum} is satisfied, and $\Delta$ is stably irrational.
\end{proof}
In order to prove that a given polytope is stably irrational, one is therefore led to search for suitable subdivisions of it containing a smaller polytope which is already known to be stably irrational. Note that the subdivision is required to be regular; in general, checking whether a given subdivision is regular can be difficult. For most of the applications in this paper, we will specify the subdivision by writing down the piecewise linear function $\varphi\colon \Delta \to \R$ directly.

\subsection{Tropical degenerations}\label{ss:tropdeg}
This section is not strictly necessary for the applications in this article, but it provides a geometric explanation for the results we obtained in Theorems \ref{theo:nondeg} and \ref{theo:main}.
We will show how the data of Theorem \ref{theo:main} give rise to an explicit degeneration of a Newton non-degenerate hypersurface with Newton polytope $\Delta$ into pieces that are described by the hypersurfaces $Z^o(g_{\delta})$.

 Let $f=\sum_{m\in M}c_m x^m\in K[M]$ be a Laurent polynomial with coefficients $c_m$ in $K$. Denote by $\Delta_f$ the Newton polytope of $f$, and by $\mathscr{P}_f$ its Newton subdivision.
 Let $$\varphi_f\colon \Delta_f\to \R$$ be the lower convex envelope of the function
 $$\Supp(f)\to \Q,\,m\mapsto \ord_t(c_m).$$
Assume that $f$ is sch{\"o}n.
 Then we can construct a degeneration of $Z(f)$ by considering the {\em Mumford degeneration} of the projective variety $\mathbb{P}_K(\Delta_f)$ induced by the data $(\mathscr{P}_f,\varphi_f)$ (see for instance Example 3.6 in \cite{gross-tropical}), and by taking the schematic closure of $Z(f)$ inside the Mumford degeneration.

Let $N=\Hom(M,\Z)$ be the dual lattice of $M$. 
Let $\widetilde{\Delta}$ be the polyhedron in $M_{\R}\times \R$ defined by
$$\widetilde{\Delta}=\{(u,v)\in \Delta\times \R\,|\,v\geq \varphi_f(u)\}.$$
We choose a positive integer $e$ such that $\varphi_f(u)$ lies in $(1/e)\Z$ for every vertex $u$ of $\mathscr{P}_f$, and such that the slopes of $\varphi_f$ lie in $(1/e)N$.
 Then $\widetilde{\Delta}$ is a lattice polyhedron with respect to the lattice $\Z^{n+1}\times (1/e)\Z$, and its asymptotic cone is given by $\{0\}\times \R_{\geq 0}$. Thus $\widetilde{\Delta}$ defines a toric variety $\mathbb{P}_{k}(\widetilde{\Delta})$ with a projective morphism to $\A^1_k=\Spec k[t^{1/e}]$. We set $\cX=\mathbb{P}_{k}(\widetilde{\Delta})\times_{k[t^{1/e}]}R$; this definition does not depend on the choice of $e$. The generic fiber $\cX_K$ is precisely $\mathbb{P}_{K}(\Delta_f)$, and the special fiber $\cX_k$ is a union of the toric varieties $\mathbb{P}_k(\delta)$ where $\delta$ runs through the faces of maximal dimension in $\mathscr{P}_f$. These toric varieties intersect along toric strata according to the combinatorial structure of the subdivision $\mathscr{P}_f$: the closed strata in $\cX_k$ correspond canonically to the faces $\delta$ in $\mathscr{P}_f$, and this correspondence is inclusion-preserving.


 Since $Z(f)$ is a closed subscheme of $\cX_K=\mathbb{P}_K(\Delta_f)$, we can consider its schematic closure in $\cX$, which we denote by $\cY$. The schematic intersection of $\cY$ with the open stratum in $\cX_k$ corresponding to a face $\delta$ in $\mathscr{P}_f$ is precisely $Z^o(f_{\delta})$. Thus if $f$ is sch{\"o}n, all these intersections are smooth.
  It follows that, if we endow $\cY$ with the log structure induced by the toric boundary of $\cX$ (that is, the union of the special fiber $\cX_k$ with the closure of the toric boundary of $\cX_K=\mathbb{P}_K(\Delta_f)$) and we endow $\Spec R$ with its standard log structure (induced by the closed point), then the morphism $\cY\to \Spec R$ is log smooth. One can now deduce Theorem \ref{theo:nondeg} from the formula for the motivic volume in terms of log smooth models in Theorem A.3.9 of \cite{NiSh}. We omit the details of the argument, as we will not use these facts in this paper.

Let us illustrate this geometric picture by means of an example that will appear again in Corollary \ref{coro:hyperplane} and Theorem \ref{theo:slope}.

\begin{exam}\label{exam:linear}
Let $n$ and $d$ be positive integers and assume that $d\geq 2$. Let $f_0$ be a general homogeneous polynomial of degree $d$ in $k[z_1,\ldots,z_{n+1}]$, and let $f_1$ be a general homogeneous polynomial of degree $d-1$ in $k[z_0,\ldots,z_{n+1}]$. We set $f=tf_0+z_0f_1$;
 this is a homogeneous polynomial of degree $d$ in $K[z_0,\ldots,z_{n+1}]$.
   The Newton polytope $\Delta_f$ is the dilatation with factor $d$ of the unimodular $(n+1)$-dimensional simplex:
 $$\Delta_f=\{(u_0,\ldots,u_{n+1})\in \R^{n+2}_{\geq 0}\,|\,u_0+\ldots+u_{n+1}=d\}.$$
 The function $\varphi_f$ is given by
$\varphi_f=\max \{0,1-u_0\}$.
 The Newton subdivision $\mathscr{P}_f$ of $\Delta_f$ has two maximal cells, namely,
 $$\begin{array}{lll}
\delta_{\leq}&=& \{(u_0,\ldots,u_{n+1})\in \Delta_f\,|\,u_0\leq 1\},
\\[1.5ex] \delta_{\geq}&=& \{(u_0,\ldots,u_{n+1})\in \Delta_f\,|\,u_0\geq 1\}.
  \end{array}$$
  They intersect along the codimension 1 face
  $$\begin{array}{lll}\delta_{=}&=&\{(u_0,\ldots,u_{n+1})\in \Delta_f\,|\,u_0= 1\}.\end{array}$$
A picture of this Newton subdivision in the case $n=2$ and $d=4$ is given in Figure \ref{fig:linear}.
 Since we chose $f_0$ and $f_1$ to be general, the polynomial $f$ is sch{\"o}n.


 \tdplotsetmaincoords{79}{73}
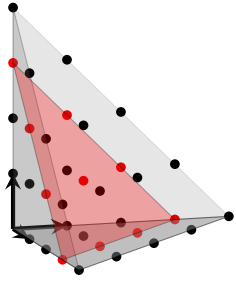
\begin{figure}
  \begin{tikzpicture}[%
    tdplot_main_coords,
    scale=0.75,
    >=stealth
  ]   
\path (0,0,1) node[circle, fill, inner sep=1.3]{};
\path (0,0,2) node[circle, fill, inner sep=1.3]{};
\path (0,0,3) node[circle, fill,color=red, inner sep=1.3]{};
\path (0,0,4) node[circle, fill, inner sep=1.3]{};

\path (0,1,0) node[circle, fill, inner sep=1.3]{};
\path (0,1,1) node[circle, fill, inner sep=1.3]{};
\path (0,1,2) node[circle, fill,color=red, inner sep=1.3]{};
\path (0,1,3) node[circle, fill, inner sep=1.3]{};

\path (0,2,0) node[circle, fill, inner sep=1.3]{};
\path (0,2,1) node[circle, fill,color=red, inner sep=1.3]{};
\path (0,2,2) node[circle, fill, inner sep=1.3]{};

\path (0,3,0) node[circle, fill,color=red, inner sep=1.3]{};
\path (0,3,1) node[circle, fill, inner sep=1.3]{};

\path (0,4,0) node[circle, fill, inner sep=1.3]{};

\path (1,0,0) node[circle, fill, inner sep=1.3]{};
\path (1,0,1) node[circle, fill, inner sep=1.3]{};
\path (1,0,2) node[circle, fill, color=red,inner sep=1.3]{};
\path (1,0,3) node[circle, fill, inner sep=1.3]{};

\path (1,1,0) node[circle, fill, inner sep=1.3]{};
\path (1,1,1) node[circle, fill,color=red, inner sep=1.3]{};
\path (1,1,2) node[circle, fill, inner sep=1.3]{};

\path (1,2,0) node[circle, fill, color=red,inner sep=1.3]{};
\path (1,2,1) node[circle, fill, inner sep=1.3]{};

\path (1,3,0) node[circle, fill, inner sep=1.3]{};

\path (2,0,0) node[circle, fill, inner sep=1.3]{};
\path (2,0,1) node[circle, fill, color=red,inner sep=1.3]{};
\path (2,0,2) node[circle, fill, inner sep=1.3]{};

\path (2,1,0) node[circle, fill, color=red,inner sep=1.3]{};
\path (2,1,1) node[circle, fill, inner sep=1.3]{};

\path (2,2,0) node[circle, fill, inner sep=1.3]{};

\path (3,0,0) node[circle, fill,color=red, inner sep=1.3]{};
\path (3,0,1) node[circle, fill, inner sep=1.3]{};

\path (3,1,0) node[circle, fill, inner sep=1.3]{};

\path (4,0,0) node[circle, fill, inner sep=1.3]{};

    \draw[ultra thick,->] (0,0,0) -- (1,0,0);
    \draw[ultra thick,->] (0,0,0) -- (0,1,0);
    \draw[ultra thick,->] (0,0,0) -- (0,0,1);    
    \draw[fill=gray,opacity=0.5] (0,0,0) -- (0,4,0) -- (4,0,0) -- cycle;
    \draw[fill=gray,opacity=0.25] (0,0,0) -- (0,0,4) -- (4,0,0) -- cycle;
    \draw[fill=black,opacity=0.1] (0,0,0) -- (0,4,0) -- (0,0,4) -- cycle;
    \draw[fill=red,opacity=0.3] (3,0,0) -- (0,3,0) -- (0,0,3) -- cycle;      
  \end{tikzpicture}   
  \caption{Degenerating a quartic surface into a union of a cubic surface and a rational surface, intersecting along a cubic curve.}
\label{fig:linear}
\end{figure}

 The toric $k[t]$-scheme $\mathbb{P}_k(\widetilde{\Delta})$ defined by $(\Delta_f,\varphi_f)$ is the blow-up of
 $$\mathbb{P}^{n+1}_{k[t]}=\mathrm{Proj}\, k[t][z_0,\ldots,z_{n+1}]$$ along the hyperplane $H$ defined by $z_0=0$ in
 the special fiber $\mathbb{P}^{n+1}_k$. The toric $R$-scheme $\cX$ is then given by
 $$\cX=\mathbb{P}_k(\widetilde{\Delta})\times_{k[t]}R.$$
  We write $\cX_k=D_1+D_2$ where $D_1\cong \mathbb{P}^{n+1}_k$ is the strict transform of $\mathbb{P}^{n+1}_k$ and $D_2$ is the exceptional divisor of the blow-up; thus $D_2$ is the projective bundle
  $\mathbb{P}(\mathcal{O}_H\oplus \mathcal{O}_H(1))$ over
  $H$, and it is isomorphic to the toric variety $\mathbb{P}_k(\delta_{\leq})$.
 The scheme $Z(f)$ is the hypersurface in $\mathbb{P}^{n+1}_K$ defined by $f$, and $\cY$ is its schematic closure in $\cX$. The scheme $\cY$ is a strictly semi-stable proper $R$-model for $Z(f)$. Its special fiber consists of two irreducible components, $E_1=\cY\cap D_1$ and $E_2=\cY\cap D_2$. We can explicitly describe the closed strata of the special fiber $\cY_k$ in the following way.
 \begin{itemize}
\item The component $E_1$ is the degree $(d-1)$ hypersurface in $\mathbb{P}^{n+1}_k$ defined by $f_1=0$; it is isomorphic to $Z(\overline{f}_{\delta_{\geq}})$.
\item Let $H^o$ be the open subscheme of $H$ defined by $z_{n+1}\neq 0$. We fix a trivialization of the line bundle $\mathcal{O}_{H^o}(1)$. Then $E_2$ is the schematic closure in $D_2$ of the closed subscheme of $$\mathbb{P}(\mathcal{O}_{H^o}\oplus \mathcal{O}_{H^o}(1))\cong H^o\times_k\mathrm{Proj}\,k[w_0,w_1]$$ defined by
 $$w_0f_0\left(\frac{z_1}{z_{n+1}},\ldots,\frac{z_{n}}{z_{n+1}},1\right)+w_1f_1\left(0,\frac{z_1}{z_{n+1}},\ldots,\frac{z_{n}}{z_{n+1}},1\right)=0.$$
    Thus $E_2$ is rational, and isomorphic to $Z(\overline{f}_{\delta_{\leq}})$.
\item  The intersection $E_1\cap E_2$ is the degree $(d-1)$ hypersurface in $H\cong \mathbb{P}^{n}_k$ defined by $f_1(0,z_1,\ldots,z_{n+1})=0$; it is isomorphic to $Z(\overline{f}_{\delta_{=}})$.
\end{itemize}
Now Theorem \ref{theo:nondeg} simply reproduces the formula \eqref{eq:volsb} for $\Volsb(\sbir{Z(f)})$ in terms of the semi-stable model $\cY$.
\end{exam}

\begin{rema}
Note that, in the set-up of Example \ref{exam:linear}, there is a more straightforward degeneration with similar properties.
 Consider the general homogeneous degree $d$ polynomial $g=f_0+z_0f_1$ in $k[z_0,\ldots,z_{n+1}]$ and 
the $R$-scheme 
$$\cY'=\mathrm{Proj}\,R[z_0,\ldots,z_{n+1}]/(tg-z_0f_1).$$
By Example \ref{exam:toroidal}, this scheme is strictly toroidal, but not strictly semi-stable at the points where $t=g=z_0=f_1=0$. The special fiber is the union of the hyperplane in $\mathbb{P}_k^{n+1}$ defined by $z_0=0$ and the degree $d-1$ projective hypersurface defined by $f_1=0$. The degeneration $\cY$ in Example \ref{exam:linear} is the blow-up of $\cY'$ along the non-Cartier divisor defined by $z_0=t=0$.
 Generally speaking, regular subdivisions of Newton polytopes produce degenerations in large ambient spaces, because the process creates an irreducible divisor in the special fiber of the ambient toric scheme for each face of maximal dimension in the subdivision. 
\end{rema}
 \section{Variation of stable birational type}
In order to verify the non-triviality of the tropical obstruction to stable rationality in Theorem \ref{theo:main}, we need to control possible cancellations between the contributions of different faces of the Newton polytope. For this purpose, we will prove that under suitable conditions one can vary the stable birational type of a very general hypersurface with fixed Newton polytope while preserving a chosen boundary stratum associated with a face of the Newton polytope. This result is interesting in its own right; it generalizes theorems on variation of stable birational types in families due to Shinder \cite{shinder} and Schreieder \cite{schreieder-var}, at least in characteristic zero: Schreieder's results are valid in positive characteristic, as well. But its main relevance for us lies in the the calculations of tropical obstructions to stable rationality: we will give a first application in Theorem \ref{theo:slope}, and further applications in Section \ref{sec:app}.

\subsection{Comparison of faces of Newton polytopes}
\begin{theo}\label{theo:variation}
Let $M$ be a lattice of rank $n+1$.
Let $\Delta$ be a lattice polytope in $M_{\R}$, and assume that $\Delta$ is stably irrational.
  \begin{enumerate}
\item \label{it:varyW} Let $W$ be an integral $k$-scheme of finite type.
 Assume that $\Delta$ admits a regular integral polyhedral subdivision $\mathscr{P}$ such that every face of $\mathscr{P}$ not contained in the boundary of $\Delta$ is stably rational. Then, for every very general polynomial $g$ in $k[M]$ with Newton polytope $\Delta$, the hypersurface $Z^o(g)$ is not stably birational to $W$.
\item \label{it:varyface} Let $\delta$ be a lattice polytope contained in the boundary of $\Delta$. Assume that $\Delta$ admits a regular integral polyhedral subdivision $\mathscr{P}$ satisfying the following properties:
     \begin{enumerate}
    \item the polytope $\delta$ is a face of $\mathscr{P}$;
    \item every face  of $\mathscr{P}$ that intersects $\delta$ and is not contained in the boundary of $\Delta$ is stably rational;
    \item every face $\tau$ of $\mathscr{P}$ that is not contained in the boundary of $\Delta$ admits a regular integral polyhedral subdivision $\mathscr{Q}$ such that every face of $\mathscr{Q}$ not contained in the boundary of $\tau$ is stably rational.
    \end{enumerate}
          Then, for every very general polynomial $g$ in $k[M]$ with Newton polytope $\Delta$, the hypersurface $Z^o(g)$ is not stably birational to $Z^o(g_{\delta})$.
 \end{enumerate}
\end{theo}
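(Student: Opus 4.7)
The approach is to combine Corollary \ref{coro:verygen} with Viro patchworking (Section \ref{ss:viro}) and the tropical formula of Theorem \ref{theo:nondeg}. The common pattern is to use Corollary \ref{coro:verygen} to reduce each very-general assertion over $k$ to finding a single Laurent polynomial over the Puiseux field $K$ whose hypersurface is not stably birational to the target, construct this polynomial from $\mathscr{P}$ by Viro patchworking, and then use the structural hypotheses on $\mathscr{P}$ to force the alternating sum provided by Theorem \ref{theo:nondeg} to lie in $\Z\cdot\sbir{\Spec k}$; the stable irrationality of $\Delta$ then produces the contradiction.

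For part (1), let $U$ be the parameter space of Laurent polynomials with Newton polytope $\Delta$ over $k$. Applying Corollary \ref{coro:verygen} to a smooth proper resolution of the universal hypersurface and to a smooth proper birational model of $W$ shows that the locus $L=\{g\in U:Z^o(g)\sim_{\mathrm{sb}} W\}$ is a countable union of closed subsets of $U$, so it suffices to produce one $f \in U(K)$ with $Z^o(f)\not\sim_{\mathrm{sb}} W_K$. Take $f = \sum_m t^{\varphi(m)} d_m x^m$, with $\varphi$ inducing $\mathscr{P}$ and the $d_m\in k$ sufficiently general that $f$ is sch\"on and each $Z^o(\overline{f}_\tau)$ is smooth (and integral when $\dim\tau\geq 2$). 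If $Z^o(f)\sim_{\mathrm{sb}} W_K$, then Example \ref{exam:goodmodel} applied to a constant $R$-model of a smooth proper birational model of $W$ gives $\Volsb(\sbir{W_K})=\sbir{W}$, while Theorem \ref{theo:nondeg} combined with the stable rationality of every face of $\mathscr{P}$ not contained in $\partial\Delta$ gives
\[\Volsb(\sbir{Z^o(f)}) \,=\, (-1)^{\dim\Delta}\!\!\sum_{\tau\not\subseteq \partial\Delta}\!\!(-1)^{\dim\tau}\sbir{Z^o(\overline{f}_\tau)} \,\in\, \Z\cdot\sbir{\Spec k}.\]
Thus $\sbir{W}\in\Z\cdot\sbir{\Spec k}$; since $W$ is integral, $\sbir{W}$ is a basis element of the free abelian group $\Z[\SB_k]$, forcing $W$ to be stably rational. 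But then ``$Z^o(g)\sim_{\mathrm{sb}} W$ for very general $g$'' would mean $Z^o(g)$ stably rational for very general $g$, contradicting the stable irrationality of $\Delta$.

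For part (2), apply Corollary \ref{coro:verygen} analogously to the pair of families $g\mapsto Z^o(g)$, $g\mapsto Z^o(g_\delta)$, reducing again to producing one $f\in U(K)$ with $Z^o(f)\not\sim_{\mathrm{sb}} Z^o(f_\delta)$. By (a), $\delta$ is a face of $\mathscr{P}$, so after subtracting an affine function we may assume $\varphi|_\delta\equiv 0$; then $f_\delta=g_\delta$ is a fixed $k$-polynomial, $Z^o(f_\delta)=Z^o(g_\delta)\times_k K$, and Example \ref{exam:goodmodel} gives $\Volsb(\sbir{Z^o(f_\delta)})=\sbir{Z^o(g_\delta)}$. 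A hypothetical equivalence $Z^o(f)\sim_{\mathrm{sb}} Z^o(f_\delta)$ combined with Theorem \ref{theo:nondeg} gives
\[\sbir{Z^o(g_\delta)} \,=\, (-1)^{\dim\Delta}\!\!\sum_{\tau\not\subseteq \partial\Delta}\!\!(-1)^{\dim\tau}\sbir{Z^o(\overline{f}_\tau)}.\]
Faces $\tau$ meeting $\delta$ contribute to $\Z\cdot\sbir{\Spec k}$ by (b). For each $\tau$ disjoint from $\delta$ and not contained in $\partial\Delta$, the plan is to replace $\varphi$ by a perturbation $\varphi+\varepsilon\varphi'$ with $\varphi'$ realizing the refinements $\mathscr{Q}_\tau$ of (c) on those $\tau$'s and vanishing on all cells meeting $\delta$ as well as on $\delta$ itself, with $\varepsilon>0$ so small that $\varphi+\varepsilon\varphi'$ induces the common refinement $\mathscr{P}'$ of $\mathscr{P}$ with the chosen $\mathscr{Q}_\tau$. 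The resulting Viro polynomial $f'$ is again sch\"on for generic $d_m$, satisfies $f'_\delta=g_\delta$, and Theorem \ref{theo:nondeg} applied with $\mathscr{P}'$ then expresses the relevant volume as an alternating sum in which every cell in the interior of a refined $\tau$ is stably rational by (c) and every cell meeting $\delta$ is still controlled by (b). An inclusion-exclusion of the flavor used in the proof of Theorem \ref{theo:nondeg} then collapses the contribution of each such $\tau$ to $\Z\cdot\sbir{\Spec k}$. Hence $\sbir{Z^o(g_\delta)}\in\Z\cdot\sbir{\Spec k}$, forcing $Z^o(g_\delta)$ to be stably rational, and the stable irrationality of $\Delta$ yields the contradiction as in part (1).

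The main obstacle is the combinatorial management of the refinement in part (2): one must build $\mathscr{P}'$ as a regular integral polyhedral subdivision, arrange that the $\mathscr{Q}_\tau$ glue consistently on shared boundary faces (by choosing them trivial on cells adjacent to the $\tau$'s that one leaves unrefined, or by restricting which $\tau$'s are further refined), and execute the inclusion-exclusion so that the contributions cleanly cancel to multiples of $\sbir{\Spec k}$ without disturbing the faces already controlled by (b). By contrast, part (1) is essentially immediate once Theorem \ref{theo:nondeg} and Corollary \ref{coro:verygen} are in hand.
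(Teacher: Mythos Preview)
Your argument for part~\eqref{it:varyW} is essentially the paper's: Viro patchworking plus Theorem~\ref{theo:nondeg} forces $\Volsb(\sbir{Z^o(f)})\in\Z\cdot\sbir{\Spec k}$, while $\Volsb(\sbir{W_K})=\sbir{W}$, and integrality of $W$ together with stable irrationality of $\Delta$ finishes.

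For part~\eqref{it:varyface}, your approach genuinely diverges from the paper's, and the divergence creates a real gap. The paper does \emph{not} attempt to build a global refinement $\mathscr{P}'$. Instead, having computed
\[
\Volsb(\sbir{Z^o(f)})=(-1)^{\dim\Delta}\sum_{\tau\nsubseteq\partial\Delta}(-1)^{\dim\tau}\sbir{Z^o(\overline{f}_\tau)},
\]
it argues term by term that no $\sbir{Z^o(\overline{f}_\tau)}$ can equal $\sbir{Z^o(\overline{f}_\delta)}$. Faces $\tau$ meeting $\delta$ are stably rational by~(b). For $\tau$ disjoint from $\delta$ and stably irrational, the coefficients $d_m$ on $\tau$ are very general \emph{independently} of the coefficients on $\delta$; one then applies part~\eqref{it:varyW} to the polytope $\tau$ with $W=Z^o(\overline{f}_\delta)$, using hypothesis~(c) to supply the required subdivision $\mathscr{Q}$ of $\tau$. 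This immediately gives $Z^o(\overline{f}_\tau)\not\sim_{\mathrm{sb}}Z^o(\overline{f}_\delta)$, with no gluing needed.

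Your proposed route---perturbing $\varphi$ to realize all the $\mathscr{Q}_\tau$ simultaneously as a single regular integral subdivision $\mathscr{P}'$---faces the obstacle you yourself flag, and it is not a technicality. Hypothesis~(c) hands you the $\mathscr{Q}_\tau$ independently, with no compatibility on shared faces; two interior faces $\tau_1\subset\tau_2$ come with subdivisions $\mathscr{Q}_{\tau_1}$ and $\mathscr{Q}_{\tau_2}|_{\tau_1}$ that have no reason to agree, and neither can be discarded without leaving a stably irrational term in the alternating sum. Your suggested fixes (leave some $\tau$ unrefined, or restrict which are refined) do not resolve this: any unrefined stably irrational $\tau$ still contributes a class outside $\Z\cdot\sbir{\Spec k}$, and you have no control over it. The paper's trick of invoking part~\eqref{it:varyW} face by face is precisely what sidesteps this combinatorial problem.
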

\begin{proof}
\eqref{it:varyW} By Proposition \ref{prop:stabyratpol}, it suffices to construct a polynomial $f$ in $K[M]$ with Newton polytope
$\Delta$ such that $f$ is Newton non-degenerate and $Z^o(f)$ is not stably birational to $W\times_k K$. Let $f$ be a sch{\"o}n patchworking polynomial with Newton polytope $\Delta$ and Newton subdivision $\mathscr{P}$ as constructed in Section \ref{ss:viro}, for a very general choice of coefficients $d_m$ in $k$. Then $Z^o(f)$ is stably irrational by our assumption that $\Delta$ is stably irrational. Using Theorem \ref{theo:vol} to compute the stable birational volume of $Z^o(f)$, we find that
$$\Volsb(\sbir{Z^o(f)})=(-1)^{\dim(\Delta)}\sum_{\delta \nsubseteq  \partial \Delta} (-1)^{\dim(\delta)}\sbir{Z^o(\overline{f}_\delta)}=a\sbir{\Spec k}$$
for some integer $a$, because the faces $\delta$ appearing in the sum are stably rational.
 On the other hand, $\Volsb(\sbir{W\times_k K})=\sbir{W}$: by resolution of singularities, we may assume that $W$ is smooth and proper, and then the result follows from Example \ref{exam:goodmodel}.  If $Z^o(f)$ were stably birational to $W\times_k K$, this would imply that
$a=1$ and $\sbir{\Spec k}=\sbir{W}$ so that $W$ would be stably rational, contradicting the fact that $Z^o(f)$ is stably irrational.

\eqref{it:varyface}
 We may assume that $\delta$ is stably irrational, since otherwise, the result is obvious.
 We choose a toric resolution of singularities $\pi\colon X\to \mathbb{P}_{\Q}(\Delta)$ and we consider
 the universal family
$$\theta\colon \mathscr{Y}\times_{\mathbb{P}H^0(\mathbb{P}_{\Q}(\Delta),\mathcal{L}(\Delta))}U\to U$$ of hypersurfaces in $X$ with Newton polytope $\Delta$, using the notations from the proof of Proposition \ref{prop:stabyratpol} (with $F_0=\Q$).
 For every algebraically closed field $F$ of characteristic $0$,
the points $u$ in $U(F)$ correspond canonically to the polynomials $g$ in $F[M]$ with Newton polytope $\Delta$, up to scaling by a factor in $F^{\times}$. Under this correspondence, the fiber $\theta^{-1}(u)$ is the schematic closure of $Z^o(g)$ in $X\times_{\Q}F$.

 We similarly choose a toric resolution $\pi_{\delta} \colon X_{\delta}\to \mathbb{P}_{\Q}(\delta)$ and consider the universal family
 $$\theta_{\delta}\colon \mathscr{Y}_{\delta}\times_{\mathbb{P}H^0(\mathbb{P}_{\Q}(\delta),\mathcal{L}(\delta))}U_{\delta}\to U_{\delta}$$ of hypersurfaces in $X_{\delta}$ with Newton polytope $\delta$. Let
 $U'$ be the dense open subscheme of $U$ that parameterizes polynomials $g$ such that  $g_{\delta}$ has Newton polytope $\delta$. Then the linear projection
 $$\mathbb{P}H^0(\mathbb{P}_{\Q}(\Delta),\mathcal{L}(\Delta))\dashrightarrow \mathbb{P}H^0(\mathbb{P}_{\Q}(\delta),\mathcal{L}(\delta)),\,g\mapsto g_{\delta}$$
 is defined on $U'$ and maps $U'$ into $U_{\delta}$.

 By Bertini's theorem, there is a dense open subscheme $V$ of $U'$ such that the families
 $$\mathscr{Y}\times_{\mathbb{P}H^0(\mathbb{P}_{\Q}(\Delta),\mathcal{L}(\Delta))}V\to V,\qquad \mathscr{Y}_{\delta}\times_{\mathbb{P}H^0(\mathbb{P}_{\Q}(\delta),\mathcal{L}(\delta))}V\to V$$
 are smooth. By Corollary \ref{coro:verygen}, the locus of stably birational geometric fibers in   these families is a countable union of closed subsets of $V$.
   Thus it suffices to construct one Newton non-degenerate polynomial $g$ in $F[M]$ with Newton polytope $\Delta$, for some algebraically closed field $F$ of characteristic $0$, with the following properties:
   \begin{itemize}\item the polynomial $g_{\delta}$ has Newton polytope $\delta$ and is Newton non-degenerate;
   \item the scheme $Z^o(g)$ is not stably birational to $Z^o(g_\delta)$.
   \end{itemize}

Let $f\in K[M]$ be a patchworking polynomial with Newton polytope $\Delta$ and Newton subdivision $\mathscr{P}$ as constructed in Section \ref{ss:viro}, for a very general choice of coefficients $d_m$ in $k$. Then $f_\delta$ has Newton polytope $\delta$, and $f$ and $f_\delta$ are sch{\"o}n; in particular, they are Newton non-degenerate, by Proposition \ref{prop:schon-nondeg}. Moreover, by our assumption that $\delta$ is stably irrational, we know that $Z^o(\overline{f}_\delta)$ is stably irrational.

  By Theorem \ref{theo:main}, we have
  $$\Volsb(\sbir{Z^o(f_\delta)})=\sbir{Z^o(\overline{f}_\delta)}.$$
Thus is suffices to prove that
 $$\Volsb(\sbir{Z^o(f)})\neq \sbir{Z^o(\overline{f}_\delta)}.$$
  Again using Theorem \ref{theo:main} to compute $\Volsb(\sbir{Z^o(f)})$, we see that
   it is enough to show that
      $Z^o(\overline{f}_\delta)$ is not stably birational to $Z^o(\overline{f}_\tau)$ for any face $\tau$ of $\mathscr{P}$ that meets the relative interior of $\Delta$. This certainly holds when $\tau$ is stably rational; thus,
            by our assumptions, we may suppose that $\tau$ is disjoint from $\delta$ and that $Z^o(\overline{f}_\tau)$ is not stably rational. Since the coefficients of $\overline{f}_\tau$ are very general with respect to those of $\overline{f}_\delta$, it follows from point \eqref{it:varyW} that
            $Z^o(\overline{f}_\tau)$ is not stably birational to $Z^o(\overline{f}_\delta)$.
\end{proof}

\begin{coro}\label{coro:shinder}
Let $d$ be a positive integer. Let $W$ be an integral $k$-scheme of finite type. If a very general degree $d$ hypersurface in $\mathbb{P}^{n+1}_k$ is not stably rational, then  a very general degree $d$ hypersurface in $\mathbb{P}^{n+1}_k$ is not stably birational to $W$.
\end{coro}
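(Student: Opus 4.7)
My plan is to invoke Theorem~\ref{theo:variation}\eqref{it:varyW} with $\Delta = d\Delta_{n+1}$. By Example~\ref{exam:hypersurf}, the hypothesis of the corollary is equivalent to $\Delta$ being stably irrational, which is one of the inputs to the theorem; the other input is a regular integral polyhedral subdivision $\mathscr{P}$ of $\Delta$ with the property that every face of $\mathscr{P}$ not contained in $\partial\Delta$ is stably rational. Once such a $\mathscr{P}$ is exhibited, the conclusion of the theorem translates back through Example~\ref{exam:hypersurf} to the statement of the corollary.

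To produce $\mathscr{P}$ I take the subdivision cut out on $\Delta$ by all the coordinate hyperplanes $u_i=1,\ldots,d-1$ for $i=1,\ldots,n+1$, whose maximal cells are the non-empty intersections $\Delta\cap\prod_{i=1}^{n+1}[a_i,a_i+1]$ over non-negative integer tuples $(a_i)$ with $\sum a_i\leq d-1$. This subdivision is induced by the convex piecewise linear function $\varphi(u)=\sum_{i=1}^{n+1}f(u_i)$ where $f\colon\R_{\geq 0}\to\R$ is any convex piecewise linear function whose slope strictly increases at each positive integer, so $\mathscr{P}$ is regular; the slant facet $\sum u_i=d$ cuts each unit box edge it meets at a lattice point, which makes $\mathscr{P}$ both integral and face-to-face.

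The decisive observation is that every face of $\mathscr{P}$ not contained in $\partial\Delta$ has lattice width one. Indeed, such a face is obtained from a maximal cell by fixing a subset of the coordinates to integer values strictly between $0$ and $d$ (the truncation coming from $\sum u_i=d$ only contributes faces of $\mathscr{P}$ that lie on $\partial\Delta$), and any coordinate that remains free still takes values in an interval of unit length, realising lattice width one upon projection. Example~\ref{exam:prim} then delivers stable rationality of every interior face of $\mathscr{P}$, and Theorem~\ref{theo:variation}\eqref{it:varyW} yields the corollary. The main point requiring care in a full write-up is the verification that $\mathscr{P}$ really is a face-to-face lattice subdivision in the presence of the truncation, for which the lattice-point observation above suffices.
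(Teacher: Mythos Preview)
Your argument is correct and follows the same overall strategy as the paper: apply Theorem~\ref{theo:variation}\eqref{it:varyW} to $\Delta=d\Delta_{n+1}$, which is stably irrational by hypothesis (Example~\ref{exam:hypersurf}), after exhibiting a regular integral subdivision all of whose interior faces are stably rational. The difference lies only in the choice of subdivision. The paper simply invokes the existence of a regular unimodular triangulation of $d\Delta_{n+1}$ and appeals to Example~\ref{exam:prim} for unimodular simplices; you instead slice $\Delta$ by the coordinate hyperplanes $u_i=j$ and argue that every interior face has lattice width one.

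Your verification is essentially sound: each maximal cell lies in a unit box $\prod_i[a_i,a_i+1]$, hence so does every face $\sigma$ of $\mathscr{P}$, and if $\sigma$ has positive dimension then some coordinate $u_j$ is non-constant on $\sigma$; since $\sigma$ is a lattice polytope with $u_j(\sigma)\subset[a_j,a_j+1]$, the functional $u_j$ takes exactly the two values $a_j,a_j+1$ on the lattice points of $\sigma$, and it is primitive on $M_\sigma$ because it hits both. One small imprecision: your sentence ``such a face is obtained from a maximal cell by fixing a subset of the coordinates to integer values strictly between $0$ and $d$'' is not literally a characterisation of interior faces (fixing two coordinates can still force the face onto $\partial\Delta$), but this does not affect the argument, since the lattice-width-one conclusion only needs the containment in a unit box. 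The paper's route via unimodular triangulations is terser, but relies on an external fact; your box subdivision is more explicit and self-contained.
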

\begin{proof}
This is a special case of Theorem \ref{theo:variation}\eqref{it:varyW}, taking $d$ times the standard simplex for the polytope $\Delta$. Then $\Delta$ admits a regular subdivision into unimodular simplices; such simplices are stably rational (see Example \ref{exam:prim}).
\end{proof}

Corollary \ref{coro:shinder} is a refinement of the main result of \cite{shinder}, and
implies Theorem 1.1 of \cite{schreieder-var} over fields of characteristic zero. It was proved in \cite{schreieder-var} that, if we add the assumption that $W$ is smooth and projective and does not admit a decomposition of the diagonal, 
Corollary \ref{coro:shinder} remains valid in positive characteristic, where our methods do not apply.

\begin{coro}\label{coro:hyperplane}
Let $d$ and $n$ be positive integers. Let $H$ be a hyperplane in $\mathbb{P}^{n+1}_k$, and let $X$ be a degree $d$ hypersurface in $\mathbb{P}^{n+1}_k$ that is very general with respect to $H$. If $X$ is stably irrational, then $X$ is not stably birational to $X\cap H$.
\end{coro}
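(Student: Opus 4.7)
The plan is to apply Theorem~\ref{theo:variation}\eqref{it:varyface} to the polytope $\Delta = d\Delta_{n+1}$ of Example~\ref{exam:hypersurf} together with the degeneration $\mathscr{P}$ from Example~\ref{exam:linear}. After a projective change of coordinates on $\mathbb{P}^{n+1}_k$, I may assume $H = \{z_0 = 0\}$; the corresponding face of $\Delta$ (in the affine form with coordinates $u_1, \ldots, u_{n+1}$ and $u_0 := d - \sum_{i=1}^{n+1}u_i$) is $\delta = \{u_0 = 0\} \cap \Delta$. For a defining polynomial $g$ of $X$, the affine hypersurface $Z^o(g)$ is stably birational to $X$ and $Z^o(g_\delta)$ is stably birational to $X \cap H$, so it is enough to prove that $Z^o(g)$ is not stably birational to $Z^o(g_\delta)$.

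I take $\mathscr{P}$ as in Example~\ref{exam:linear}, induced by $\varphi = \max\{0,\, 1-u_0\}$, with maximal cells $\delta_{\leq} = \{u_0 \leq 1\}\cap \Delta$ and $\delta_{\geq} = \{u_0\geq 1\}\cap \Delta$ meeting along $\delta_= = \{u_0 = 1\}\cap\Delta$, and verify the conditions of the theorem. The interior faces of $\mathscr{P}$ (those not contained in $\partial \Delta$) are exactly $\delta_{\leq}$, $\delta_{\geq}$, and $\delta_{=}$. Condition (a) is immediate, as $\delta = \delta_{\leq}\cap\{u_0 = 0\}$ is a facet of $\delta_{\leq}$. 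For (b), among the interior faces only $\delta_{\leq}$ meets $\delta$ (since $\delta_{\geq}$ and $\delta_{=}$ lie in $\{u_0 \geq 1\}$), and $\delta_{\leq}$ has lattice width $1$ in the $u_0$-direction, so it is stably rational by Example~\ref{exam:prim}. For (c), the trivial subdivision works for $\delta_{\leq}$, while $\delta_{\geq}$ and $\delta_{=}$ are lattice-equivalent to the dilated simplices $(d-1)\Delta_{n+1}$ and $(d-1)\Delta_n$; for these I take a regular unimodular triangulation, noting that every face of such a triangulation is a unimodular simplex and therefore of lattice width $1$, hence stably rational again by Example~\ref{exam:prim}.

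To apply the theorem I also need $\Delta$ itself to be stably irrational. But this follows from the hypotheses: $X$ is very general and stably irrational, and by Corollary~\ref{coro:verygen} the stably rational locus in the universal family of degree $d$ hypersurfaces is a countable union of proper closed subsets, so a very general degree $d$ hypersurface is stably irrational; by Example~\ref{exam:hypersurf} this is exactly the statement that $\Delta$ is stably irrational. Theorem~\ref{theo:variation}\eqref{it:varyface} then yields that for a very general polynomial $g$ with Newton polytope $\Delta$, $Z^o(g)$ is not stably birational to $Z^o(g_\delta)$; since $X$ is very general with respect to $H$, any defining polynomial of $X$ satisfies this condition, and the corollary follows.

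The main obstacle is verifying condition (c), specifically the existence of regular unimodular triangulations of the dilated simplices $(d-1)\Delta_{n+1}$ and $(d-1)\Delta_n$. This is a classical fact in the theory of lattice polytopes (see, e.g., the monograph of De Loera--Rambau--Santos), but it is the only genuinely combinatorial input the argument requires; everything else is a direct bookkeeping check against the hypotheses of Theorem~\ref{theo:variation}.
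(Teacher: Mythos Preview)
Your proof is correct and follows essentially the same route as the paper's: the same subdivision $\mathscr{P}$ of $d\Delta_{n+1}$ into $\{u_0\le 1\}$ and $\{u_0\ge 1\}$, the same observation that only the width-$1$ cell meets $\delta$, and the same appeal to regular unimodular triangulations of the remaining dilated simplices to verify condition~(c). The only differences are cosmetic (you work in affine coordinates rather than homogeneous ones, and you spell out slightly more carefully why the hypothesis forces $\Delta$ to be stably irrational).
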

\begin{proof}
Let $z_0,\ldots,z_{n+1}$ be homogeneous coordinates on $\mathbb{P}^{n+1}_k$.
We may assume that $H$ is the coordinate hyperplane defined by $z_0=0$. Let $g$ be a very general  homogeneous polynomial of degree $d$ in $k[z_0,\ldots,z_{n+1}]$. Then the Newton polytope of $g$
is given by
$$\Delta=\{(u_0,\ldots,u_{n+1})\in \R_{\geq 0}^{n+2}\,|\,u_0+\ldots+u_{n+1}= d\}.$$
Let $\delta$ be the face of $\Delta$ defined by $u_0=0$.
 We must prove that $Z^o(g)$ is not stably birational to $Z^o(g_{\delta})$. For this purpose, it suffices to construct a regular polyhedral subdivision $\mathscr{P}$ that satisfies the conditions of Theorem \ref{theo:variation}\eqref{it:varyface} (note that $\Delta$ is stably irrational by our assumption that $X$ is stably irrational). We can take for $\mathscr{P}$ the subdivision whose maximal faces are given by
$$\begin{array}{lll}
 \tau_1&=&\{(u_0,\ldots,u_{n+1})\in \Delta\,|\, u_0\leq 1\},
 \\ \tau_2&=& \{(u_0,\ldots,u_{n+1})\in \Delta\,|\,u_0\geq 1\}.
 \end{array}$$
 This is precisely the subdivision from Example \ref{exam:linear}, see Figure \ref{fig:linear}.
 The only face of $\mathscr{P}$ that intersects $\delta$ and that is not contained in the boundary of $\Delta$ is the face $\tau_1$. This face is stably rational, because it has lattice width $1$ (see Example \ref{exam:prim}). The other faces of $\mathscr{P}$ that meet the relative interior of $\Delta$ admit regular subdivisions into unimodular simplices.
  Thus it follows from Theorem \ref{theo:variation}\eqref{it:varyface} that $Z^o(g)$ is not stably birational to $Z^o(g_{\delta})$.
\end{proof}

\subsection{Raising degree and dimension}
Corollary \ref{coro:hyperplane} has the following interesting application.
\begin{theo}\label{theo:slope}
Let $d$ and $n$ be positive integers. Assume that a very general degree $d$ hypersurface
in $\mathbb{P}^{n+1}_k$ is not stably rational. Then for all integers $n'\geq n$ and $d'\geq d+n'-n$, a very general degree $d'$ hypersurface in $\mathbb{P}^{n'+1}_k$ is not stably rational.
\end{theo}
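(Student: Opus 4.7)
The plan is to split the theorem into two elementary inductive steps: a \emph{degree increment} $(d,n) \Rightarrow (d+1,n)$ and a \emph{diagonal increment} $(d,n) \Rightarrow (d+1, n+1)$. Iterating the diagonal increment $(n'-n)$ times to reach $(d + n' - n, n')$ and then the degree increment an additional $d' - d - (n' - n) \geq 0$ times produces the arbitrary admissible target $(d', n')$. Thus it suffices to establish these two increments.

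Both increments will be handled by the same construction, namely the tropical degeneration of Example \ref{exam:linear}. For the degree increment I take $f = tf_0 + z_0 f_1$ in $K[z_0, \ldots, z_{n+1}]$ with $f_0$ a general degree $d+1$ form in $z_1, \ldots, z_{n+1}$ and $f_1$ a general degree $d$ form in $z_0, \ldots, z_{n+1}$; for the diagonal increment I run the analogous construction in $K[z_0, \ldots, z_{n+2}]$ using forms of degrees $d+1$ and $d$ in the appropriate variables. In either case, for general choices $f$ is sch\"on, and combining the strata description from Example \ref{exam:linear} with Theorem \ref{theo:nondeg} yields
\[
\Volsb(\sbir{Z^o(f)}) \;=\; \sbir{E_1} + \sbir{\Spec k} - \sbir{E_1 \cap H},
\]
where $E_1 = \{f_1 = 0\}$ is a degree $d$ hypersurface in the ambient projective space and $H = \{z_0 = 0\}$ is the coordinate hyperplane.

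It then suffices to establish $\sbir{E_1} \neq \sbir{E_1 \cap H}$, for then $\Volsb(\sbir{Z^o(f)}) \neq \sbir{\Spec k}$ and Corollary \ref{coro:obstruction} together with Proposition \ref{prop:stabyratpol} (applied to the dilated simplex of Example \ref{exam:hypersurf}) promote this to the stable irrationality of a very general hypersurface of the new degree and dimension. For the degree increment, $E_1$ is a very general degree $d$ hypersurface in $\mathbb{P}^{n+1}_k$, which is stably irrational by hypothesis, so Corollary \ref{coro:hyperplane} applied to $X = E_1$ and the fixed hyperplane $H$ delivers the required inequality directly. For the diagonal increment, the hypothesis instead applies to $E_1 \cap H$, which is the very general degree $d$ hypersurface in $\mathbb{P}^{n+1}_k \cong H$; if we had $\sbir{E_1} = \sbir{E_1 \cap H}$ then $E_1$ would also be stably irrational, and Corollary \ref{coro:hyperplane} applied to the very general $E_1 \subset \mathbb{P}^{n+2}_k$ would contradict this equality.

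The principal subtlety will be bookkeeping the various ``very general'' conditions: one needs $f_1$ generic enough that $E_1$ avoids the countably many thin loci excluded by Corollary \ref{coro:hyperplane}, while simultaneously keeping the pair $(f_0, f_1)$ in the open sch\"on locus required for the strata identification in Example \ref{exam:linear} and for Theorem \ref{theo:nondeg}. Each condition cuts out a countable intersection of dense opens in the space of coefficient tuples, so a common choice exists, and the patchworking recipe of Section \ref{ss:viro} produces the corresponding polynomial $f$ over $K$.
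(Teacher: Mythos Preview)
Your proposal is correct and essentially identical to the paper's own proof: the same inductive reduction to the two increments, the same degeneration from Example~\ref{exam:linear}, and the same appeal to Corollary~\ref{coro:hyperplane} (with the same case split in the diagonal step). The only cosmetic difference is that the paper packages the obstruction computation via Theorem~\ref{theo:main}, whereas you invoke Theorem~\ref{theo:nondeg}, Corollary~\ref{coro:obstruction}, and Proposition~\ref{prop:stabyratpol} separately---but Theorem~\ref{theo:main} is proved by combining exactly those ingredients.
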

\begin{proof}
By induction, it suffices to prove the theorem for $n'=n$ and $d'=d+1$, and for $n'=n+1$ and $d'=d+1$. Assume that we are in either of these cases.
 Let $\Delta$ be the dilatation with factor $d+1$ of the unimodular $(n'+1)$-dimensional simplex:
$$\Delta=\{u\in \R^{n'+1}_{\geq 0}\,|\,u_1+\ldots+u_{n'+1}\leq d+1\}.$$
Let $\mathscr{P}$ be the regular polyhedral subdivision of $\Delta$ with maximal faces
$$\begin{array}{lll}
 \tau_1&=&\{(u_1,\ldots,u_{n'+1})\in \R^{n'+1}_{\geq 0}\,|\, u_1+\ldots+u_{n'+1}\leq d\},
 \\[1.5ex] \tau_2&=&\{(u_1,\ldots,u_{n'+1})\in \R^{n'+1}_{\geq 0}\,|\, d\leq u_1+\ldots+u_{n'+1}\leq d+1\}.
 \end{array}$$
   This subdivided polytope is isomorphic to the one from Example \ref{exam:linear} (with $n$ replaced by $n'$ and $d$ by $d+1$), see Figure \ref{fig:linear}.
 We denote by $\sigma$ the intersection of $\tau_1$ and $\tau_2$.

Let $g$ be a very general polynomial in $k[x_1,\ldots,x_{n'+1}]$ with Newton polytope $\Delta$. Then $Z(g)$ is a very general hypersurface of degree $d+1$ in $\mathbb{P}^{n'+1}_k$, and $Z(g_{\tau_2})$ is stably rational because $\tau_2$ has lattice width $1$. Moreover, $Z(g_{\tau_1})$ is a very general hypersurface of degree $d$ in $\mathbb{P}^{n'+1}_k$, and $Z(g_{\sigma})$ is the intersection of $Z(g_{\tau_1})$ with the hyperplane at infinity.

 If $n'=n$, then our assumptions imply that $Z(g_{\tau_1})$ is stably irrational, so that it is not stably birational to $Z(g_{\sigma})$ by Corollary \ref{coro:hyperplane}. If $n'=n+1$, then
 $Z(g_{\sigma})$ is stably irrational, so that it is still not stably birational to $Z(g_{\tau_1})$: if $Z(g_{\tau_1})$ is stably rational then this is trivial, and otherwise it follows again from Corollary \ref{coro:hyperplane}.
 Thus for both values of $n'$, Theorem \ref{theo:main} implies that $\Delta$ is stably irrational.
\end{proof}

We will also need the following variant of Theorem \ref{theo:slope} for products of projective spaces.

 \begin{theo}\label{theo:slope2} Let $\ell,\,m,\,d$ and $e$ be positive integers.
 Suppose that a very general divisor of bidegree $(d,e)$ in $\PP_k^{\ell}\times_k \PP_k^m$ is stably irrational. Then for all integers $\ell'\geq \ell$ and $m'\geq m$, and all integers $d'\geq d+\ell'-\ell$ and $e'\geq e+m'-m$, a 
 very general divisor of bidegree $(d',e')$ in $\PP^{\ell'}_k\times_k \PP_k^{m'}$ is stably irrational.
 \end{theo}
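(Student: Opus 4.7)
The plan is to mimic the proof of Theorem~\ref{theo:slope}, using the product polytope $d\Delta_\ell \times e\Delta_m$ (as in Example~\ref{exam:bideg2}) in place of the dilated simplex $d\Delta_{n+1}$. By induction, it suffices to treat four base cases, each raising one of the parameters by one. The cases bumping $e$ or $m$ are symmetric to those bumping $d$ or $\ell$ via swapping the two factors, so only two steps require attention: (A) $(d,e,\ell,m) \to (d+1,e,\ell,m)$ and (C) $(d,e,\ell,m) \to (d+1,e,\ell+1,m)$.

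Before treating these steps I would establish a bidegree analogue of Corollary~\ref{coro:hyperplane}: \emph{if $X$ is a very general stably irrational bidegree $(d,e)$ hypersurface in $\PP_k^\ell \times_k \PP_k^m$ and $H \subset \PP_k^\ell$ is a very general hyperplane, then $X$ is not stably birational to $X \cap (H \times_k \PP_k^m)$}. This would follow from Theorem~\ref{theo:variation}\eqref{it:varyface} applied to $\Delta = d\Delta_\ell \times e\Delta_m$ with boundary face $\delta = \{u_1 = 0\} \cap \Delta \cong d\Delta_{\ell-1} \times e\Delta_m$, using the regular integral subdivision whose maximal cells are the products with $e\Delta_m$ of the two slabs $\{u_1 \leq 1\} \cap d\Delta_\ell$ and $\{u_1 \geq 1\} \cap d\Delta_\ell$ (as in Example~\ref{exam:linear}). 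Hypothesis (a) holds because $\delta$ is a face of the first slab; hypothesis (b) holds because the unique interior face of the subdivision meeting $\delta$ is the slab $\{u_1 \leq 1\} \cap \Delta$, which has lattice width one and is stably rational by Example~\ref{exam:prim}.

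With the lemma in hand, Step (A) is handled as follows. Set $\Delta = (d+1)\Delta_\ell \times e\Delta_m$ and use the analogous $\{u_1 \leq 1\} / \{u_1 \geq 1\}$ subdivision. Then $Z(g_{\tau_1})$ is stably rational, $Z(g_{\tau_2})$ is a very general bidegree $(d,e)$ hypersurface in $\PP_k^\ell \times_k \PP_k^m$ and is stably irrational by hypothesis, and $Z(g_\sigma)$ is a bidegree $(d,e)$ hypersurface in $\PP_k^{\ell-1} \times_k \PP_k^m$; the bidegree hyperplane lemma applied to $Z(g_{\tau_2})$ gives $\sbir{Z(g_{\tau_2})} \neq \sbir{Z(g_\sigma)}$, so the inequation~\eqref{eq:assum} of Theorem~\ref{theo:main} is satisfied. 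For Step (C), take $\Delta = (d+1)\Delta_{\ell+1} \times e\Delta_m$ with the same subdivision: now $\tau_2 \cong d\Delta_{\ell+1} \times e\Delta_m$ and $\sigma \cong d\Delta_\ell \times e\Delta_m$ (the hypothesized stably irrational case). If $Z(g_{\tau_2})$ is stably rational the non-triviality of the alternating sum follows immediately from $\sbir{Z(g_\sigma)} \neq \sbir{\Spec k}$; otherwise a second application of the bidegree hyperplane lemma (now to $Z(g_{\tau_2})$) again separates $Z(g_{\tau_2})$ from $Z(g_\sigma)$.

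The main obstacle is verifying hypothesis (c) of Theorem~\ref{theo:variation}\eqref{it:varyface} while establishing the bidegree hyperplane lemma: each interior face of the subdivision of $\Delta = d\Delta_\ell \times e\Delta_m$, in particular the $(d-1)$-scaled products $(d-1)\Delta_\ell \times e\Delta_m$ and $(d-1)\Delta_{\ell-1} \times e\Delta_m$, must carry a regular integral subdivision whose non-boundary faces are all stably rational. I would dispose of this by taking a regular unimodular triangulation of the first factor and crossing it with $e\Delta_m$; every non-boundary face of the resulting subdivision is a product of a unimodular simplex with $e\Delta_m$, hence has lattice width one, and is therefore stably rational by Example~\ref{exam:prim}.
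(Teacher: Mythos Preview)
Your overall strategy is the same as the paper's: reduce by induction to the two cases $(d,e,\ell,m)\to(d+1,e,\ell',m)$ with $\ell'\in\{\ell,\ell+1\}$, slice the product polytope $(d+1)\Delta_{\ell'}\times e\Delta_m$ into a width-one slab and a copy of $d\Delta_{\ell'}\times e\Delta_m$, and then invoke Theorem~\ref{theo:variation}\eqref{it:varyface} on the latter to separate it from its codimension-one face $\sigma'$. The paper does this inline rather than packaging it as a separate ``bidegree hyperplane lemma'', and it slices along the hyperplane $u_1+\cdots+u_{\ell'}=d$ rather than $u_1=1$, but these are affinely equivalent choices.

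There is, however, a gap in your verification of hypothesis~(c). You propose to subdivide the interior face $(d-1)\Delta_\ell\times e\Delta_m$ (and its analogue one dimension lower) by triangulating only the first factor unimodularly and taking the product with $e\Delta_m$. But the faces of that product subdivision corresponding to \emph{interior vertices} of the triangulation of the first factor are copies of $e\Delta_m$ itself, which has lattice width $e$, not $1$. Whenever $e\geq 2$ and the first factor has interior lattice points (i.e.\ $d\geq \ell+2$), these faces need not be stably rational, so hypothesis~(c) fails for your subdivision.

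The fix is simple and is what the paper does: assert directly that each of these product polytopes admits a regular \emph{unimodular} triangulation (a standard fact for products of dilated simplices, obtained for instance by refining your product subdivision via staircase triangulations of each cell $\Delta_a\times e\Delta_m$). Every face of a unimodular triangulation is a unimodular simplex and hence stably rational, so (c) is satisfied. With this correction, your argument goes through and matches the paper's.
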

 
 \begin{proof}
 The proof is very similar to that of Theorem \ref{theo:slope}, so we only indicate what needs to be modified. 
  By induction, it suffices to prove the case where $\ell'\in \{\ell,\ell+1\}$ and $m'=m$, $d'=d+1$, $e'=e$.
  We set
  $$
 \Delta_1=\{u\in \RR_{\ge 0}^{\ell'}\,|\, u_1+\ldots+u_{\ell'} \leq d+1\}\text{ and }\Delta_2=\{v\in \RR_{\ge 0}^{m}\,|\, v_1+\ldots+v_{m}\leq e \},$$
 and we write $\Delta=\Delta_1\times \Delta_2$. Then $\Delta$ is the Newton polytope of a general hypersurface of bidegree $(d+1,e)$ in $\mathbb{P}^{\ell'}_k\times_k \mathbb{P}^m_k$. 
 We apply the subdivision from the proof of Theorem \ref{theo:slope} to the polytope $\Delta_1$ (with $n'+1$ replaced by $\ell'$).
  Taking the product with $\Delta_2$ we obtain a regular subdivision of $\Delta$ such that the faces that meet the relative interior of $\Delta$ are $\tau_1'=\tau_1\times \Delta_2$, $\tau_2'=\tau_2\times \Delta_2$ and $\sigma'=\sigma \times \Delta_2$.
 The face  $\tau'_2$ has lattice width $1$ and, therefore, it is stably rational. Let $h$ be a very general polynomial over $k$ with Newton polytope $\tau'_1$.
 Then, just like in the proof of Theorem \ref{theo:slope} (where the role of $h$ was played by $g_{\tau_1}$), it is enough to show that 
  $Z(h)$ is not stably birational to $Z(h_{\sigma'})$. 
 
  If $\ell'=\ell$, the assumption in the statement implies that $Z(h)$ is stably irrational, since it is a very general hypersurface of bidegree $(d,e)$ in $\mathbb{P}^{\ell}_k\times_k \mathbb{P}^{m}_k$. If $\ell'=\ell+1$ then we similarly find that $Z(h_{\sigma'})$ is stably irrational. Thus in both cases, we may assume that $\tau'_1$ is stably irrational. We consider a further regular subdivision of $\tau'_1$ with maximal faces 
 $$\begin{array}{lll}
     \rho_1 &=& \{(u,v)\in \tau'_1\,|\,u_1+\ldots+u_{\ell'}\leq d-1\},
     \\[1.5ex]
      \rho_2&=& \{(u,v)\in \tau'_1\,|\,d-1\leq u_1+\ldots+u_{\ell'}\leq d\}.
 \end{array}$$
The face $\rho_2$ has lattice width $1$ and thus is stably rational. The face $\rho_1$ does not intersect $\sigma$ and admits a regular subdivision into unimodular simplices. Now it follows from Theorem \ref{theo:variation} that $Z(h)$ is not stably birational to $Z(h_{\sigma'})$. 
\end{proof}

\begin{rema}
It may be hard to determine whether a given polytope admits a subdivision into  stably rational polytopes. For instance, consider the simplicial polytope $\Delta$ given as the convex hull of $(6,14,17,65)$ and the four standard basis vectors in $\RR^4$ (this polytope appears in \cite{HZ}). Then the only lattice points contained in $\Delta$ are its vertices, so that $\Delta$ cannot be subdivided further. Yet $\Delta$ is itself not stably rational: its so-called {\em Fine interior} is non-empty, which implies that a generic hypersurface with Newton polytope $\Delta$ has non-negative Kodaira dimension by the results in Jonathan Fine's PhD thesis (Warwick, 1983). See the appendix to \S4 in \cite{reid} and  Theorem 2.18 in \cite{batyrev} for published accounts.
\end{rema}

\section{Hypersurfaces in projective space}\label{sec:app}
\subsection{The quartic fivefold}\label{quarticfivefoldsection}
It has been conjectured that smooth quartic hypersurfaces over $k$ are stably irrational in all dimensions $n>0$. For curves and surfaces this follows from existence of global sections of the canonical bundle. Stable irrationality of a very general quartic threefold was proved by Colliot-Th\'el\`ene and Pirutka in \cite{CTP}, and the fact that a very general quartic fourfold is stably irrational is a special case of Totaro's results in \cite{totaro}. Totaro's results were later improved by Schreieder in \cite{schreieder}. The first open case is that of quartic fivefolds, which are known to be unirational \cite{CM}. We will now prove that a very general quartic fivefold is stably irrational. In fact, we will prove a conditional result in arbitrary dimension: stable irrationality of a ``special'' quartic $(n-1)$-fold implies stable irrationality of a very general degree $d$ hypersurface in $\mathbb{P}^{n+1}_k$ for all $d\geq 4$.

\begin{theo}\label{theo:quartic1}
Let $n$ and $d$ be integers satisfying $n\geq 2$ and $d\geq 4$.
Assume that there exists a stably irrational quartic double $(n-1)$-fold over $k$ with at most isolated ordinary double points as singularities. Then a
 very general hypersurface of degree $d\geq 4$ in $\mathbb{P}^{n+1}_{k}$ is not stably rational.
\end{theo}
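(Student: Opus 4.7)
The plan is to apply Theorem \ref{theo:main} to the simplest nontrivial subdivision of $\Delta = 4\Delta_{n+1}$, reducing the problem to the stable irrationality of the quartic double $(n-1)$-fold polytope. First, Theorem \ref{theo:slope} with $n' = n$ reduces the statement to the case $d = 4$: once a very general quartic $n$-fold in $\mathbb{P}^{n+1}_k$ is shown to be stably irrational, every degree $d \geq 4$ follows. For the quartic case, we would introduce the regular integral polyhedral subdivision $\mathscr{P}$ of $\Delta$ induced by the convex piecewise linear function $\varphi(u) = |u_n - u_{n+1}|$. This has two maximal cells $P_\pm = \{\pm(u_n - u_{n+1}) \geq 0\} \cap \Delta$, separated by the common face $\delta = \{u_n = u_{n+1}\} \cap \Delta$. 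Since the cutting hyperplane meets the edge $[4e_n, 4e_{n+1}]$ at the lattice point $2(e_n + e_{n+1})$, the subdivision is integral. The key geometric identification is that under the unimodular change of variables $v_n = u_n - u_{n+1}$, $v_{n+1} = u_{n+1}$ (with $v_i = u_i$ for $i < n$), the cell $P_+$ becomes exactly the quartic double $n$-fold polytope $\{v \in \R^{n+1}_{\geq 0} : v_1 + \dots + v_n + 2 v_{n+1} \leq 4\}$ of Example \ref{exam:doublequar}, and its facet $\delta$ (the locus $v_n = 0$) becomes the quartic double $(n-1)$-fold polytope $\Delta_{\mathrm{DC}}$.

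A short vertex--facet incidence check confirms that the only faces of $\mathscr{P}$ not contained in $\partial \Delta$ are $P_+$, $P_-$, and $\delta$ itself. Choosing very general coefficients $d_m \in k$ and letting $f$ be the Viro patchworking polynomial of Section \ref{ss:viro}, Bertini ensures that every $Z^o(g_F)$ is smooth, so Theorem \ref{theo:main} applies. After symmetrizing the contributions of $P_+$ and $P_-$ (which have the same Newton polytope up to the unimodular involution $u_n \leftrightarrow u_{n+1}$), the obstruction reduces to the inequality
\[
2 \sbir{Z(g_{P_+})} - \sbir{Z(g_\delta)} \neq \sbir{\Spec k}
\]
in $\Z[\SB_k]$. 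Since this group is free abelian, the equation $2 \sbir{Z(g_{P_+})} = \sbir{Z(g_\delta)} + \sbir{\Spec k}$ matches coefficients on the basis element $\sbir{Z(g_{P_+})}$ only when $\sbir{Z(g_\delta)} = \sbir{\Spec k}$, i.e., only when $Z^o(g_\delta)$ is stably rational; in particular, nontriviality of the obstruction is independent of whether the quartic double $n$-fold class on the side pieces is trivial or not.

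To exhibit a stably irrational $Z^o(g_\delta)$, we invoke the hypothesis. Given the stably irrational ODP quartic double $(n-1)$-fold $Y_0$, the specialization machinery underlying Theorem \ref{theo:vol} and Corollary \ref{coro:obstruction}, applied to a strictly toroidal model accommodating the isolated nodes of $Y_0$, shows that the stable irrationality descends to the smooth generic fiber of a one-parameter deformation of $Y_0$ inside the family of quartic double covers. Hence some smooth quartic double $(n-1)$-fold is stably irrational; by Corollary \ref{coro:verygen} a very general one is, and by Proposition \ref{prop:stabyratpol} the polytope $\Delta_{\mathrm{DC}}$ is stably irrational. This verifies the tropical obstruction, so Theorem \ref{theo:main} yields stable irrationality of $\Delta = 4\Delta_{n+1}$, and hence (Example \ref{exam:hypersurf}) of a very general quartic $n$-fold in $\mathbb{P}^{n+1}_k$; Theorem \ref{theo:slope} then closes the proof for all $d \geq 4$.

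The main obstacle is this last upgrade from the ODP example to smooth Newton-nondegenerate members: because $\mathbb{P}_\Q(\Delta_{\mathrm{DC}}) = \mathbb{P}(1,\ldots,1,2)$ is singular, the smooth-ambient form of Proposition \ref{prop:stabyratpol} is unavailable, and an isolated ODP in dimension $n-1 \geq 3$ does not literally fit into the narrowest form of a strictly toroidal degeneration listed in Example \ref{exam:standardmod}. Handling this requires a semistable reduction or small resolution of the one-parameter degeneration and careful tracking of which exceptional strata contribute to the stable birational volume --- standard in the specialization-of-rationality literature, but technically the most delicate point of the argument.
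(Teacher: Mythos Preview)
Your approach is essentially identical to the paper's: the same reduction to $d=4$ via Theorem \ref{theo:slope}, the same hyperplane subdivision of $4\Delta_{n+1}$ (the paper slices along $u_0=u_{n+1}$ rather than $u_n=u_{n+1}$, which is immaterial), the same symmetry trick on the two maximal cells, and the same obstruction inequality $2\sbir{Z(g_{P_+})}-\sbir{Z(g_\delta)}\neq\sbir{\Spec k}$.

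Two clarifications are worth making. First, to collapse the two side contributions into $2\sbir{Z(g_{P_+})}$ you must actually \emph{impose} the symmetry on the coefficients: the paper chooses $d_m$ very general on one half and then sets $d_m=d_{\tau(m)}$ on the other, so that $Z^o(g_{P_+})\cong Z^o(g_{P_-})$ as schemes. Your phrasing ``very general coefficients\ldots after symmetrizing'' blurs this; with genuinely very general (hence asymmetric) coefficients the two classes need not coincide in $\Z[\SB_k]$, and you would have to argue the obstruction differently. Second, the ODP-to-smooth upgrade you flag as the main obstacle is precisely Theorem~4.3.1 of \cite{NiSh}, which the paper simply cites: that result already establishes that stable rationality specializes through isolated ordinary double points, so no bespoke semistable reduction is needed here.
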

\begin{proof}
By Theorem \ref{theo:slope}, it suffices to prove the case $d=4$.
 Let $\Delta$ be the convex hull in $\R^{n+2}$ of the points $4e_0,4e_1,\ldots,\,4e_{n+1}$ where  $(e_0,\ldots,e_{n+1})$ is the standard basis of $\R^{n+2}$. Then $\Delta$ is the Newton polytope of a general quartic hypersurface in $\mathbb{P}^{n+1}_{k}$. We will show that $\Delta$ is stably irrational.

We define a regular integral polyhedral subdivision $\mathscr{P}$ of $\Delta$ by slicing $\Delta$ with the hyperplane
$$
H=\{(u_0,u_1,\ldots,u_{n+1})\in \R^{n+1}\,|\,u_0=u_{n+1}\}.
$$
 A picture of this subdivision for $n=2$ is shown in Figure \ref{fig:quarticdouble}.


 \tdplotsetmaincoords{82}{75}
 \begin{center}
\begin{figure}[h!]
  \begin{tikzpicture}[%
    tdplot_main_coords,
    scale=0.75,
    >=stealth
  ]   
\path (0,0,1) node[circle, fill, inner sep=1.3]{};
\path (0,0,2) node[circle, fill,color=red, inner sep=1.3]{};
\path (0,0,3) node[circle, fill, inner sep=1.3]{};
\path (0,0,4) node[circle, fill, inner sep=1.3]{};

\path (0,1,0) node[circle, fill, inner sep=1.3]{};
\path (0,1,1) node[circle, fill, inner sep=1.3]{};
\path (0,1,2) node[circle, fill, inner sep=1.3]{};
\path (0,1,3) node[circle, fill, inner sep=1.3]{};

\path (0,2,0) node[circle, fill, inner sep=1.3]{};
\path (0,2,1) node[circle, fill, color=red,inner sep=1.3]{};
\path (0,2,2) node[circle, fill, inner sep=1.3]{};

\path (0,3,0) node[circle, fill, inner sep=1.3]{};
\path (0,3,1) node[circle, fill, inner sep=1.3]{};

\path (0,4,0) node[circle, fill, color=red,inner sep=1.3]{};

\path (1,0,0) node[circle, fill, inner sep=1.3]{};
\path (1,0,1) node[circle, fill, inner sep=1.3]{};
\path (1,0,2) node[circle, fill, inner sep=1.3]{};
\path (1,0,3) node[circle, fill, inner sep=1.3]{};

\path (1,1,0) node[circle, fill, inner sep=1.3]{};
\path (1,1,1) node[circle, fill, color=red,inner sep=1.3]{};
\path (1,1,2) node[circle, fill, inner sep=1.3]{};

\path (1,2,0) node[circle, fill, inner sep=1.3]{};
\path (1,2,1) node[circle, fill, inner sep=1.3]{};

\path (1,3,0) node[circle, fill,color=red, inner sep=1.3]{};

\path (2,0,0) node[circle, fill, inner sep=1.3]{};
\path (2,0,1) node[circle, fill,color=red, inner sep=1.3]{};
\path (2,0,2) node[circle, fill, inner sep=1.3]{};

\path (2,1,0) node[circle, fill, inner sep=1.3]{};
\path (2,1,1) node[circle, fill, inner sep=1.3]{};

\path (2,2,0) node[circle, fill,color=red, inner sep=1.3]{};

\path (3,0,0) node[circle, fill, inner sep=1.3]{};
\path (3,0,1) node[circle, fill, inner sep=1.3]{};

\path (3,1,0) node[circle, fill, color=red,inner sep=1.3]{};

\path (4,0,0) node[circle, fill, color=red,inner sep=1.3]{};

    \draw[ultra thick,->] (0,0,0) -- (1,0,0);
    \draw[ultra thick,->] (0,0,0) -- (0,1,0);
    \draw[ultra thick,->] (0,0,0) -- (0,0,1);    
    \draw[fill=gray,opacity=0.5] (0,0,0) -- (0,4,0) -- (4,0,0) -- cycle;
    \draw[fill=gray,opacity=0.25] (0,0,0) -- (0,0,4) -- (4,0,0) -- cycle;
    \draw[fill=black,opacity=0.1] (0,0,0) -- (0,4,0) -- (0,0,4) -- cycle;
    \draw[fill=red,opacity=0.3] (4,0,0) -- (0,4,0) -- (0,0,2) -- cycle;      
       \path (0,0,4) node[circle, fill, inner sep=1.3]{};
        \path (0,0,3) node[circle, fill, inner sep=1.3]{};
  \end{tikzpicture}   
   \caption{Degenerating a quartic surface into a union of two isomorphic quartic double surfaces  intersecting along a quartic double curve.}
 \label{fig:quarticdouble}
\end{figure}
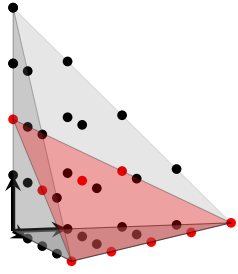
\end{center}

 Then $\mathscr{P}$ has precisely three faces that are not contained in the boundary of $\Delta$: two faces of dimension $n+1$, namely,
 $$\begin{array}{lcl}
 \delta_{\leq}&=&\{(u_0,u_1,\ldots,u_{n+1})\in \Delta\,|\,u_0\leq u_{n+1}\}
 \\  \delta_{\geq}&=&\{(u_0,u_1,\ldots,u_{n+1})\in \Delta\,|\,u_0\geq u_{n+1}\}
 \end{array}$$
and one face of dimension $n$, namely,
 $$ \delta_{=}=\{(u_0,u_1,\ldots,u_{n+1})\in \Delta\,|\,u_0= u_{n+1}\}.$$
The unimodular involution
$$\tau\colon \R^{n+2}\to \R^{n+2},(u_0,\ldots,u_{n+1})\mapsto (u_{n+1},u_1,\ldots,u_n,u_0)$$
that swaps the first and last coordinate preserves $\Delta$ and $\delta_{=}$, and exchanges $\delta_{\leq}$ and $\delta_{\geq}$.
 The subdivision $\mathscr{P}$ is regular: it is induced by the convex piecewise linear function
 $$\varphi\colon \Delta\to \R,\,(u_0,\ldots,u_n)\mapsto \max\{u_0,u_{n+1}\}.$$

 We make a very general choice of coefficients $d_m$ in $k$, where $m$ runs over the lattice points in $\delta_{\leq}$. For every lattice point $m$ in $\delta_{\geq}$, we set $d_m=d_{\tau(m)}$.
 Then the data $(\Delta,\mathscr{P},d_m)$ satisfy the non-degeneracy conditions of Theorem \ref{theo:main}. For every face $\delta$ of $\Delta$, we define the polynomial $g_{\delta}$ as in Theorem \ref{theo:main}:
 $$g_{\delta}=\sum_{m\in \delta\cap \Z^{n+2}}d_mx^m\in k[\Z^{n+2}].$$

 Now, the face $\delta_{=}$ is the Newton polytope of a general quartic double $(n-1)$-fold.
   By Theorem 4.3.1 in \cite{NiSh}, our assumption implies that a very general quartic double $(n-1)$-fold is stably irrational.
  Thus it follows from Proposition \ref{prop:stabyratpol} that $Z^o(g_{\delta_{=}})$ is stably irrational. Moreover, by the symmetry in our choice of the coefficients $d_m$, the schemes $Z^o(g_{\delta_{\leq}})$ and $Z^o(g_{\delta_{\geq}})$ are isomorphic. It follows that
  $$\sbir{Z^o(g_{\delta_{\leq}})}+\sbir{Z^o(g_{\delta_{\geq}})}- \sbir{Z^o(g_{\delta_{=}})}=2\sbir{Z^o(g_{\delta_{\leq}})}-\sbir{Z^o(g_{\delta_{=}})}\neq \sbir{\Spec k}$$ in $\Z[\SB_k]$, because $\sbir{Z^o(g_{\delta_{=}})}\neq \sbir{\Spec k}$. Now Theorem \ref{theo:main} implies that $\Delta$ is stably irrational.
\end{proof}

\begin{coro}\label{cor:quartic-fivefold}
A very general hypersurface of degree $d\geq 4$ in $\mathbb{P}^{5}_k$ or $\mathbb{P}^{6}_k$ is not stably rational. In particular, a very general quartic fivefold is not stably rational.
\end{coro}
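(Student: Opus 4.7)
The plan is to obtain the statement as a direct application of Theorem \ref{theo:quartic1}, with $n=4$ and $n=5$. Since Theorem \ref{theo:quartic1} already handles all degrees $d\geq 4$ (internally reducing to $d=4$ via Theorem \ref{theo:slope}), the only thing to check is its single hypothesis: the existence of a stably irrational quartic double $(n-1)$-fold over $k$ with at most isolated ordinary double points as singularities, in dimensions $n-1=3$ and $n-1=4$.

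For the case $n=4$ (hypersurfaces in $\mathbb{P}^5_k$, in particular the quartic fivefold), I would appeal to the classical Artin--Mumford example \cite{Artin-Mumford}: a nodal quartic double solid whose singularities are isolated ordinary double points, and whose stable irrationality is already noted in Example \ref{exam:doublequar} (and recovered via the motivic obstruction in Example~4.3.2 of \cite{NiSh}). This immediately supplies the required witness for $n=4$.

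For the case $n=5$ (hypersurfaces in $\mathbb{P}^6_k$), I would invoke the theorem of Hassett--Pirutka--Tschinkel \cite{HPTdouble}, recorded in Example \ref{exam:doublequar}, to the effect that a very general smooth quartic double fourfold is stably irrational. A smooth variety vacuously has only ordinary double point singularities (namely none), so this provides a witness for $n=5$ as required by Theorem \ref{theo:quartic1}.

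Combining these two applications gives the corollary, with the quartic fivefold being the special case $n=4$, $d=4$. I do not anticipate any real obstacle: all the substantive work has already been done in Theorem \ref{theo:quartic1} and the cited stable irrationality results for quartic double solids and fourfolds; the corollary is essentially a bookkeeping step that matches these inputs to the hypothesis of the theorem.
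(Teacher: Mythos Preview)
Your proposal is correct and matches the paper's own proof essentially verbatim: both cases are obtained by applying Theorem~\ref{theo:quartic1}, invoking the Artin--Mumford nodal quartic double solid for $n=4$ and the Hassett--Pirutka--Tschinkel smooth quartic double fourfold for $n=5$. There is nothing to add.
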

\begin{proof}
The case of dimension $4$ follows from Artin and Mumford's famous example of a stably irrational quartic double solid with isolated ordinary double point singularities \cite{Artin-Mumford}.
 The case of dimension $5$ follows from the result by Hassett, Pirutka \& Tschinkel that
 a very general quartic double fourfold is stably irrational \cite{HPTdouble}.
\end{proof}

The only new case covered by Corollary \ref{cor:quartic-fivefold} is the quartic fivefold: all other cases of the corollary were already contained in the range of results by Schreieder \cite{schreieder}, so we merely obtain an alternative proof.
 
 As a further illustration of our techniques, we also present the following variant of Theorem \ref{theo:quartic1}.

\begin{theo}\label{theo:quartic2}
Let $\ell$ and $d$ be integers satisfying $\ell\geq 1$ and  $d\geq 4$.
Assume that there exists a smooth hypersurface of bidegree $(2,2)$ in $\mathbb{P}^\ell_{k}\times_k \mathbb{P}_{k}^{\ell}$ (resp.~$\mathbb{P}^\ell_{k}\times_k \mathbb{P}_{k}^{\ell+1}$) which is stably irrational.
Then  for $n=2\ell$ (resp.~$n=2\ell+1$), a very general hypersurface of degree $d$ in $\mathbb{P}^{n+1}_{k}$ is not stably rational.
\end{theo}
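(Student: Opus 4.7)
The plan is to mirror the structure of the proof of Theorem \ref{theo:quartic1}: reduce to $d=4$ via Theorem \ref{theo:slope}, construct a regular integral polyhedral subdivision of the Newton polytope $\Delta$ of a general quartic in $\mathbb{P}^{n+1}_k$ whose unique stably irrational interior face is a $(2,2)$-Newton polytope on $\mathbb{P}^\ell \times \mathbb{P}^m$ (with $m=\ell$ or $\ell+1$), and then apply Corollary \ref{coro:irratpol}. Unlike the proof of Theorem \ref{theo:quartic1}, which slices $\Delta$ with a single hyperplane and uses an involution to merge the two halves, I would slice $\Delta$ with three parallel hyperplanes, which removes the need for any symmetry argument.

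First, I would use Theorem \ref{theo:slope} to reduce to the case $d=4$. Working with the Newton polytope $\Delta \subset \R^{n+2}$ of a general quartic in $\mathbb{P}^{n+1}_k$ as in the proof of Theorem \ref{theo:quartic1}, I partition the index set $\{0,\ldots,n+1\}$ into blocks $I_1 = \{0,\ldots,\ell\}$ and $I_2 = \{\ell+1,\ldots,n+1\}$, of sizes $\ell+1$ and $m+1$, where $m=\ell$ if $n=2\ell$ and $m=\ell+1$ if $n=2\ell+1$. Set $L(u) = \sum_{i\in I_1} u_i$, a primitive integral linear form on the saturated sublattice $M_\Delta$. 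The regular integral polyhedral subdivision $\mathscr{P}$ of $\Delta$ obtained by slicing along the hyperplanes $L=1,2,3$ is induced, for example, by the convex piecewise linear function $\varphi(u) = \max(0,L(u)-1) + \max(0,L(u)-2) + \max(0,L(u)-3)$.

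Next, I would enumerate the faces of $\mathscr{P}$ not contained in $\partial\Delta$. The $(n+1)$-dimensional interior cells are the four slabs $\delta^{(k,k+1)} = \Delta\cap\{k\leq L\leq k+1\}$ for $k=0,1,2,3$, and the only other interior cells are the three slices $\delta^{(k)} = \Delta\cap\{L=k\}$ for $k=1,2,3$; every remaining cell of $\mathscr{P}$ is obtained by further intersecting with some coordinate hyperplane $\{u_i=0\}$ and hence lies in $\partial\Delta$. Each slab $\delta^{(k,k+1)}$ has lattice width $1$ with respect to $L$ (which takes values in $\{k,k+1\}$ on its lattice points), so it is stably rational by Example \ref{exam:prim}. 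The slice $\delta^{(k)}$ is naturally identified with the product of the simplices $\{v\in\R^{I_1}_{\geq 0}:\sum v_i=k\}$ and $\{w\in\R^{I_2}_{\geq 0}:\sum w_j=4-k\}$, i.e.~with the Newton polytope of a bidegree $(k,4-k)$ hypersurface in $\mathbb{P}^\ell_k\times_k \mathbb{P}^m_k$. For $k=1$ (resp.~$k=3$) the first (resp.~second) factor is a standard simplex of lattice width $1$, so $\delta^{(1)}$ and $\delta^{(3)}$ are stably rational as well.

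Finally, the central slice $\delta^{(2)}$ is precisely the Newton polytope of a bidegree $(2,2)$ hypersurface in $\mathbb{P}^\ell_k\times_k \mathbb{P}^m_k$. Because $\mathbb{P}_\Q(\delta^{(2)}) = \mathbb{P}^\ell_\Q\times_\Q \mathbb{P}^m_\Q$ is smooth, the smooth version of Proposition \ref{prop:stabyratpol} applies, and the hypothesis of the theorem then yields that the polytope $\delta^{(2)}$ is stably irrational. For $\ell\geq 1$ one checks that $\delta^{(2)}$ is not contained in $\partial\Delta$: for any $i\in I_1$ the lattice point $2e_i+2e_{\ell+1}$ lies in $\delta^{(2)}$ with $u_i=2$, and for any $i\in I_2$ the point $2e_0+2e_i$ plays the same role. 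Corollary \ref{coro:irratpol} now concludes that $\Delta$ is stably irrational, so a very general quartic in $\mathbb{P}^{n+1}_k$ is not stably rational; combined with the initial reduction this gives the theorem. The main technical work is purely combinatorial: explicitly identifying the interior cells of $\mathscr{P}$ and verifying the lattice-width-$1$ assertions in the relevant saturated sublattices; everything else follows mechanically from the machinery of Sections~\ref{sec:sbir} and~\ref{sec:trop}.
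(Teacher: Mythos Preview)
Your proposal is correct and follows essentially the same route as the paper's proof: reduce to $d=4$ via Theorem \ref{theo:slope}, slice the quartic Newton polytope $\Delta$ with the three parallel hyperplanes $\{u_0+\cdots+u_\ell=a\}$ for $a=1,2,3$, observe that every interior face other than the $a=2$ slice has lattice width $1$, identify the $a=2$ slice with the Newton polytope of a bidegree $(2,2)$ hypersurface in $\mathbb{P}^\ell_k\times_k\mathbb{P}^m_k$, and conclude by Corollary \ref{coro:irratpol}. Your treatment is slightly more explicit than the paper's (you spell out the lattice-width-$1$ argument for $\delta^{(1)}$ and $\delta^{(3)}$ via the simplex factors, and you invoke the smooth version of Proposition \ref{prop:stabyratpol} to pass from the hypothesis to stable irrationality of the polytope), but the argument is the same.
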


\begin{proof}
 By Theorem \ref{theo:slope}, we may assume that $d=4$.
 We will prove the case $n=2\ell$; the case $n=2\ell+1$ is entirely analogous.
 Let $\Delta$ be the convex hull in $\R^{n+2}$ of the points $4e_0,\ldots,\,4e_{n+1}$ where  $(e_0,\ldots,e_{n+1})$ is the standard basis of $\R^{n+2}$. Then $\Delta$ is the Newton polytope of a general quartic hypersurface in $\mathbb{P}^{n+1}_{k}$. We define a regular integral polyhedral subdivision $\mathscr{P}$ of $\Delta$ by slicing $\Delta$ with the hyperplanes
$$
H_a=\{(u_0,\ldots,u_{n+1})\in \R^{n+1}\,|\,u_{0}+\ldots+u_{\ell}=a\}
$$
for $a=1,2,3$. The face $H_a\cap \Delta$ of $\mathscr{P}$ will be denoted by $\delta_a$. A picture of this subdivision for $\ell=1$ and $n=2\ell=2$  is given in Figure \ref{fig:bideg2}.

\begin{center}
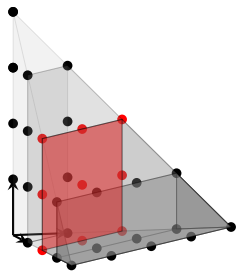
\begin{figure}[h!]
 \tdplotsetmaincoords{82}{75}
  \begin{tikzpicture}[%
    tdplot_main_coords,
    scale=0.75,
    >=stealth
  ]   
      \draw[color=gray] (3,0,0) -- (0,3,0);
            \draw[color=gray] (1,0,0) -- (0,1,0);
            
\path (0,0,1) node[circle, fill, inner sep=1.3]{};
\path (0,0,2) node[circle, fill, inner sep=1.3]{};
\path (0,0,3) node[circle, fill, inner sep=1.3]{};
\path (0,0,4) node[circle, fill, inner sep=1.3]{};

\path (0,1,0) node[circle, fill, inner sep=1.3]{};
\path (0,1,1) node[circle, fill, inner sep=1.3]{};
\path (0,1,2) node[circle, fill, inner sep=1.3]{};
\path (0,1,3) node[circle, fill, inner sep=1.3]{};

\path (0,2,0) node[circle, fill, color=red,inner sep=1.3]{};
\path (0,2,1) node[circle, fill, color=red,inner sep=1.3]{};
\path (0,2,2) node[circle, fill, color=red,inner sep=1.3]{};

\path (0,3,0) node[circle, fill, inner sep=1.3]{};
\path (0,3,1) node[circle, fill, inner sep=1.3]{};

\path (0,4,0) node[circle, fill, inner sep=1.3]{};

\path (1,0,0) node[circle, fill, inner sep=1.3]{};
\path (1,0,1) node[circle, fill, inner sep=1.3]{};
\path (1,0,2) node[circle, fill, inner sep=1.3]{};
\path (1,0,3) node[circle, fill, inner sep=1.3]{};

\path (1,1,0) node[circle, fill, color=red,inner sep=1.3]{};
\path (1,1,1) node[circle, fill, color=red,inner sep=1.3]{};
\path (1,1,2) node[circle, fill, color=red,inner sep=1.3]{};

\path (1,2,0) node[circle, fill, inner sep=1.3]{};
\path (1,2,1) node[circle, fill, inner sep=1.3]{};

\path (1,3,0) node[circle, fill, inner sep=1.3]{};

\path (2,0,0) node[circle, fill,color=red, inner sep=1.3]{};
\path (2,0,1) node[circle, fill, color=red,inner sep=1.3]{};
\path (2,0,2) node[circle, fill, color=red,inner sep=1.3]{};

\path (2,1,0) node[circle, fill, inner sep=1.3]{};
\path (2,1,1) node[circle, fill, inner sep=1.3]{};

\path (2,2,0) node[circle, fill, inner sep=1.3]{};

\path (3,0,0) node[circle, fill, inner sep=1.3]{};
\path (3,0,1) node[circle, fill, inner sep=1.3]{};

\path (3,1,0) node[circle, fill, inner sep=1.3]{};

\path (4,0,0) node[circle, fill, inner sep=1.3]{};

    \draw[ thick,->] (0,0,0) -- (1,0,0);
    \draw[ thick,->] (0,0,0) -- (0,1,0);
    \draw[ thick,->] (0,0,0) -- (0,0,1);    
        
   \draw[opacity=0.1] (4,0,0) -- (0,0,4) -- cycle;
       \draw[fill=gray,opacity=0.50] (0,4,0) -- (0,3,0) -- (0,3,1) -- cycle;
       \draw[fill=gray,opacity=0.50] (4,0,0) -- (3,0,0) -- (3,0,1) -- cycle;
       
    \draw[fill=gray,opacity=0.1] (0,0,0) -- (0,4,0) -- (0,0,4) -- cycle;

       \draw[fill=gray,opacity=0.1] (4,0,0) -- (1,0,0) -- (1,0,3) --(0,1,3) -- (0,1,0) -- (0,4,0) -- cycle;
        \draw[fill=gray,opacity=0.15] (4,0,0) -- (1,0,0) -- (1,0,3) --(0,1,3) -- (0,4,0) -- cycle;
        
            \draw[fill=gray,opacity=0.2] (4,0,0) -- (2,0,0) -- (2,0,2) --(0,2,2) -- (0,2,0) -- (0,4,0) -- cycle;
        \draw[fill=gray,opacity=0.20] (4,0,0) -- (2,0,0) -- (2,0,2) --(0,2,2) -- (0,4,0) -- cycle;

            \draw[fill=gray,opacity=0.20] (4,0,0) -- (3,0,0) -- (3,0,1) --(0,3,1) -- (0,3,0) -- (0,4,0) -- cycle;
        \draw[fill=gray,opacity=0.30] (4,0,0) -- (3,0,0) -- (3,0,1) --(0,3,1) -- (0,4,0) -- cycle;

            \draw[fill=red,opacity=0.4] (2,0,0) -- (0,2,0) -- (0,2,2) -- (2,0,2) -- cycle;      
       \path (0,0,4) node[circle, fill, inner sep=1.3]{};
        \path (0,0,3) node[circle, fill, inner sep=1.3]{};
  \end{tikzpicture}   
\caption{Degenerating a quartic surface to a chain of four rational varieties, two of which intersect along a bidegree $(2,2)$ curve.}
\label{fig:bideg2}
\end{figure}
\end{center}

 Every face $\delta\neq \delta_2$ that is not contained in the boundary of $\Delta$ has lattice width $1$; thus these faces are stably rational, by Example \ref{exam:prim}.
 The face $\delta_2$
 is the Newton polytope of a general hypersurface of bidegree $(2,2)$ in $\mathbb{P}^{\ell}_{k}\times_{k} \mathbb{P}^{\ell}_{k}$; thus it is stably irrational by the assumption in the statement. 
 Now it follows from Corollary \ref{coro:irratpol} that $\Delta$ is stably irrational, which means that a very general quartic of dimension $n$ is not stably rational.
\end{proof}

\begin{rema}
We can simplify the proof of Theorem \ref{theo:quartic2} in the case $n=2\ell$ by using a symmetry argument as in the proof of Theorem \ref{theo:quartic1}, only slicing $\Delta$ by $H_2$ and exploiting the unimodular involution
$$(u_0,\ldots,u_{n+1})\mapsto (u_{\ell+1},\ldots,u_{n+1},u_0,\ldots,u_\ell)$$ that preserves $\Delta$ and exchanges the two faces of maximal dimension in the subdivision.
However, this symmetry argument does not generalize to the case $n=2\ell+1$, which is why we have given the more general proof here.
\end{rema}

Theorem \ref{theo:quartic2} gives another proof of the result that a very general quartic fivefold is stably irrational: it was shown in \cite{HPTbideg2} that a very general bidegree $(2,2)$ hypersurface in $\mathbb{P}^2_k\times_k \mathbb{P}^3_k$ is stably irrational.

\subsection{Results in higher dimensions}
 The tropical degenerations appearing in the proofs of Theorems \ref{theo:quartic1} and \ref{theo:quartic2} in the case $d=4$ are  simple enough to write down explicit equations; see Example 4.3.2 in \cite{NO} for the case of  Theorem \ref{theo:quartic1}.
Using more sophisticated subdivisions, we can extend the proofs to give conditional results for hypersurfaces of higher degrees and dimensions (Proposition \ref{doublehypersurface}). 
 Here and further on in the paper, our tropical methods really pay off, as it would be extremely cumbersome to construct these degenerations by hand.

As input for our tropical degeneration, we first construct some suitable stably irrational lattice polytopes.

\begin{lemm}\label{subdivlemma2}
Let $\ell$ be a positive integer and let $d=2m$ be an even positive integer. Suppose that there exists a stably irrational double cover of $\PP^\ell_k$ branched along a smooth degree $d$ hypersurface.
Let $j$ be a positive integer such that $j\le m$. Let $a_1,\ldots,a_j$ be positive integers such that $a_1+\ldots+a_j=m$. Then the following lattice polytope is stably irrational:
$$
\Delta=\left\{(u,v)\in \R^{\ell+j}_{\geq 0}\,\,\big|\,\,\sum_{i=1}^\ell u_i +a_1v_1+\ldots+a_jv_j \le d  ,\, v_1,\ldots, v_j\in [0,2]\right\}.
$$
\end{lemm}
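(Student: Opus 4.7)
The plan is to apply Corollary \ref{coro:irratpol} with the regular integral polyhedral subdivision $\mathscr{P}$ of $\Delta$ obtained by slicing along the $j-1$ hyperplanes $v_i = 1$ for $i = 2, \ldots, j$; equivalently, $\mathscr{P}$ is induced by the convex piecewise linear function $\varphi(u,v) = \sum_{i=2}^{j} a_i|v_i - 1|$, whose values at lattice points lie in $\Z$. The maximal cells of $\mathscr{P}$ are indexed by $\varepsilon \in \{0,1\}^{j-1}$ via $C_{\varepsilon} = \Delta \cap \{v_i \in [\varepsilon_i, \varepsilon_i+1] : i = 2, \ldots, j\}$.

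First I would verify that every face of $\mathscr{P}$ not contained in $\partial\Delta$, other than the unique codimension-$(j-1)$ face $\delta_0 := \Delta \cap \{v_2 = v_3 = \cdots = v_j = 1\}$, is stably rational. Any such face has the form $\{v_i = 1 \text{ for } i \in T,\ v_k \in [\varepsilon_k, \varepsilon_k+1] \text{ for } k \in \{2,\ldots,j\}\setminus T\} \cap \Delta$ for some proper subset $T \subsetneq \{2,\ldots,j\}$ and some choice of $\varepsilon_k \in \{0,1\}$, so it retains lattice width $1$ in the direction of at least one $v_k$ with $k \in \{2,\ldots,j\}\setminus T$; Example \ref{exam:prim} then gives stable rationality.

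Using $\sum_{i\geq 2} a_i = m - a_1$ and $d = 2m$, I would then identify $\delta_0$ with the lattice polytope
\[
\{(u, v_1) \in \R^{\ell+1}_{\geq 0} : \textstyle\sum_i u_i + a_1 v_1 \leq m + a_1,\ v_1 \in [0,2]\}.
\]
A generic polynomial with this Newton polytope has the form $g = A(x) + B(x)y_1 + C(x)y_1^2$ with $\deg A \leq m + a_1$, $\deg B \leq m$, $\deg C \leq m - a_1$. Completing the square in $y_1$ realizes $Z^o(g)$ as birational to a double cover of $\mathbb{P}^\ell_k$ branched along the discriminant $B^2 - 4AC$, which is a smooth hypersurface of degree exactly $d$ for generic $(A,B,C)$. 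Combining the lemma's hypothesis with Corollary \ref{coro:verygen} applied to the full family of smooth degree-$d$ double covers of $\mathbb{P}^\ell_k$ would then yield, via Proposition \ref{prop:stabyratpol}, that $\delta_0$ is stably irrational; Corollary \ref{coro:irratpol} concludes that $\Delta$ is stably irrational.

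The hardest step is the stable irrationality of $\delta_0$. The natural map $(A,B,C) \mapsto B^2 - 4AC$ need not be dominant onto the full family of degree-$d$ hypersurfaces in $\mathbb{P}^\ell_k$, so one cannot immediately invoke Corollary \ref{coro:verygen} inside our sub-family. The delicate issue is to ensure that the image of the $(A,B,C)$-family in the parameter space of smooth degree-$d$ double covers is not entirely contained in any of the countably many proper closed subsets forming the stably rational locus; equivalently, one must exhibit a stably irrational specialization inside the sub-family. This can be handled either by a dimension-density argument (the image being a positive-dimensional irreducible subvariety, it cannot lie in a single proper closed subvariety of a stably rational family) or by directly building $(A,B,C)$ that realize the double cover postulated by the hypothesis.
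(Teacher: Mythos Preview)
Your subdivision of $\Delta$ by the hyperplanes $v_i=1$ for $i=2,\ldots,j$ is perfectly sound, and you correctly isolate $\delta_0=\Delta\cap\{v_2=\cdots=v_j=1\}$ as the only face that is not automatically stably rational. The genuine gap is the final step, and neither of your proposed fixes closes it.

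First, the density argument does not go through. Corollary~\ref{coro:verygen} says the stably rational locus in the full family of degree-$d$ double covers is a countable union $\bigcup_i Z_i$ of proper closed subsets. Your image $Y$ under $(A,B,C)\mapsto B^2-4AC$ is an irreducible constructible subset, but nothing prevents $Y$ from lying entirely inside one of the $Z_i$; irreducibility and positive dimension of $Y$ only tell you $Y$ is not covered by countably many \emph{proper-in-$Y$} closed subsets, which is not what you have.

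Second, the direct-construction argument needs dominance of $(A,B,C)\mapsto B^2-4AC$ (or at least that the given stably irrational branch locus lies in the image), and this fails in general. For $\ell=3$, $m=2$, $a_1=1$ one has
\[
\dim V_{3}+\dim V_{2}+\dim V_{1}=20+10+4=34<35=\dim V_{4},
\]
so the map cannot be dominant; the hypothesised stably irrational quartic double solid need not arise from your family.

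The paper avoids this by a different induction: instead of cutting all $v_i$ to $1$ at once, it performs a diagonal subdivision in only the last two coordinates $(v_{j-1},v_j)$, using the strips $Q_s=\{v_j+s-2\le v_{j-1}\le v_j+s-1\}$. All resulting interior faces have lattice width one except $\{v_{j-1}=v_j\}\cap\Delta$, which is lattice-isomorphic to the polytope of the \emph{same shape} with $j$ replaced by $j-1$ and $a_{j-1}$ replaced by $a_{j-1}+a_j$. The constraint $\sum a_i=m$ is preserved, so the induction reaches $j=1$ with $a_1=m$, where $\delta_0$ is exactly the Newton polytope of a generic degree-$d$ double cover of $\mathbb{P}^\ell_k$ and the hypothesis applies directly. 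Your approach, by contrast, lands on a ``$j=1$'' polytope with $a_1<m$, which is not of the form covered by the lemma and whose stable irrationality is not a consequence of the hypothesis.
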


 \tdplotsetmaincoords{75}{125}
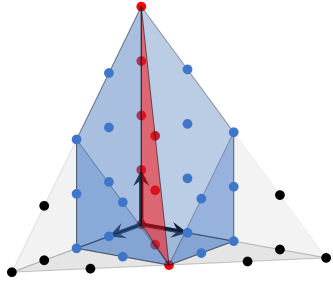
\begin{figure}[h!]
    \centering
  \begin{tikzpicture}[%
    tdplot_main_coords,
    scale=0.75,
    >=stealth
  ]   
  \path (0,0,0) node[circle, color=red, fill, inner sep=1.3]{};
\path (0,0,1) node[circle, color=red, fill, inner sep=1.3]{};
\path (0,0,2) node[circle, color=red, fill, inner sep=1.3]{};
\path (0,0,3) node[circle, color=red,fill, inner sep=1.3]{};
\path (0,0,4) node[circle, color=red, fill, inner sep=1.3]{};
\path (1,1,0) node[circle, fill,  color=red,inner sep=1.3]{};
\path (1,1,1) node[circle, fill,  color=red,inner sep=1.3]{};
\path (1,1,2) node[circle, fill,  color=red,inner sep=1.3]{};
\path (2,2,0) node[circle, fill, color=red, inner sep=1.3]{};

      \draw[ultra thick,->] (0,0,0) -- (1,0,0);
    \draw[ultra thick,->] (0,0,0) -- (0,1,0);
    \draw[ultra thick,->] (0,0,0) -- (0,0,1);    
    \draw[fill=gray,opacity=0.2] (0,0,0) -- (0,4,0) -- (4,0,0) -- cycle;
    \draw[fill=gray,opacity=0.1] (0,0,0) -- (0,0,4) -- (4,0,0) -- cycle;
    \draw[fill=black,opacity=0.05] (0,0,0) -- (0,4,0) -- (0,0,4) -- cycle;

         \draw[fill=mycol,opacity=0.25] (2,0,0) -- (2,0,2) -- (0,0,4) -- (0,0,0) --cycle;   
    \draw[fill=mycol,opacity=0.4] (2,0,0) -- (2,2,0) -- (0,2,0) -- (0,0,0) --cycle;      

   \draw[fill=mycol,opacity=0.3] (2,0,0)  -- (2,0,2) -- (2,2,0) --cycle;        
                \draw[fill=mycol,opacity=0.3] (2,2,0)  -- (0,2,2) -- (0,2,0) --cycle;

 
      \draw[fill=mycol,opacity=0.01] (0,0,4)-- (2,0,2)  -- (2,2,0) -- (0,2,2) --cycle;     
       \draw[fill=mycol,opacity=0.2] (0,0,4)-- (0,0,0)  -- (2,0,0) -- (2,0,2) --cycle;      
         \draw[fill=mycol,opacity=0.3] (0,0,0)-- (0,0,4)  -- (0,2,2) -- (0,2,0) --cycle;     

\path (0,1,0) node[circle, color=mycol, fill, inner sep=1.3]{};
\path (0,1,1) node[circle,  color=mycol,fill, inner sep=1.3]{};
\path (0,1,2) node[circle,  color=mycol,fill, inner sep=1.3]{};
\path (0,1,3) node[circle, color=mycol, fill, inner sep=1.3]{};

\path (0,2,0) node[circle, color=mycol, fill, inner sep=1.3]{};
\path (0,2,1) node[circle,  color=mycol,fill, inner sep=1.3]{};
\path (0,2,2) node[circle, color=mycol, fill, inner sep=1.3]{};

\path (0,3,0) node[circle, fill, inner sep=1.3]{};
\path (0,3,1) node[circle, fill, inner sep=1.3]{};

\path (0,4,0) node[circle, fill, inner sep=1.3]{};

\path (1,0,0) node[circle, color=mycol, fill, inner sep=1.3]{};
\path (1,0,1) node[circle,  color=mycol,fill, inner sep=1.3]{};
\path (1,0,2) node[circle,  color=mycol,fill, inner sep=1.3]{};
\path (1,0,3) node[circle, color=mycol, fill, inner sep=1.3]{};

\path (1,2,0) node[circle, fill, color=mycol, inner sep=1.3]{};
\path (1,2,1) node[circle, fill, color=mycol, inner sep=1.3]{};

\path (1,3,0) node[circle, fill, inner sep=1.3]{};

\path (2,0,0) node[circle, fill,  color=mycol,inner sep=1.3]{};
\path (2,0,1) node[circle, fill,  color=mycol,inner sep=1.3]{};
\path (2,0,2) node[circle, fill,  color=mycol,inner sep=1.3]{};

\path (2,1,0) node[circle, fill,  color=mycol,inner sep=1.3]{};
\path (2,1,1) node[circle, fill,  color=mycol,inner sep=1.3]{};

\path (3,0,0) node[circle, fill, inner sep=1.3]{};
\path (3,0,1) node[circle, fill, inner sep=1.3]{};

\path (3,1,0) node[circle, fill, inner sep=1.3]{};

\path (4,0,0) node[circle, fill, inner sep=1.3]{};

    \draw[fill=red,opacity=0.5] (0,0,0) -- (2,2,0) -- (0,0,4) -- cycle;
  \end{tikzpicture}   
    \caption{The polytope $\Delta$ and its stably irrational subpolytope}
    \label{fig:my_label}
\end{figure}

\begin{proof}
Let us first show that $\Delta$ is in fact a lattice polytope. Consider the projection $p\colon \Delta\to [0,2]^j$ onto the the $(v_1,\ldots,v_j)$-plane. Given $(v_1,\ldots,v_j)\in [0,2]^j$, we have $a_1v_1+\ldots+a_jv_j\le 2a_1+\ldots+2a_j= 2m=d$. This implies that $p$ is surjective and if $(u,v)\in \Delta$ is a vertex, we must have that $v=p(u,v)$ is a vertex of $[0,2]^j$, hence a lattice point. However, for a lattice point $v\in [0,2]^j$, the preimage $p^{-1}(v)$ is a lattice polytope (a dilated simplex). Hence every vertex $(u,v)$ of $\Delta$ is a lattice point, and $\Delta$ is a lattice polytope.

We now prove the claim in the lemma by induction on $j$. If $j=1$, then $a_1=m$ and $\Delta$ is exactly the Newton polytope of a  double cover of $\PP^\ell_k$
branched along a general degree $d$ hypersurface, which is stably irrational by our assumption and Corollary \ref{coro:verygen}. Thus we may assume that $j>1$ and that the result holds for strictly smaller values of $j$. By Corollary \ref{coro:irratpol}, it suffices to construct a regular polyhedral subdivision $\mathscr{P}$ of $\Delta$ such that exactly one face of $\mathscr{P}$ that meets the relative interior of $\Delta$ is stably irrational.

 Let $Q=\{0\le v_{j-1},v_{j} \le 2 \}\subset \RR^2$, and let $\pi\colon \Delta\to Q$ be the projection map. Consider the  regular subdivision  of $Q$ with four maximal polytopes $Q_0,Q_1,Q_2,Q_3$, where $$Q_s=\{v_{j}+s-2\le v_{j-1}\le v_{j}+s-1\}$$ for $s=0,1,2,3$. This subdivision is shown in Figure \ref{fig:Qsubdiv}.
  \begin{figure}[h!]
  \begin{tikzpicture}[scale=0.6]
  \definecolor{mycolor}{RGB}{240,240,240}
          \draw[ thick,fill=mycolor] (0,0) -- (0,4) -- (4,4) -- (4,0) -- cycle;
\path (0,0) node[circle, fill, inner sep=1.5]{};
\path (0,2) node[circle, fill, inner sep=1.5]{};
\path (0,4) node[circle, fill, inner sep=1.5]{};

\path (2,0) node[circle, fill, inner sep=1.5]{};
\path (2,2) node[circle, fill, inner sep=1.5]{};
\path (2,4) node[circle, fill, inner sep=1.5]{};

\path (4,0) node[circle, fill, inner sep=1.5]{};
\path (4,2) node[circle, fill, inner sep=1.5]{};
\path (4,4) node[circle, fill, inner sep=1.5]{};
   
    \draw[ thick,-] (0,0) -- (4,0) -- (4,4) --(0,4) -- cycle;
    \draw[ thick,-] (2,0) -- (4,2) -- cycle;
    \draw[ thick,-] (0,2) -- (2,4) -- cycle;
    \draw[ thick,-] (0,0) -- (4,4) -- cycle;
\node at (3.5,0.66) {$Q_0$};
\node at (2.5,1.5) {$Q_1$};
\node at (1.5,2.5) {$Q_2$};
\node at (0.66,3.5) {$Q_3$};
  \end{tikzpicture}\caption{The subdivision of $Q$}
   \label{fig:Qsubdiv}
  \end{figure}
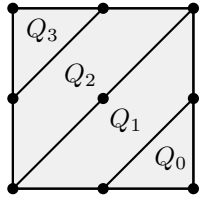
  
We set $Q_{s,s+1}=Q_s\cap Q_{s+1}$ for $s\in \{0,1,2\}$.  The faces not contained in the boundary of $Q$ are the following:

\begin{itemize}
\item codim 0: $Q_0,Q_1,Q_2,Q_3$. All have lattice width one.
\item codim 1: $Q_{01},Q_{12},Q_{23}$. All have lattice width one, except $Q_{12}=\{v_{j-1}=v_j\}\cap Q$.
\end{itemize}Taking preimages under the map $\pi$, this induces a regular subdivision $\mathscr{P}$ of $\Delta$ such that each face that meets the relative interior of $\Delta$ has lattice width one, except for  $\{v_{j-1}=v_{j}\}\cap \Delta$. The latter polytope is isomorphic to
$$
\left\{\sum_{i=1}^\ell u_i +a_1v_1+\ldots+(a_{j-1}+a_j)v_{j-1} \le d \, , v_1,\ldots, v_{j-1}\in [0,2]\right\}\subset \RR^{\ell+j-1}_{\ge 0}.
$$By the induction hypothesis, this polytope is stably irrational, so the proof is complete.
\end{proof}

\begin{lemm}\label{subdivlemma1}
Let $\ell$ be a positive integer and let $d=2m$ be an even positive integer. Suppose that there exists a stably irrational double cover of $\PP^\ell_k$ branched along a smooth degree $d$ hypersurface. Then for any $0\le j\le m$, the following polytope is stably irrational:
$$
\Gamma_j=\left\{(u,v)\in \R_{\geq 0}^{\ell+m}\,|\,\sum_{i=1}^\ell u_i +v_1+\ldots+v_m \le d \, ,\, v_1,\ldots, v_j\in [0,2]\right\}.
$$
\end{lemm}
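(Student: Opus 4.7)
The plan is to prove the lemma by induction on $m-j$, descending from $m-j=0$ down to $m-j=m$. The base case $j=m$ is exactly the conclusion of Lemma \ref{subdivlemma2} applied with $a_1=\ldots=a_m=1$, whose sum equals $m$ as required.

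For the inductive step, assume $\Gamma_{j+1}$ is stably irrational and show the same for $\Gamma_{j}$. I would apply Corollary \ref{coro:irratpol}, so the aim is to construct a regular integral polyhedral subdivision $\mathscr{P}$ of $\Gamma_j$ whose unique stably irrational interior face is (a face affinely equivalent to) $\Gamma_{j+1}$, all other interior faces being stably rational. The natural candidate subdivision is induced by the convex piecewise linear function $\varphi(u,v)=\sum_{s=2}^{d-1}\max(v_{j+1}-s,0)$: the maximal cells are the polytope $\Gamma_{j+1}=\Gamma_j\cap\{v_{j+1}\le 2\}$ together with the strips $S_s=\Gamma_j\cap\{s\le v_{j+1}\le s+1\}$ for $s=2,\ldots,d-1$, and the codimension-one interior faces are $G_s=\Gamma_j\cap\{v_{j+1}=s\}$ for $s=2,\ldots,d-1$. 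Each strip $S_s$ has lattice width one in the $v_{j+1}$ direction, hence is stably rational by Example \ref{exam:prim}.

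The heart of the argument is the analysis of the codimension-one faces $G_s$. After translation, $G_s$ is affinely equivalent to $\{\sum_i u_i+v_1+\ldots+v_j+v_{j+2}+\ldots+v_m\le d-s,\,v_1,\ldots,v_j\in[0,2]\}$, a polytope of the same type as $\Gamma_j$ but with $d$ replaced by $d-s$ and one fewer total $v$-variable. For $s\ge d-2$ the simplex constraint $\sum\le d-s\le 2$ automatically enforces $v_i\le 2$, so $G_s$ is a $(d-s)$-dilated simplex and therefore stably rational. For smaller $s$, I would argue stable rationality of $G_s$ by iterating the same strip-subdivision technique in one of the remaining unbounded $v$-coordinates $v_{j+2},\ldots,v_m$ of $G_s$, recursing on the total simplex size and the number of variables until everything reduces to lattice-width-one pieces or to simplices of degree at most $2$.

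The main obstacle is precisely the control of the face $G_2$: here $d-s=d-2=2(m-1)$, so $G_2$ is again a polytope of Lemma \ref{subdivlemma1}'s form, but attached to the \emph{smaller} parameter $d'=2(m-1)$, for which we have no stably irrational double cover hypothesis. To avoid appealing to an assumption we do not have, the strategy is to further subdivide $G_2$ by $\{v_{j+2}=2\}$, peel off a width-one strip, and continue recursively; since each recursion strictly decreases either the simplex size $d-s$ or the number of unbounded $v$-coordinates, the process terminates with all interior cells either of lattice width one or a small-degree simplex, each stably rational. With all interior faces other than $\Gamma_{j+1}$ shown to be stably rational and $\Gamma_{j+1}$ stably irrational by the inductive hypothesis, Corollary \ref{coro:irratpol} applies and concludes that $\Gamma_j$ is stably irrational, completing the induction.
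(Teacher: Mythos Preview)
Your induction setup and base case are fine, and the idea of cutting $\Gamma_j$ along level sets of $v_{j+1}$ is the right one. The gap is in your treatment of the codimension-one interior faces $G_s$. You assert that each $G_s$ with $s<d-2$ can be shown to be \emph{stably rational} by recursively subdividing it into lattice-width-one pieces and small simplices. This is false in general, and the proposed argument conflates two different things: admitting a subdivision into stably rational cells only shows that the tropical obstruction for $G_s$ vanishes, not that a very general hypersurface with Newton polytope $G_s$ is itself stably rational.

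A concrete counterexample: take $\ell=1$, $m=3$, $d=6$ (the hypothesis holds since a double cover of $\PP^1$ branched at $6$ points is a genus-$2$ curve). In your step $j=0$, the face $G_2=\Gamma_0\cap\{v_1=2\}$ is isomorphic to $4\Delta_3$, the Newton polytope of a quartic surface in $\PP^3$; this is a K3 surface, hence stably irrational. So $G_2$ is a stably irrational polytope and Corollary~\ref{coro:irratpol} cannot be applied with your subdivision. Moreover, if you try to refine the subdivision further (say by cutting $G_2$ with $\{v_2=2\}$), you are forced to refine the adjacent maximal cell $\Gamma_{j+1}$ as well in order to keep a valid polyhedral subdivision, and the resulting pieces again include stably irrational polytopes of the same shape (e.g.\ $\Gamma_1$ for $\ell=1,m=2,d=4$). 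The recursion does not bottom out in stably rational cells.

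The paper avoids this problem by not attempting to prove the codimension-one face is stably rational at all. It uses only a single cut at $v_{j+1}=2$, obtaining interior faces $\gamma_1=\Gamma_{j+1}$, $\gamma_2$, and $\gamma_{12}=\gamma_1\cap\gamma_2$, and then applies Theorem~\ref{theo:main} directly rather than Corollary~\ref{coro:irratpol}. Since $\Z[\SB_k]$ is free on stable birational classes, the equation $\sbir{Z(g_{\gamma_1})}+\sbir{Z(g_{\gamma_2})}-\sbir{Z(g_{\gamma_{12}})}=\sbir{\Spec k}$ would force $\sbir{Z(g_{\gamma_1})}=\sbir{Z(g_{\gamma_{12}})}$ (as $\gamma_1$ is stably irrational). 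This is ruled out using Theorem~\ref{theo:variation}\eqref{it:varyface}: one further subdivides $\gamma_1$ at $v_{j+1}=1$ into two lattice-width-one cells whose intersection admits a unimodular triangulation, and observes that $\gamma_{12}$ lies in $\partial\gamma_1$ and meets only the stably rational cell $\{1\le v_{j+1}\le 2\}$. The variation theorem is the missing ingredient in your argument.
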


\begin{proof}
We prove this using downward induction on $j$.  The base case $j=m$ follows from Lemma \ref{subdivlemma2} above. So assume that $j\leq m-1$ and that the result holds for all strictly larger values of $j$.

Consider the regular polyhedral subdivision  $\mathscr P$ of $\Gamma_{j}$ with maximal faces 
\begin{itemize}
    \item $\gamma_1=\Gamma_{j}\cap \{v_j\le 2\}$, which equals $\Gamma_{j+1}$;
    \item $\gamma_2=\Gamma_{j}\cap \{v_j\ge 2\}$.
    \end{itemize}
The faces of $\mathscr{P}$ that meet the relative interior of $\Gamma_j$ are $\gamma_1$, $\gamma_2$ and $\gamma_{12}=\gamma_1\cap \gamma_2$. By the induction hypothesis, the face $\gamma_1=\Gamma_{j+1}$ is stably irrational. 
 We will use Theorem \ref{theo:variation} to argue that a very general hypersurface with Newton polytope $\gamma_{1}$  is not stably birational to a very general hypersurface with Newton polytope $\gamma_{12}$. This in turn implies that $\Gamma_{j}$ is stably irrational by Theorem \ref{theo:main}.

 We consider the subdivision  of $\gamma_1$ with maximal faces $\delta_1=\gamma_1\cap \{v_j\le 1\}$ and $\delta_2=\gamma_1\cap \{v_j\ge 1\}$. These have lattice width one, and therefore are stably rational. Their intersection $\delta_1\cap \delta_2=\gamma_1\cap \{v_j=1\}$ admits a unimodular regular subdivision. Hence the conditions of Theorem \ref{theo:variation}\eqref{it:varyface} are satisfied, which allows us to conclude that $\Gamma_{j}$ is stably irrational. \end{proof}

 \tdplotsetmaincoords{75}{135}
\begin{center}
\begin{figure}[ht]
\centering
   \begin{tikzpicture}[%
    tdplot_main_coords,
    scale=0.75,
    >=stealth
  ]   
 
    \draw[fill=gray,opacity=0.3] (2,0,0) -- (2,2,0) -- (4,0,0) -- cycle;
    \draw[fill=gray,opacity=0.07] (0,0,0) -- (0,0,4) -- (4,0,0) -- cycle;

     \draw[fill=gray,opacity=0.3] (0,4,0) -- (0,0,0) -- (4,0,0) -- cycle;

    \draw[fill=mycol,opacity=0.4] (2,0,0) -- (2,2,0) -- (0,2,0) -- (0,0,0) --cycle;      
   \draw[fill=mycol,opacity=0.45] (2,0,0)  -- (2,0,2) -- (2,2,0) --cycle;        
         
      \draw[fill=mycol,opacity=0.2] (0,0,4)-- (2,0,2)  -- (2,2,0) -- (0,2,2) --cycle;     
       \draw[fill=mycol,opacity=0.2] (0,0,4)-- (0,0,0)  -- (2,0,0) -- (2,0,2) --cycle;      
         \draw[fill=mycol,opacity=0.35] (0,0,0)-- (0,0,4)  -- (0,2,2) -- (0,2,0) --cycle;     
                
\draw[fill=lime!70!black,opacity=0.5] (2,2,0)  -- (0,2,0) -- (0,4,0) --cycle; 
\draw[fill=lime!90!black,opacity=0.3] (0,2,0)  -- (0,2,2) -- (0,4,0) --cycle; 
\draw[fill=lime!90!black,opacity=0.3] (0,2,0)  -- (0,2,2) -- (0,4,0) --cycle; 
\draw[fill=lime!90!black,opacity=0.6] (2,2,0)  -- (0,2,2) -- (0,2,0) --cycle;  


               \draw[ultra thick,->,color=black!20!mycol] (0,0,0) -- (1,0,0);
    \draw[ultra thick,->,color=black!20!mycol] (0,0,0) -- (0,1,0);
    \draw[ultra thick,->,color=black!20!mycol] (0,0,0) -- (0,0,1);   
    
\path (0,0,0) node[circle, color=mycol, fill, inner sep=1.3]{};
\path (0,0,1) node[circle, color=mycol, fill, inner sep=1.3]{};
\path (0,0,2) node[circle, color=mycol, fill, inner sep=1.3]{};
\path (0,0,3) node[circle, color=mycol,fill, inner sep=1.3]{};
\path (0,0,4) node[circle, color=mycol, fill, inner sep=1.3]{};

\path (0,1,0) node[circle, color=mycol, fill, inner sep=1.3]{};
\path (0,1,1) node[circle,  color=mycol,fill, inner sep=1.3]{};
\path (0,1,2) node[circle,  color=mycol,fill, inner sep=1.3]{};
\path (0,1,3) node[circle, color=mycol, fill, inner sep=1.3]{};

\path (0,2,0) node[circle, color=mycol2, fill, inner sep=1.3]{};
\path (0,2,1) node[circle,  color=mycol2,fill, inner sep=1.3]{};
\path (0,2,2) node[circle, color=mycol2, fill, inner sep=1.3]{};

\path (0,3,0) node[circle, fill, color=mycol2, inner sep=1.3]{};
\path (0,3,1) node[circle, fill, color=mycol2, inner sep=1.3]{};

\path (0,4,0) node[circle, fill, color=mycol2, inner sep=1.3]{};

\path (1,0,0) node[circle, color=mycol, fill, inner sep=1.3]{};
\path (1,0,1) node[circle,  color=mycol,fill, inner sep=1.3]{};
\path (1,0,2) node[circle,  color=mycol,fill, inner sep=1.3]{};
\path (1,0,3) node[circle, color=mycol, fill, inner sep=1.3]{};

\path (1,1,0) node[circle, fill,  color=mycol,inner sep=1.3]{};
\path (1,1,1) node[circle, fill,  color=mycol,inner sep=1.3]{};
\path (1,1,2) node[circle, fill,  color=mycol,inner sep=1.3]{};

\path (1,2,0) node[circle, fill, color=mycol2, inner sep=1.3]{};
\path (1,2,1) node[circle, fill, color=mycol2, inner sep=1.3]{};

\path (1,3,0) node[circle, fill,color=mycol2,  inner sep=1.3]{};

\path (2,0,0) node[circle, fill,  color=mycol,inner sep=1.3]{};
\path (2,0,1) node[circle, fill,  color=mycol,inner sep=1.3]{};
\path (2,0,2) node[circle, fill,  color=mycol,inner sep=1.3]{};

\path (2,1,0) node[circle, fill,  color=mycol,inner sep=1.3]{};
\path (2,1,1) node[circle, fill,  color=mycol,inner sep=1.3]{};

\path (2,2,0) node[circle, fill, color=mycol, inner sep=1.3]{};

\path (3,0,0) node[circle, fill, inner sep=1.3]{};
\path (3,0,1) node[circle, fill, inner sep=1.3]{};

\path (3,1,0) node[circle, fill, inner sep=1.3]{};

\path (4,0,0) node[circle, fill, inner sep=1.3]{};
  \end{tikzpicture}
      \caption{Subdivision of the polytope $\Gamma_j$}
    \label{fig:polytopeK}
\end{figure}
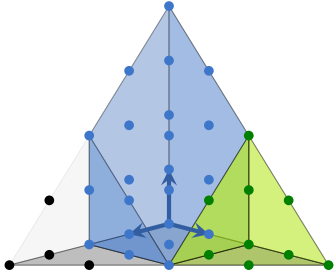
\end{center}

 \begin{prop}\label{doublehypersurface}
Let $\ell$ be a positive integer and let $d=2m$ be an even positive integer. Suppose that there exists a stably irrational double cover of $\PP^\ell_k$ branched along a smooth degree $d$ hypersurface. Then a very general hypersurface of degree $d$ in $\mathbb P^{\ell+m}_k$ is stably irrational.
\end{prop}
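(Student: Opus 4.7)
The plan is to deduce the proposition as an immediate corollary of Lemma \ref{subdivlemma1} applied with $j = 0$. When $j = 0$, no constraints of the form $v_i \in [0,2]$ appear in the definition of $\Gamma_j$, and the polytope simplifies to
$$
\Gamma_0 = \left\{(u,v) \in \R_{\geq 0}^{\ell+m} \,\,\big|\,\, \sum_{i=1}^\ell u_i + \sum_{k=1}^m v_k \leq d\right\},
$$
which coincides with the dilated unimodular simplex $d\Delta_{\ell+m}$. By Example \ref{exam:hypersurf}, polynomials with Newton polytope $d\Delta_{\ell+m}$ parameterise a dense Zariski open subset of the space of degree-$d$ hypersurfaces in $\mathbb{P}^{\ell+m}_k$, so the stable irrationality of $\Gamma_0$ asserted by Lemma \ref{subdivlemma1} translates directly into the stable irrationality of a very general degree-$d$ hypersurface in $\mathbb{P}^{\ell+m}_k$.

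The substantive content of the argument is already contained in the two preceding lemmas. Lemma \ref{subdivlemma2} handles the base case $j = m$ of the downward induction, specialised at $a_1 = \cdots = a_m = 1$, by producing a regular subdivision whose only face meeting the relative interior that is not of lattice width one is isomorphic to the Newton polytope of a double cover of $\mathbb{P}^\ell_k$ branched along a degree-$d$ hypersurface -- precisely the object whose stable irrationality is hypothesised in the proposition. The downward induction in Lemma \ref{subdivlemma1} then propagates stable irrationality from $\Gamma_{j+1}$ to $\Gamma_j$ by slicing along a hyperplane of the form $\{v_{j+1} = 2\}$, controlling the resulting boundary stratum via Theorem \ref{theo:variation} so that no cancellation occurs in the alternating sum of Theorem \ref{theo:main}.

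Because the heavy technical work is already completed in those lemmas, there is no genuinely new obstacle here: the proof amounts to recognising the identification $\Gamma_0 = d\Delta_{\ell+m}$, matching the hypothesis of the proposition with that of Lemma \ref{subdivlemma1}, and invoking the latter. The only subtlety worth flagging is the compatibility of the notion of stable irrationality for polytopes (Definition \ref{defi:stablyratpol}) with the classical notion for projective hypersurfaces, which is exactly what Example \ref{exam:hypersurf} supplies.
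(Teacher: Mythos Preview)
Your proposal is correct and follows exactly the same approach as the paper: identify the Newton polytope of a general degree-$d$ hypersurface in $\mathbb{P}^{\ell+m}_k$ with the dilated simplex $d\Delta_{\ell+m}=\Gamma_0$, and then invoke Lemma \ref{subdivlemma1} with $j=0$. Your additional exposition (via Example \ref{exam:hypersurf} and the summary of the inductive mechanism) is accurate but not required for the argument.
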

\begin{proof}
The Newton polytope of such a hypersurface is given by the dilated simplex
$$d\Delta_{\ell+m}=\left\{(u,v)\in \R_{\geq 0}^{\ell+m}\,|\, \sum_{i=1}^\ell u_i + v_1+\ldots+v_m \le d\right\}.
$$ The stable irrationality of $d\Delta_{\ell+m}$ follows by taking $j=0$ in Lemma \ref{subdivlemma1}.
\end{proof}

To apply this result, one needs the stable irrationality of double covers of lower dimension as input. This input typically requires additional arguments, e.g., further specializations and the decomposition of the diagonal-technique. With the current knowledge of rationality properties of double covers, Proposition \ref{doublehypersurface} unfortunately does not yield any cases that have not already been established by other methods, but we expect that it will be useful for future applications.

\section{Hypersurfaces in products of projective spaces}\label{sec:prodproj}
We can also apply our methods to the rationality problem for hypersurfaces in products of projective spaces. Like hypersurfaces in projective space, these have been extensively studied \cite{pukhlikov-paper,sobolev,pukhlikov-book, ABP,krylovokada,ABBP}. We will classify all bidegrees corresponding to stably rational/irrational hypersurfaces of dimension at most four, and also settle many open cases in higher dimension. 

The arguments here (in particular in Theorem \ref{P1P4}) utilize the full tropical machinery in Theorem \ref{theo:main}. In particular, the degenerations we use are significantly more involved than the ones considered in Section \ref{quarticfivefoldsection}, and have special fibers with large numbers of irreducible components.


\begin{prop}\label{prop:productproj}
Let $X$ be very general divisor of bidegree $(a,b)$ in $\PP^{\ell}_k\times_k \PP^m_k$, with $\ell,m\ge 2$ and $a,b\ge 1$.
\begin{enumerate}
\item \label{it:product1} If $a=1$ or $b=1$, then $X$ is rational.

\item \label{it:product2} If $\ell=2$ and $m\in \{2,3\}$, then $X$ is stably irrational if $a,b\geq 2$.


\item \label{it:product3} If $\ell=m=3$, then $X$ is stably irrational if $a>2$ or $b>2$.

\item \label{it:product4} If $\ell=2$ and $m\ge 3$, then $X$ is stably irrational for $b\ge m-1$.

\end{enumerate}
\end{prop}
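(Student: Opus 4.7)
The plan breaks into four parts matching the statement. Part (1) is elementary: if $a = 1$, the defining polynomial has the form $F = \sum_{i=0}^{\ell} x_i\, g_i(y_0,\ldots,y_m)$ with each $g_i$ homogeneous of degree $b$, and the second projection $X \to \PP^m_k$ realizes $X$ birationally as a $\PP^{\ell-1}$-bundle over $\PP^m_k$, hence as a rational variety. The case $b = 1$ is symmetric.

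For parts (2)--(4), the strategy is to reduce each claim to one of two base cases via Theorem \ref{theo:slope2}: the bidegree $(2,2)$ threefold in $\PP^2_k \times_k \PP^2_k$ and the bidegree $(2,2)$ fourfold in $\PP^2_k \times_k \PP^3_k$, both stably irrational by Hassett--Pirutka--Tschinkel (the second case appearing as Example \ref{exam:bideg2}). For part (2), I would apply Theorem \ref{theo:slope2} with $\ell = \ell' = 2$ and $m = m' \in \{2,3\}$, starting from the corresponding $(2,2)$ base case; the inequalities $a \geq 2$ and $b \geq 2$ are precisely the conditions $d' \geq d + \ell' - \ell$ and $e' \geq e + m' - m$, so the theorem gives stable irrationality of a very general $(a,b)$-hypersurface in $\PP^2_k \times_k \PP^{m'}_k$ for all $a, b \geq 2$.

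For part (3), I would apply Theorem \ref{theo:slope2} to the $(2,2)$-fourfold in $\PP^2_k \times_k \PP^3_k$ with $\ell' = m' = 3$: the hypotheses read $a \geq 3$ and $b \geq 2$, giving stable irrationality on that range. The range $a \geq 2$, $b \geq 3$ then follows from the same theorem applied to the $(2,2)$-fourfold in $\PP^3_k \times_k \PP^2_k$ (the same variety after swapping the two factors). Together with part (1), which already covers $a = 1$ and $b = 1$, this handles the entire claimed range $a > 2$ or $b > 2$. For part (4), applying Theorem \ref{theo:slope2} to the $(2,2)$-fourfold in $\PP^2_k \times_k \PP^3_k$ with $\ell' = 2$ and $m' = m$ yields $a \geq 2$ and $b \geq m - 1$, and the remaining case $a = 1$ is again covered by part (1).

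There is no genuine obstacle here, since the heavy machinery has been developed in Theorem \ref{theo:slope2} and in the cited HPT examples: the proof amounts to a careful bookkeeping of the numerical inequalities in Theorem \ref{theo:slope2} applied to well-chosen base cases, together with the elementary projection argument for part (1).
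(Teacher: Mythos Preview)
Your proof is correct and takes essentially the same approach as the paper, reducing parts (2)--(4) to the known $(2,2)$ base cases via Theorem \ref{theo:slope2}. Two small corrections: the $\PP^2_k\times_k\PP^2_k$ base case is due to Hassett--Tschinkel \cite{HT} rather than Hassett--Pirutka--Tschinkel, and your remark that part (1) ``covers'' the case $a=1$ in (3) and (4) is backwards---part (1) gives \emph{rationality} there, so the claims in (3) and (4) must be read as implicitly restricted to $a,b\geq 2$, which is exactly the range your application of Theorem \ref{theo:slope2} establishes.
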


\begin{proof} \eqref{it:product1} We may assume that $a=1$. Then $X$ is generically a projective bundle over $\mathbb{P}_k^m$, and therefore rational.

\eqref{it:product2} If $(a,b)=(2,2)$, then $X$ was shown to be stably irrational in \cite{HT} (for $m=2$) and \cite{HPTbideg2} (for $m=3$). The result for $a,b\geq 2$ now follows from Theorem \ref{theo:slope2}.

\eqref{it:product3} This follows from  Theorem \ref{theo:slope2} and point \eqref{it:product2}.

\eqref{it:product4} This follows  by induction on $m$ by Theorem \ref{theo:slope2}, using point \eqref{it:product2} as the base case. 
\end{proof}

\begin{prop}\label{prop:productproj2}
Let $a,b,\ell,m$ be positive integers such that $a\ge \ell+1$. If a very general hypersurface in $\PP^m_k$ of degree $b$ is stably irrational, then also a very general $(a,b)$-divisor $X$ in $\PP^\ell_k\times_k \PP^m_k$ is stably irrational.
\end{prop}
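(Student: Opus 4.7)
The plan is to reduce to the base case $\ell = 1$, $a = 2$ via Theorem~\ref{theo:slope2}, and then handle that base case directly with a one-slice tropical degeneration.

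For the reduction, suppose we have established that a very general bidegree $(2, b)$ divisor in $\PP^1_k \times_k \PP^m_k$ is stably irrational. Apply Theorem~\ref{theo:slope2} to the initial data $(\ell_0, m_0, d_0, e_0) = (1, m, 2, b)$ with target parameters $(\ell, m, a, b)$: the condition $\ell \geq 1$ is automatic, $m \geq m$ is trivial, the inequality $a \geq d_0 + \ell - \ell_0 = \ell + 1$ is precisely the hypothesis of the proposition, and $b \geq b + m - m$ is vacuous. Hence the proposition follows immediately from the base case.

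For the base case, consider the Newton polytope
$$\Delta = [0, 2] \times b\Delta_m \subset \R^{1+m}$$
of a general bidegree $(2, b)$ hypersurface in $\PP^1_k \times_k \PP^m_k$. Slice $\Delta$ along the hyperplane $\{u_1 = 1\}$ to produce a regular integral polyhedral subdivision $\mathscr{P}$ with maximal cells $\tau_1 = [0,1] \times b\Delta_m$ and $\tau_2 = [1,2] \times b\Delta_m$ meeting in the central face $\sigma = \{1\} \times b\Delta_m$. Regularity is witnessed by the convex piecewise linear function $(u_1, v) \mapsto |u_1 - 1|$, and integrality is clear. The only faces of $\mathscr{P}$ that meet the relative interior of $\Delta$ are $\tau_1$, $\tau_2$, and $\sigma$: every other face either has $u_1 \in \{0, 2\}$ or lies in $[0,2]$ times a proper face of $b\Delta_m$, and therefore lies in $\partial \Delta$. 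The maximal cells $\tau_1$ and $\tau_2$ both have lattice width $1$ in the $u_1$-direction, so they are stably rational by Example~\ref{exam:prim}. The central face $\sigma$ is unimodularly equivalent to $b\Delta_m$, which is stably irrational by the hypothesis that a very general degree $b$ hypersurface in $\PP^m_k$ is stably irrational. Thus $\sigma$ is the unique interior face of $\mathscr{P}$ that is stably irrational, and Corollary~\ref{coro:irratpol} yields that $\Delta$ itself is stably irrational, establishing the base case.

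There is no real obstacle here; the simplification comes from choosing the subdivision so that the single ``nontrivial'' interior face $\sigma$ reproduces the Newton polytope of the assumed stably irrational hypersurface in $\PP^m_k$, while all other interior faces have lattice width $1$ and hence contribute only trivial classes. This allows us to invoke Corollary~\ref{coro:irratpol} directly, bypassing any explicit computation of the alternating sum in Theorem~\ref{theo:main}, and then piggyback on Theorem~\ref{theo:slope2} to bootstrap the single case $(\ell, a) = (1, 2)$ up to the entire range $a \geq \ell + 1$.
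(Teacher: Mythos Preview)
Your proof is correct, but it follows a different route from the paper's argument. The paper works directly with the full Newton polytope $a\Delta_\ell \times b\Delta_m$: it takes a regular unimodular triangulation of $a\Delta_\ell$ and crosses it with $b\Delta_m$, obtaining a subdivision whose interior faces are the products $\delta_1 \times b\Delta_m$ with $\delta_1$ a cell of the triangulation not contained in $\partial(a\Delta_\ell)$. Those with $\dim\delta_1\ge 1$ have lattice width one (in a direction of the first factor), while each interior vertex $v$ of the triangulation gives a face $\{v\}\times b\Delta_m\cong b\Delta_m$; the hypothesis $a\ge \ell+1$ guarantees that at least one such interior vertex exists. One then checks the obstruction in Theorem~\ref{theo:main}: all the stably irrational interior faces have the same dimension~$m$, hence contribute with the same sign, and cannot cancel against the stably rational ones, which give only multiples of $\sbir{\Spec k}$.

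Your approach instead factors the problem through Theorem~\ref{theo:slope2}, reducing to the single case $(\ell,a)=(1,2)$, where the one-slice subdivision of $[0,2]\times b\Delta_m$ has a \emph{unique} stably irrational interior face, so Corollary~\ref{coro:irratpol} applies without any further bookkeeping. This buys a cleaner endgame, at the cost of invoking the heavier Theorem~\ref{theo:slope2}. The paper's proof is more self-contained but must implicitly deal with the possibility of several interior lattice points in $a\Delta_\ell$ when $a>\ell+1$; that step is harmless (same-sign contributions cannot cancel), though the paper does not spell it out. Both arguments are valid, and yours is arguably tidier once Theorem~\ref{theo:slope2} is in hand.
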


\begin{proof}
  The Newton polytope of $X$ is given by the product polytope
        $$a\Delta_\ell\times b\Delta_m=\{(u,v)\in \R_{\geq 0}^{\ell+m}\,|\,\sum_{i=1}^{\ell} u_i\le a,\ \sum_{j=1}^{m} v_j\le b\}.$$
       Starting with a unimodular subdivision of $a\Delta_\ell$ and taking the product with $b\Delta_m$, we get a regular subdivision $\mathscr P$ of $a\Delta_\ell\times b\Delta_m$ in which each polytope in $\mathscr P$ that meets the relative interior of $a\Delta_\ell\times b\Delta_m$ either has lattice width one, or is isomorphic to the polytope $b\Delta_m$, which is stably irrational by assumption. The latter possibility occurs because $a\geq \ell+1$ so that $a\Delta_\ell$ has an interior lattice point, which must be a vertex of $\mathscr P$.  Then by the formula \eqref{eq:assum} in Theorem \ref{theo:main}, we find that also $a\Delta_\ell\times b\Delta_m$ is stably irrational.
\end{proof}


Fano fibrations over $\PP^1_k$ are of special interest. There is an extensive literature on these varieties, especially from the viewpoint of birational rigidity \cite{pukhlikov-paper,sobolev, Che,pukhlikov-book,krylovokada}. The main result of this section is  the following theorem.

\begin{theo}\label{P1P4}
Let $X$ be a very general divisor of bidegree $(a,b)$ in $\PP^1_k\times_k \PP^n_k$, with $n\in \{2,3,4\}$ and $a,b\geq 1$. Then $X$ is  rational if  
$a=1$ or $b\le 2$, and stably irrational in all other cases.
\end{theo}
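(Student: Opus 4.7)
The rationality cases are classical. If $a = 1$, the defining equation is linear in the $\PP^1_k$-coordinates and solving for them yields a birational map $X \dashrightarrow \PP^n_k$. If $b = 1$, the projection $X \to \PP^1_k$ is generically a $\PP^{n-1}$-bundle. If $b = 2$, the generic fiber of $X \to \PP^1_k$ is a smooth quadric of dimension $n - 1 \geq 1$ over the $C_1$ field $k(\PP^1)$, which has a rational point by Tsen's theorem; stereographic projection from this point realizes $X$ as birational to $\PP^1_k \times_k \PP^{n-1}_k$.

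For stable irrationality, assume $a \geq 2$ and $b \geq 3$. Applying Theorem \ref{theo:slope2} with $\ell = \ell' = 1$ and $m = m' = n$ reduces the problem to proving stable irrationality of a very general $(2, 3)$-divisor in $\PP^1_k \times_k \PP^n_k$ for each $n \in \{2, 3, 4\}$. The case $n = 2$ is immediate: by adjunction $K_X \cong \mathcal{O}_X$, so $X$ is a K3 surface and hence not stably rational. Equivalently, Proposition \ref{prop:productproj2} applies with $b = 3$, since a very general plane cubic is a non-rational elliptic curve.

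For $n = 3$ and $n = 4$ --- the Verra threefold and fourfold --- I would construct explicit regular integral subdivisions of the Newton polytope $\Delta = 2\Delta_1 \times 3\Delta_n$ and invoke Theorem \ref{theo:main} in combination with Theorem \ref{theo:variation}. The strategy is to arrange the subdivision so that every interior face either has lattice width one (hence is stably rational by Example \ref{exam:prim}) or corresponds to a hypersurface already known to be stably irrational. Natural candidates for the stably irrational face include the $(2,3)$-K3 of Newton polytope $2\Delta_1 \times 3\Delta_2$, a suitable double solid (e.g.\ an Artin--Mumford quartic double solid, or the sextic double solid to which a $(2,3)$-Verra threefold is birational via the conic bundle structure $X \to \PP^n_k$), or --- in the inductive step $n = 4$ --- the Verra threefold from the $n = 3$ case. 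Theorem \ref{theo:main} then reduces the assertion to showing that the alternating sum of stable birational classes of the interior faces differs from $\sbir{\Spec k}$, and Theorem \ref{theo:variation} is invoked to prevent accidental cancellations between distinct stably irrational strata.

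The main obstacle, matching the authors' remark that the relevant degenerations are ``significantly more involved'' and have ``large numbers of irreducible components'', lies in the concrete design of these subdivisions. The polytope $\Delta$ has no interior lattice points --- the only candidate $(1, 1, \ldots, 1)$ lies on the boundary hyperplane $\sum_{i=1}^n v_i = 3$ --- so naive hyperplane cuts of $\Delta$ cannot produce the Newton polytope of a K3 or a double solid as an interior face. One is therefore forced to combine several carefully chosen cutting hyperplanes, or to use mixed Minkowski--sum and patchworking constructions, so that $\Delta$ decomposes into many maximal cells whose shared boundary strata include the stably irrational varieties one wishes to exploit.
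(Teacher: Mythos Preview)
Your treatment of the rational cases and the reduction via Theorem \ref{theo:slope2} to bidegree $(2,3)$, as well as the $n=2$ case, is correct and matches the paper. The genuine gap is in the cases $n=3$ and $n=4$, where your proposal remains a plan rather than a proof.

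For $n=3$ the paper does not argue tropically at all: it simply invokes the result of Krylov--Okada \cite{krylovokada} that a very general $(2,3)$-divisor in $\PP^1_k\times_k\PP^3_k$ is stably irrational. Your sketch of a tropical argument for this case is therefore unnecessary, and you have not actually carried it out.

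For $n=4$ the key idea you are missing is the choice of stably irrational sub-polytope. Your candidates --- the K3 polytope $2\Delta_1\times 3\Delta_2$, a double-solid polytope, or the Verra-threefold polytope $2\Delta_1\times 3\Delta_3$ --- all naturally sit in $\partial(2\Delta_1\times 3\Delta_4)$, and as you yourself observe, the absence of interior lattice points obstructs moving them off the boundary by hyperplane cuts. The paper instead uses the Hassett--Pirutka--Tschinkel quartic polytope of Example \ref{hptpolytope}, which is a \emph{full-dimensional} (five-dimensional) lattice polytope contained in $2\Delta_1\times 3\Delta_4$ and not in its boundary. One then takes the regular subdivision induced by $f(x)=\min_{z\in\Delta}\lVert x-z\rVert^2$, which has $\Delta$ as a maximal cell, and checks by computer (Macaulay2/polymake) that every even-dimensional interior face is either of lattice width one or is one of two explicitly listed polytopes, each yielding a rational hypersurface. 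Since the HPT class is stably irrational and appears only with sign $(-1)^5$, it cannot cancel against the stably rational even-dimensional contributions, and Theorem \ref{theo:main} applies. No appeal to Theorem \ref{theo:variation} is needed.
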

\begin{proof}
If $a=1$, the second projection defines a birational morphism $X\to \mathbb{P}^n_k$, so that $X$ is rational. When $b \le 2$, then $X\to \mathbb{P}^1_k$ is  generically either a projective bundle or a quadric bundle, and therefore rational by Tsen's theorem.

We now prove that $X$ is stably irrational in all other cases. The case $n=2$ is trivial, since there the canonical bundle of $X$ is semi-ample. So we may assume that $n\in \{3,4\}$. It suffices to prove that very general $(2,3)$-divisors are stably irrational; the remaining cases then follow from Theorem \ref{theo:slope2}. For $n=3$ this was proved in Theorem 1.2 in \cite{krylovokada}. So here we only need to consider the $n=4$ case.

A very general $(2,3)$-divisor $X\subset \PP^1_k\times_k \PP^4_k$ gives rise to a cubic threefold fibration over $\PP^1_k$. The second projection is generically 2-to-1, and shows that $X$ is birational to a double cover of $\PP^4_k$ branched along a determinantal sextic threefold.

Consider the Hassett--Pirutka--Tschinkel quartic \eqref{hptquartic} and its Newton polytope $\Delta$ from Example \ref{hptpolytope}. The starting observation is that $\Delta$ is contained in the Newton polytope of a general $(2,3)$ divisor:
      $$2\Delta_1\times 3\Delta_4=\{(u,v)\in \R_{\geq 0}^{1+4}\,|\,u\le 2,\ v_1+\ldots+v_4\le 3\}.$$In concrete terms, the  following bidegree $(2,3)$ polynomial \begin{dmath*}x_0^2y_0^3-2x_0x_1y_0^3+x_1^2y_0^3-2x_0^2y_0^2y_1-2x_0x_1y_0^2y_1\\+x_0^2y_0y_1^2+x_0x_1y_1y_2^2+x_0^2y_1y_3^2+x_0
       x_1y_0y_4^2
       \end{dmath*}dehomogenizes to the quartic \eqref{hptquartic} in the affine chart $D_{01}=\{x_0=y_0=1\}$. 
       
       Let $\mathscr P$ denote the regular subdivision of the polytope $2\Delta_1\times 3\Delta_4$ induced by the convex function $$f \colon \RR^5 \to \RR,\,x\mapsto \min_{z\in \Delta} \lVert{x-z}\rVert^2
$$
Using \verb"Macaulay2" or \verb"polymake" we compute the number of faces of each dimension\footnote{Macaulay2 code: \url{www.mn.uio.no/math/personer/vit/johnco/papers/subdivp1p4.m2}.}:
\begin{center}
\begin{tabular}{l|l|l|l|l|l|l|}
   $\mbox{dim }\delta$ & 0 & 1 & 2 & 3 & 4 & 5 \\ 
   \hline
   number  & 43 & 192 & 353 & 323 & 146 & 26 \\
\end{tabular}
\end{center}As $\mathscr{P}$ contains the stably irrational polytope $\Delta$ (which has dimension 5), it suffices by Theorem \ref{theo:main} to show that any polytope in $\mathscr{P}$ of even dimension is either stably rational or contained in the boundary of $2\Delta_1\times 3\Delta_4$. Going through the faces of dimension 2 and 4 reveals that any face $\delta$ of even dimension either has lattice width one or is one of the following two polytopes:

{    \begin{enumerate}[(a)]
 \item \label{it:casea} Conv\{(0, 3, 0, 0, 0), (1, 3, 0, 0, 0), (2, 2, 1, 0, 0), (1, 1, 2, 0,
      0), (2, 1, 2, 0, 0), (0, 1, 0, 2, 0), (1, 1, 0, 2, 0), (1, 1, 0, 0, 2),
      (2, 1, 0, 0, 2)\}
 \item \label{it:caseb} Conv\{(2, 0, 0, 0, 0), (0, 2, 0, 0, 0), (1, 1, 2, 0, 0), (0, 1, 0, 2,
      0), (1, 0, 0, 0, 2)\}
     \end{enumerate}}

    
In each of these cases the corresponding hypersurface $Y$ is rational as well: in the case \eqref{it:casea}, translating the polytope by the vector $(0,-1,0,0,0)$ shows that $Y$ is a quadric bundle over $\PP^1_k$, hence rational. In the case \eqref{it:caseb}, the hypersurface $Y$ is defined by an affine equation of the form 
$$\alpha z_0 z_1 z_2^2+\beta z_0 z_1 z_2+\gamma z_1 z_3^2+\delta
       z_0 z_4^2+\varepsilon z_0^2+\zeta z_0 z_1+\eta z_1^2=0.$$
The projection onto $\Spec k[z_2,z_3,z_4]$ defines a conic bundle with a section (given by $z_0=z_1=0$), and hence $Y$ is rational. 
\end{proof}
Proposition \ref{prop:productproj}  and Theorem \ref{P1P4} together  completely settle the classification of stably irrational hypersurfaces in $\PP^\ell_k \times_k \PP^m_k$ of dimension at most four. Combined with  Proposition \ref{prop:productproj2}, they also cover a wide range of cases in higher dimensions.

\section{Complete intersections}\label{sec:ci}
\subsection{Intersections of a quadric and a cubic in $\mathbb{P}^6$}\label{ss:quadcub}
 We now turn our attention to complete intersections. 
Table \ref{tab:fano4} collects what is known about stable rationality of very general Fano complete intersection fourfolds (excluding the elementary cases of linear spaces and quadrics in $\mathbb{P}^5_k$, and the classical case of intersections of two quadrics in $\mathbb{P}^6_k$; all of these cases are rational). The first column displays the multidegrees, the second column indicates whether a very general complete intersection with these multidegrees is stably rational, and the third column contains the references to the original results.

\begin{table}[!htbp]
\begin{tabular}{| l | l | l |}
\hline
  \mbox{multidegree} & stably rational? & reference \\
  \hline 
  4\mbox{ or }5 & No & Totaro \cite{totaro} \\
   (2,4) & No & Chatzistamatiou-Levine \cite[6.1]{CL}\\
   (3,3) & No & Chatzistamatiou-Levine \cite[6.1]{CL}\\
  (2,2,2) & No & Hassett-Pirutka-Tschinkel \cite{HPT3quad} \\
  (2,2,3)  & No & Chatzistamatiou-Levine \cite[6.1]{CL} \\
  (2,2,2,2)  & No & Chatzistamatiou-Levine \cite[6.1]{CL} \\
  \hline
\end{tabular}
\vspace{3pt}
\caption{Rationality properties of very general Fano complete intersection fourfolds}
\label{tab:fano4}
\end{table}
The only open cases are the cubic fourfold and the $(2,3)$ complete intersection in $\mathbb{P}^6_k$. We will settle the latter case by proving that a very general intersection of a quadric and a cubic in $\mathbb{P}^6_k$ is not stably rational.
 Our proof is again based on degeneration techniques. Rather than using tropical tools, we will construct a suitable semi-stable degeneration by hand and then apply the motivic obstruction provided by Corollary \ref{coro:obstruction}.

\begin{theo}\label{theo:quadcub}
A very general intersection of a quadric and a cubic in $\mathbb{P}^6_k$ is stably irrational. 
\end{theo}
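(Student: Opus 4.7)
The plan is to construct, by hand, a strictly toroidal degeneration $\widetilde{\mathcal{X}}\to\mathrm{Spec}\,R$ of very general $(2,3)$ complete intersections in $\mathbb{P}^6_K$ and to apply Corollary \ref{coro:obstruction} by isolating a K3 surface class in the resulting alternating sum. The starting family is
$$\mathcal{X}=V(L_1L_2+tQ_\infty,\ C)\subset\mathbb{P}^6_R,$$
with $L_1,L_2$ general linear forms, $Q_\infty$ a general quadric, and $C$ a general cubic over $k$. The naive special fiber is the union $V(L_1,C)\cup V(L_2,C)$ of two cubic fourfolds, meeting along the cubic threefold $V(L_1,L_2,C)\subset\mathbb{P}^4_k$. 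Since the stable rationality of a very general cubic fourfold or cubic threefold is presently open, a naive application of Corollary \ref{coro:obstruction} to this decomposition cannot work; the proof must exploit a more subtle stratum.

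A local Jacobian computation shows that the total space $\mathcal{X}$ has ordinary double points precisely along the surface
$$S=V(L_1,L_2,Q_\infty,C)\subset\mathbb{P}^4_k,$$
which, for a very general choice of data, is a smooth $(2,3)$ complete intersection of dimension $2$ in $\mathbb{P}^4_k$, hence a K3 surface by adjunction ($K_S=\mathcal{O}_S$). Performing a sequence of blow-ups (or a small resolution) along $S$ produces a strictly toroidal model $\widetilde{\mathcal{X}}\to\mathrm{Spec}\,R$ whose exceptional strata are projective bundles over $S$ and therefore stably birational to $S$, while the proper transforms of the original components and their pairwise intersections remain rationally connected Fano varieties (cubic fourfolds, cubic threefolds, and various intersections thereof).

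The decisive observation is that a K3 surface is not stably birational to any rationally connected variety: rational connectedness is preserved by products with projective spaces and by birational equivalence of smooth proper varieties, while K3 surfaces are not uniruled. Consequently, $[S]_{\mathrm{sb}}$ is a distinct generator of $\mathbb{Z}[\SB_k]$, independent from every Fano contribution. Expanding
$$\Volsb([X])=\sum_{E}(-1)^{\codim(E)}[E]_{\mathrm{sb}}$$
and collecting terms, the exceptional strata contribute a nonzero integer multiple of $[S]_{\mathrm{sb}}$, so the sum differs from $[\mathrm{Spec}\,k]$ in $\mathbb{Z}[\SB_k]$; Corollary \ref{coro:obstruction} then yields that the generic fiber is not stably rational. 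The hard part will be the explicit analysis of the resolution: one must describe the exceptional divisors of the blow-up (or small resolution) along $S$ and their intersections with the proper transforms of the $Y_i$ and of $Z$ in enough detail to verify that the total $[S]_{\mathrm{sb}}$-coefficient does not accidentally cancel in the alternating sum, and to check that the hypotheses of Theorem \ref{theo:vol} are met by the resulting strictly toroidal model.
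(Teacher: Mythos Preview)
Your degeneration idea is natural, but it has a fatal gap: the $[S]_{\mathrm{sb}}$-contributions \emph{always} cancel, and what remains is an expression in the classes of cubic fourfolds and a cubic threefold, about which nothing is known.

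To see this, note that the local model of $\mathcal{X}$ along $S$ is the threefold node $xy+tz=0$; this admits a \emph{small} resolution (e.g.\ blow up the ideal $(L_1,t)$), which is already semi-stable.  The resulting special fiber has exactly the two strict transforms $\tilde Y_1,\tilde Y_2$ (birational to the cubic fourfolds $Y_i$) meeting along $\tilde Y_1\cap\tilde Y_2\cong Z$ (the cubic threefold), and \emph{no} exceptional component.  Hence
\[
\Volsb([X])=[Y_1]_{\mathrm{sb}}+[Y_2]_{\mathrm{sb}}-[Z]_{\mathrm{sb}}.
\]
Since $\Volsb$ is independent of the chosen strictly toroidal model (Theorem~\ref{theo:vol}), the same value must arise from your big blow-up along $S$.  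Indeed, if you carry out that blow-up you obtain three components $\tilde Y_1,\tilde Y_2,E$ with $E$ a $\mathbb{P}^1\times\mathbb{P}^1$-bundle over $S$, pairwise intersections $\tilde Y_1\cap\tilde Y_2\cong Z$ and $\tilde Y_i\cap E$ each a $\mathbb{P}^1$-bundle over $S$, and triple intersection $\cong S$; the $[S]_{\mathrm{sb}}$-coefficient is $1-2+1=0$.  So the ``hard part'' you flag at the end is not merely hard---it is impossible.  The cancellation is forced by the geometry of the $A_1$-singularity, not an accident of bookkeeping.

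The paper avoids this by degenerating the quadric differently: instead of letting it split as $L_1L_2$, it specializes to the rank-$4$ quadric $x_0x_1-x_2x_3$, and simultaneously specializes the cubic to contain the singular plane $P$ of that quadric.  After a single blow-up along $P$ one obtains a two-component semi-stable model in which one component $E_1$ is a very general quadric surface bundle over $\mathbb{P}^1\times\mathbb{P}^1$ of a type that Schreieder has shown to be stably irrational, while $E_1\cap E_2$ is rational.  The obstruction is then immediate: a known stably irrational class survives, rather than a K3 class that inevitably cancels.
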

In fact, we will show that a very general complete intersection of a quadric and a cubic containing a $2$-plane in $\PP^6_k$ is stably irrational.
\begin{proof}
By Corollary \ref{coro:verygen}, it suffices to find an algebraically closed field $F$ of characteristic zero and a quadric and cubic in $\mathbb{P}^{n+2}_F$ whose intersection is a stably irrational smooth $n$-fold. We will construct such an example over $F=K$.

 We first introduce some notation. For every non-negative integer $\ell$ and every tuple $u$ in $\N^{\{0,\ldots,\ell\}}$, we denote by $|u|$ the sum of the coordinates of $u$, and we use the
 multi-index notation $x^{u}$ to denote the monomial $x_0^{u_0}\cdots x_\ell^{u_\ell}$.
  For every $i$ in $\N^{\{0,1,2,3\}}$ and every $j$ in $\N^{\{4,5,6\}}$, we will denote by $ij$
 the tuple in $\N^{\{0,\ldots,6\}}$ obtained by concatenating $i$ and $j$.
We set
$$I=\{(i,j)\in \N^{\{0,1,2,3\}}\times \N^{\{4,5,6\}}\,|\ |ij|=3\}.$$

 Let $\cX$ be the projective flat $k\llbr t\rrbr$-scheme defined by
$$\cX=\mathrm{Proj}\,k\llbr t\rrbr[x_0,\ldots,x_6]/(x_0x_1-x_2x_3+t^2(x_4^2+x_5^2+x_6^2)).$$
Let $\cY$ be the closed subscheme of $\cX$ defined by the cubic equation
\begin{equation}\label{eq:cubic}
\sum_{(0,j)\in I}ta_{0j}x^{0j}+\sum_{(i,j)\in I,\,i\neq 0}a_{ij}x^{ij}=0
\end{equation}
for a very general choice of coefficients $a_{ij}$ in $k$.
    We will prove that $\cY_K$ is not stably rational. 

The special fiber $\cX_k$ is the toric hypersurface in $\mathbb{P}^6_k$ defined by $x_0x_1-x_2x_3=0$.
   The scheme $\cX$ is singular along the plane $P$ in $\cX_k$ defined by $t=x_0=x_1=x_2=x_3=0$. The closed subscheme $\cY_k$ is the intersection of $\cX_k$ with a very general cubic hypersurface in $\mathbb{P}^6_k$ containing the plane $P$.
 Let $\pi\colon \cY'\to \cY$ be the blow-up of $\cY$ along $P$.

  We claim that $\cY'$ is regular, and that its special fiber $\cY'_k$ is a reduced divisor with strict normal crossings. Moreover, $\cY'_k$ has two irreducible components $E_1$ and $E_2$, where $E_1$ is the strict transform of $\cY_k$. Finally, we also claim that $E_1$ is stably irrational and that $E_1\cap E_2$ is rational. Then by applying Corollary \ref{coro:obstruction} to the strictly semi-stable model $\cY'\times_{k\llbr t \rrbr}R$ of $\cY_K$, we see that $\cY_K$ is not stably rational.

  So let us prove our claims. The plane $P$ is the base locus of the linear system of cubic forms  $$\sum_{(i,j)\in I,\,i\neq 0}a_{ij}x^{ij}$$ on the toric variety $\cX_k$. By Bertini's theorem, the $k$-scheme $\cY_k$ is integral, and smooth away from $P$, for a general choice of coefficients $a_{ij}$ in $k$.  Therefore, we only need to investigate the structure of $\cY'$ above a neighbourhood of $P$.
 On $P$ at least one of the homogeneous coordinates $x_4$, $x_5$ and $x_6$ is non-zero; by the symmetry of our equations, it will be sufficient to consider the affine chart $\cU$ of $\cY$ where $x_6\neq 0$.
  To analyze the geometry of the blow-up $\cU'\to \cU$ at $P\cap \cU$, it suffices to compute the $t$-chart and the $x_0$-chart, because our equations are also symmetric in the variables $x_0,\ldots,x_3$.

Let $\cV$ be the closed subscheme of
$\mathrm{Proj}\, k\llbr t\rrbr [w_0,\ldots,w_6]$ defined by the equations
$$\left\{\begin{array}{rcl}
 w_0w_1-w_2w_3+w_4^2+w^2_5+w_6^2&=&0,
   \\[1.5ex] \displaystyle \sum_{(0,j)\in I}a_{0j}w^{0j}+ \sum_{(i,j)\in I,\,i\neq 0}t^{|i|-1}a_{ij}w^{ij}&=&0.
   \end{array}\right.$$
   The $t$-chart $\cU^{(t)}$ of the blow-up $\cU'$ (by which we mean the chart where the inverse image of $P$ is defined by $t=0$) is the affine chart of $\cV$ where $w_6\neq 0$.
The first equation defines a smooth quadric, so it follows from Bertini's theorem that the special fiber  $\cV_k$
 is integral, and smooth away from the base locus $w_4=w_5=w_6=0$ of the linear system of cubics
$$\sum_{(i,j)\in I,\,|i|\leq 1}a_{ij}w^{ij} .$$
Thus the open subscheme  $\cU^{(t)}_k$ of $\cV_k$ defined by $w_6\neq 0$ is smooth and connected.

  The $x_0$-chart $\cU^{(x_0)}$ of the blow-up $\cU'$ is the closed subscheme of
   $\Spec k\llbr t\rrbr [s,v_0,\ldots,v_6]$ defined by the equations
$$\left\{\begin{array}{rcl}
v_6&=&1,
\\[1.5ex] sv_0&=&t,
  \\[1.5ex] v_1-v_2v_3+s^2(v_4^2+v^2_5+1)&=&0,
   \\[1.5ex]\displaystyle s(\sum_{(0,j)\in I}a_{0j}v^{0j})+ \sum_{(i,j)\in I,\,i\neq 0}a_{ij}v_0^{|i|-1}v^{\widetilde{\imath}j}&=&0,
   \end{array}\right.$$
where we set $\widetilde{\imath}=(0,i_1,i_2,i_3)$.
 The special fiber $\cU^{(x_0)}_k$ is reduced and consists of two smooth irreducible components,
 $D_1$ and $D_2$. The component $D_1$ is the closed subscheme of
   $\Spec k[v_0,\ldots,v_6]$ cut out by the equations
$$\left\{\begin{array}{rcl}
v_6&=&1,
  \\[1.5ex] v_1-v_2v_3&=&0,
   \\[1.5ex] \displaystyle \sum_{(i,j)\in I,\,i\neq 0}a_{ij}v_0^{|i|-1}v^{\widetilde{\imath}j}&=&0.
   \end{array}\right.$$
   The closure of $D_1$ in $\cY'_k$ is the strict transform of $\cY_k$. After the substitution  $v_1=v_2v_3$, the third equation becomes a polynomial in $k[v_0,v_2,\ldots,v_6]$
of the form
$$\sum_{p,q\in \{0,4,5,6\},\,p \leq q}g_{p,q}(v_2,v_3)v_pv_q$$  where $g_{p,q}(v_2,v_3)$ is a very general polynomial of bidegree $(c+1,c+1)$ with $c$ the number of occurrences of the coordinate $0$ in the couple $(p,q)$.

  It follows from Corollary 11 in \cite{schreieder-conic} that $D_1$ is not stably rational; in the notations of \cite{schreieder-conic}, the scheme $D_1$ is an open subscheme of a very general quadric surface bundle of lexicographically ordered type $((1,1),(1,1),(1,1),(3,3))$.

  The component $D_2$ is the closed subscheme of
   $\Spec k[s,v_1,\ldots,v_6]$ cut out by the equations
$$\left\{\begin{array}{rcl}
v_6&=&1,
\\[1.5ex] v_1-v_2v_3+s^2(v_4^2+v^2_5+1)&=&0,
\\[1.5ex] \displaystyle s(\sum_{(0,j)\in I}a_{0j}v^{0j})+ \sum_{(i,j)\in I,\,|i|=1}a_{ij}v^{\widetilde{\imath}j}&=&0.
 \end{array}\right.$$

The schematic intersection $D_1\cap D_2$ is
the closed subscheme of
   $\Spec k[v_1,\ldots,v_6]$ cut out by the equations
$$\left\{\begin{array}{rcl}
v_6&=&1,
\\[1.5ex] v_1-v_2v_3&=&0,
\\[1.5ex] \displaystyle \sum_{(i,j)\in I,\,|i|=1}a_{ij}v^{\widetilde{\imath}j}&=&0.
 \end{array}\right.$$
This is a smooth hypersurface in $\Spec k[v_2,v_3,v_4,v_5]$.
 It is rational, because it is defined by an irreducible polynomial that has degree $1$ in the variable $v_2$.

Combining these calculations, we find that $\cY'_k$ is reduced, and consists of two irreducible components $E_1=\overline{D_1}$ and $E_2=\overline{D_2}$. These components are smooth over $k$, the component $E_1$ is stably irrational, and the intersection $E_1\cap E_2=\overline{D_1\cap D_2}$ is a smooth rational $k$-variety. Our local calculations also show that $E_1$ and $E_2$ are Cartier along $E_1\cap E_2$. It follows that $\cY'$ is regular, and that $\cY'_k$ is a reduced divisor with strict normal crossings.
\end{proof}

\subsection{Intersections of two cubics in $\mathbb{P}^7$}
We can use the result in the previous section to settle another open problem about stable rationality of complete intersections: we will prove that a very general intersection of two cubics in $\mathbb{P}^7_k$ is stably irrational.

\begin{theo}\label{theo:33}
Let $n$ be an integer such that $n\geq 3$. Assume that a very general intersection of a quadric and a cubic in $\mathbb{P}^{n+1}_k$ is not stably rational.  
 Then a very general intersection of two cubics in $\mathbb{P}^{n+2}_k$ is not stably rational.
\end{theo}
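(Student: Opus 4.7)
The plan is to reduce via Corollary \ref{coro:verygen} to verifying stable irrationality of a single carefully chosen $(3,3)$ complete intersection over $K$, obtained by degenerating one of the two defining cubics to a product of a linear form and a quadric, building a strictly semi-stable model of the resulting family by a single blow-up, and applying Corollary \ref{coro:obstruction}.

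Specifically, let $L, Q, G, F_2$ be very general homogeneous forms over $k$ of degrees $1, 2, 3, 3$ in $x_0, \ldots, x_{n+2}$ and put
$$\mathcal{X} = \{F_2 = 0,\; tG + LQ = 0\} \subset \mathbb{P}^{n+2}_R.$$
The generic fiber $\mathcal{X}_K$ is a smooth $(3,3)$ complete intersection, and $\mathcal{X}_k = D_1 \cup D_2$ with $D_1 = \{L = F_2 = 0\}$, $D_2 = \{Q = F_2 = 0\}$ and $D_{12} = D_1 \cap D_2 = \{L = Q = F_2 = 0\}$. Under $\{L = 0\} \cong \mathbb{P}^{n+1}_k$ the scheme $D_{12}$ is a very general complete intersection of a quadric and a cubic in $\mathbb{P}^{n+1}_k$, hence stably irrational by hypothesis. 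A Jacobian calculation identifies the singular locus of $\mathcal{X}$ as the codimension-two subscheme $\{t = L = Q = G = F_2 = 0\}$ of $D_{12}$, with local \'etale equations of conifold type $y_0 y_1 + t y_2 = 0$. Blowing up the Weil divisor $D_1 \subset \mathcal{X}$ yields a regular model $\mathcal{Y}' \to \mathcal{X}$: in the standard affine charts the strict transform of $\mathcal{X}$ reads $\{s y_2 + y_1 = 0\}$ or $\{s' y_1 + y_2 = 0\}$ (and is thus smooth), the special fiber has two components $\tilde{E}_1, \tilde{E}_2$ birational to $D_1, D_2$ respectively, and they meet transversally along a divisor birational to $D_{12}$. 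This chart computation parallels the one in the proof of Theorem \ref{theo:quadcub}. Corollary \ref{coro:obstruction} then gives
$$\mathrm{Vol}_{\mathrm{sb}}([\mathcal{X}_K]) = [D_1]_{\mathrm{sb}} + [D_2]_{\mathrm{sb}} - [D_{12}]_{\mathrm{sb}}.$$

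To complete the argument, we must show this class differs from $[\mathrm{Spec}\,k]_{\mathrm{sb}}$. Since $\mathbb{Z}[\mathrm{SB}_k]$ is free abelian on stable birational equivalence classes, expanding the potential equality $[D_1] + [D_2] - [D_{12}] = [\mathrm{Spec}\,k]$ reduces the problem to arranging that $[D_{12}]_{\mathrm{sb}}$ equals none of $[\mathrm{Spec}\,k]_{\mathrm{sb}}, [D_1]_{\mathrm{sb}}, [D_2]_{\mathrm{sb}}$ for some admissible $u = (L, Q, G, F_2)$: the first inequality is the hypothesis on $D_{12}$; for the other two I would apply Corollary \ref{coro:verygen} to the families of pairs $(D_i(u), D_{12}(u))$ over the moduli of tuples $u$ and show the coincidence loci are proper countable unions of closed subsets, by exhibiting a single specialisation where $D_i$ is stably rational but $D_{12}$ is stably irrational (for instance by choosing $L$ and $F_2$ so that $D_1$ is a cubic containing a sufficiently large linear subspace, and $Q$ of small rank so that $D_2$ admits a rational linear projection).

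The principal technical obstacle is the strict semi-stability of $\mathcal{Y}' = \mathrm{Bl}_{D_1}(\mathcal{X})$: one must verify globally (not only in the local conifold model) that the blow-up is regular, that its special fiber has exactly two components meeting transversally, and that the exceptional divisor of $\tilde{E}_1 \to D_1$ is a projective bundle supported over a codimension-one subvariety of $D_1$, so that the birational class of $\tilde{E}_1$ and the stable birational class of $\tilde{E}_1 \cap \tilde{E}_2$ agree with those of $D_1$ and $D_{12}$ respectively. A secondary subtlety is the non-cancellation argument, which requires, for each $n \geq 3$, producing specialisations with the prescribed mix of rationality and stable irrationality properties; for small $n$ this relies on known explicit rational loci (such as rational cubic fourfolds for $n = 3$) and on elementary rationality constructions for $(2,3)$ complete intersections via linear projection.
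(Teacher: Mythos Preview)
Your degeneration is the paper's: both split one of the two cubics as $t\cdot(\text{cubic}) + (\text{linear})\cdot(\text{quadric})$, with the paper simply taking the linear form to be the coordinate $x_{n+2}$. Your blow-up along $D_1$ is unnecessary work: the local model $\{tG + LQ = 0\}$ is already of the form $ty_0 - y_1y_2 = 0$ from Example~\ref{exam:standardmod}, so $\mathcal{X}$ is strictly toroidal as it stands, and Theorem~\ref{theo:vol} applies directly. This dissolves what you call the ``principal technical obstacle.''

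The genuine gap is your non-cancellation argument. To show $[D_1]_{\mathrm{sb}} \neq [D_{12}]_{\mathrm{sb}}$ you propose to specialise $D_1$ to a smooth stably rational cubic $n$-fold while keeping $D_{12}$ stably irrational. But for $n = 3$, the base case of the theorem, $D_1$ is a smooth cubic threefold, and no smooth cubic threefold is known to be stably rational --- this is a famous open problem, and your suggestion of forcing $D_1$ to contain a large linear subspace does not help (smooth cubic threefolds contain at most lines). The same difficulty arises for $D_2$: a smooth $(2,3)$ complete intersection in $\mathbb{P}^{n+2}$ is exactly the sort of variety that Theorem~\ref{theo:quadcub} shows is stably irrational in the very general case, and no rational smooth examples are available for the values of $n$ you need.

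The paper sidesteps this entirely. Rather than seeking rational smooth specialisations, it builds, for each $i$, an auxiliary one-parameter family over $k[t]$ whose members all contain $E_{12}$, whose general member lies in the same moduli as $E_i$, and whose fiber at $t=0$ is a reducible union of rational varieties meeting along a rational subvariety. Then Theorem~\ref{theo:vol} gives $\Volsb$ of the generic fiber equal to $[\Spec k]_{\mathrm{sb}} \neq [E_{12}]_{\mathrm{sb}}$, so the generic fiber is not stably birational to $E_{12}\times_k K$, and Corollary~\ref{coro:verygen} propagates this to $E_i$. This works uniformly in $n$ without ever needing a rational smooth cubic $n$-fold.
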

\begin{proof}
Let $q_1$ and $q_2$ be quadratic forms in $k[x_0,\ldots,x_{n+2}]$, and let   
 $c_1$ and $c_2$ be cubic forms in $k[x_0,\ldots,x_{n+2}]$. We assume that the tuple $(q_1,q_2,c_1,c_2)$ is very general.  Let $\cX$ be the closed subscheme of 
$$\mathbb{P}^{n+2}_{R}=\mathrm{Proj}\,R[x_0,\ldots,x_{n+2}]$$ defined by the equations $$c_1=tc_2-x_{n+2}q_2=0.$$
Set $X=\cX\times_{R}K$; this is a smooth complete intersection of two cubics in $\mathbb{P}^{n+2}_K$. 
  The scheme $\cX$ is strictly toroidal, because, Zariski-locally, it admits a smooth morphism to a scheme as in Example \ref{exam:standardmod}. Thus, we can use Theorem \ref{theo:vol} to compute the stable birational volume of $X$. The special fiber $\cX_k$ consists of two irreducible components, $E_1$ and $E_2$, where $E_1$ is the smooth cubic in $\mathbb{P}^{n+1}_k$ defined by $c_1(x_0,\ldots,x_{n+1},0)=0$, and $E_2$ is the closed subscheme of 
 $\mathbb{P}^{n+2}_k$ defined by $c_1=q_2=0$. The intersection $E_{12}=E_1\cap E_2$ is a very general intersection of a quadric and a cubic in $\mathbb{P}^{n+1}_k$; thus it is stably irrational, by the assumption of the theorem. We have 
 $$\Volsb(\sbir{X})=\sbir{E_1}+\sbir{E_2}-\sbir{E_{12}}$$ in $\Z[\SB_k]$. We will prove that this element is different from $\sbir{\Spec k}$; then the result follows from Corollary  \ref{coro:obstruction}. 
 
 It suffices to show that $\sbir{E_1}$ and $\sbir{E_2}$ are both different from $\sbir{E_{12}}$. 
  We first consider the case of $E_1$. We consider the closed subscheme $\cY$ of $\mathbb{P}^{n+2}_{k[t]}$ defined by 
 $$tc_1-x_{n+1}q_2=x_{n+2}=0.$$
 For every value of $t$ in $k^{\times}$, the corresponding member of the family $\cY$
 contains the subscheme $E_{12}$ of $\mathbb{P}^{n+2}_k$.  
The scheme $\cY\times_{k[t]}R$ is strictly toroidal, and its special fiber is a union of two smooth and proper rational varieties intersecting along a smooth rational subvariety; thus it follows from Theorem \ref{theo:vol} that 
$$\Volsb(\sbir{\cY\times_{k[t]}K})=\sbir{\Spec k}\neq \sbir{E_{12}}$$ in $\Z[\SB_k]$.  This implies that  $\cY\times_{k[t]}K$ is not stably birational to $E_{12}\times_k K$, so that a very general cubic in $\mathbb{P}_k^{n+1}$ that contains $E_{12}$ is not stably birational to $E_{12}$, by Corollary \ref{coro:verygen}. 

 The case of $E_2$ is quite similar. Let $\cZ$ be the closed subscheme of $\mathbb{P}^{n+2}_{k[t]}$ defined by 
 $$tc_1-x_{n+2}q_1=q_2=0.$$ For all values of $t$ in $k^{\times}$, the corresponding fiber of the family $\cZ$ contains $E_{12}$. The fiber $\cZ_0$ of $\cZ$ over $0$ is a union of two smooth and proper rational varieties intersecting along a smooth rational subvariety (each of these varieties is a quadric or an intersection of two quadrics). The same argument as above shows that $E_2$ is not stably birational to $E_{12}$.  
  \end{proof}
\begin{coro}\label{coro:33}
A very general intersection of two cubics in $\mathbb{P}^7_k$ is not stably rational.
\end{coro}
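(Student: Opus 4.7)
The plan is to deduce this corollary as an immediate consequence of Theorem \ref{theo:33} combined with Theorem \ref{theo:quadcub}. First I would set $n=5$ in the statement of Theorem \ref{theo:33}; then the dimension condition $n\geq 3$ is satisfied, and a very general intersection of two cubics in $\mathbb{P}^{n+2}_k=\mathbb{P}^7_k$ is precisely the kind of variety we want to show is stably irrational.

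To apply Theorem \ref{theo:33} with $n=5$, one needs to verify its hypothesis: that a very general intersection of a quadric and a cubic in $\mathbb{P}^{n+1}_k=\mathbb{P}^6_k$ is not stably rational. But this is exactly the content of Theorem \ref{theo:quadcub}, so the hypothesis holds.

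Therefore there is essentially no further work to do: the proof reduces to a single invocation of Theorem \ref{theo:33} with $n=5$, using Theorem \ref{theo:quadcub} as the input. I do not anticipate any obstacle, since all the substantive arguments (the semi-stable degeneration constructed in the proof of Theorem \ref{theo:quadcub}, and the two families $\cY$ and $\cZ$ used in the proof of Theorem \ref{theo:33} to rule out the cancellations in $\Volsb$) have already been carried out earlier in the paper. The corollary is simply the case $n=5$ of the implication established in Theorem \ref{theo:33}.
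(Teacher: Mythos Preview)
Your proposal is correct and matches the paper's own proof exactly: the paper also simply invokes Theorem~\ref{theo:quadcub} to verify the hypothesis of Theorem~\ref{theo:33} in the case $n=5$, and concludes.
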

\begin{proof}
By Theorem \ref{theo:quadcub}, a very general intersection of a quadric and a cubic in $\mathbb{P}^6_k$ is not stably rational. Thus the result follows from Theorem \ref{theo:33}.
\end{proof}

\subsection{Bounds for complete intersections}\label{ss:ci}
As another application, we will provide a significant improvement of the degree bounds for stably irrational complete intersections, bringing them up to the same level as Schreieder's results for hypersurfaces in \cite{schreieder}. 
 We use Schreieder's bounds for hypersurfaces as an essential ingredient of the proof. We first establish a generalization of Corollary \ref{coro:hyperplane}.

\begin{prop}\label{prop:linear}
Let $d$ and $n$ be positive integers with $n\geq 2$ and let $X$ be a very general hypersurface of degree $d$ in $\mathbb{P}^{n+1}_k=\mathrm{Proj}\,k[z_0,\ldots,z_{n+1}]$.  
 For every strict subset $I$ of $\{0,\ldots,n+1\}$, we denote by $L_I$ the linear subspace of $\mathbb{P}^{n+1}_k$ defined by $z_i=0$ for all $i$ in $I$. In particular, $L_{\emptyset}=\mathbb{P}^{n+1}_k$. 
 
 Let $J$ be a subset of $\{0,\ldots,n+1\}$ of cardinality at most $n-1$ and assume that $X\cap L_J$ is not stably rational. Then for every strict subset $J'$ of $J$, the scheme $X\cap L_{J'}$ is not stably birational to $X\cap L_J$. 
\end{prop}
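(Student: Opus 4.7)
The plan is to reduce to the case $J'=\emptyset$, to dispose of the trivial situation where $X$ itself is stably rational, and then to apply Theorem~\ref{theo:variation}\eqref{it:varyface} with the subdivision of $d\Delta_{n+1}$ by a single coordinate hyperplane $\{u_1=1\}$, exactly the construction of Example~\ref{exam:linear} used in the proof of Corollary~\ref{coro:hyperplane}.

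First I would reduce. Since $X$ is very general, the hypersurface $X\cap L_{J'}$ is a very general degree-$d$ hypersurface in $L_{J'}\cong \mathbb{P}^{n+1-|J'|}_k$, and $X\cap L_J=(X\cap L_{J'})\cap L_{J\setminus J'}$; replacing $X$ by $X\cap L_{J'}$ and relabeling homogeneous coordinates I may assume $J'=\emptyset$ and $J=\{1,\ldots,r\}$ with $1\le r\le n-1$. If a very general degree-$d$ hypersurface in $\mathbb{P}^{n+1}_k$ is stably rational, then $X$ is stably rational while $X\cap L_J$ is not, so they are not stably birational. Otherwise the polytope $\Delta:=d\Delta_{n+1}$ is stably irrational in the sense of Definition~\ref{defi:stablyratpol}, and I set $\delta:=\{u\in\Delta\mid u_1=\cdots=u_r=0\}$, the boundary face of $\Delta$ corresponding to $X\cap L_J$ after dehomogenization at $z_0$.

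Next I take $\mathscr{P}$ to be the regular integral subdivision of $\Delta$ induced by $\varphi(u)=\max\{0,1-u_1\}$, with maximal cells
\[
\tau_1=\Delta\cap\{u_1\le 1\}\qquad\text{and}\qquad \tau_2=\Delta\cap\{u_1\ge 1\}.
\]
The change of variables $v_1=u_1-1$, $v_i=u_i$ for $i\ge 2$ identifies $\tau_2$ with $(d-1)\Delta_{n+1}$ and $\tau_1\cap\tau_2$ with $(d-1)\Delta_n$, and the only faces of $\mathscr{P}$ not contained in $\partial\Delta$ are these three. I would then verify the three hypotheses of Theorem~\ref{theo:variation}\eqref{it:varyface}: (a) $\delta$ arises from $\tau_1$ by imposing the boundary equalities $u_1=\cdots=u_r=0$ and is therefore a face of $\tau_1$, hence of $\mathscr{P}$; (b) the faces $\tau_2$ and $\tau_1\cap\tau_2$ lie in $\{u_1\ge 1\}$ and are disjoint from $\delta\subset\{u_1=0\}$, so $\tau_1$ is the only non-boundary face meeting $\delta$, and it is stably rational by Example~\ref{exam:prim} since $u_1\in\{0,1\}$ on its lattice points; (c) $\tau_1$ is stably rational (so the trivial subdivision works), and $\tau_2$ and $\tau_1\cap\tau_2$ are dilated standard simplices that admit regular unimodular triangulations whose simplices each have lattice width $1$ and are thus stably rational. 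The theorem then yields, for a very general defining polynomial $g$ of $X$, that $Z^o(g)$ is not stably birational to $Z^o(g_\delta)$; combined with Proposition~\ref{prop:sbir} applied to $g$ and to $g_\delta$, this translates to the desired failure of stable birationality between $X$ and $X\cap L_J$.

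The step I expect to require the most care is condition (c) for $\tau_2$: had I sliced $\Delta$ instead along $\{u_1+\cdots+u_r=1\}$, the resulting $\tau_2$ would no longer be a dilated simplex, and the intermediate slicing faces $\{u_1+\cdots+u_r=j\}\cap\Delta$ for $2\le j\le d-2$ would be products of dilated simplices of bidegree $(j,d-j)$, which are not in general stably rational. Choosing the single coordinate hyperplane $\{u_1=1\}$ keeps $\tau_2\cong(d-1)\Delta_{n+1}$ in the same favorable configuration as in the $r=1$ case of Corollary~\ref{coro:hyperplane}, and makes the argument go through uniformly in $r$.
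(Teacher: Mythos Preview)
Your proof is correct and follows essentially the same route as the paper's: reduce to $J'=\emptyset$, handle separately the case where $\Delta$ is stably rational, and otherwise apply Theorem~\ref{theo:variation}\eqref{it:varyface} with the subdivision from Corollary~\ref{coro:hyperplane}, slicing along a single coordinate hyperplane $\{u_j=1\}$ for some $j\in J$. Your verification of conditions (a)--(c) is more explicit than the paper's, which simply refers back to the proof of Corollary~\ref{coro:hyperplane}, and your closing remark explaining why one should \emph{not} slice along $\{u_1+\cdots+u_r=1\}$ is a helpful observation that the paper leaves implicit.
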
 
\begin{proof}
Replacing $\mathbb{P}^{n+1}_k$ by $L_{J'}$, we can reduce to the case where $J'$ is empty and $J$ is non-empty. Then we must show that $X$ is not stably birational to $X\cap L_J$. 
Set $$\Delta=\{(u_0,\ldots,u_{n+1})\in \R^{n+2}_{\geq 0}\,|\,u_0+\ldots+u_{n+1}=d\}$$
and let $\delta$ be the face of $\Delta$ defined by $u_j=0$ for all $j\in J$. 
 Let $g$ be a very general homogeneous polynomial of degree $d$ in $k[x_0,\ldots,x_{n+1}]$. Then $g$ has Newton polytope $\Delta$, and it suffices to show that $Z^o(g)$ is not stably birational to $Z^o(g_{\delta})$. By our assumption that $X\cap L_J$ is not stably rational, we know that $\delta$ is  stably irrational. If $\Delta$ is stably rational then the result is clear. Otherwise, it follows from Theorem \ref{theo:variation}\eqref{it:varyface}: after a permutation of the coordinates, we may assume  that $J$ contains the index $0$, and then we can use the same subdivision $\mathscr{P}$ as in the proof of Corollary \ref{coro:hyperplane}.
\end{proof}

\begin{theo}\label{theo:ci}
Let $n$ and $r$ be positive integers. Let $d_1,\ldots,d_r$ be positive integers such that $d_r\geq d_i$ for all $i$. Assume that   
$$n+r\geq \sum_{i=1}^{r-1} d_i +2$$
and that 
 there exists a stably irrational smooth hypersurface of degree  $d_r$ in $\mathbb P^{n+r-\sum_{i=1}^{r-1}d_i}_k$.
Then a very general complete intersection in $\mathbb{P}^{n+r}_k$ of multidegree $(d_1,\ldots,d_r)$ is not stably rational. 
\end{theo}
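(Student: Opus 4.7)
The plan is to prove the theorem by induction on $N := \sum_{i=1}^{r-1} d_i \geq r-1$. The base case $N = r-1$ forces $d_i = 1$ for $i<r$, so a generic linear change of coordinates identifies $X$ with a very general degree-$d_r$ hypersurface in $\mathbb{P}^{n+r-(r-1)}_k = \mathbb{P}^{n+1}_k$; the assumed stably irrational hypersurface combined with Corollary \ref{coro:verygen} concludes. For the inductive step $N > r-1$: if some $d_j=1$ with $j<r$, a linear change of coordinates reduces $X$ to a complete intersection of multidegree $(d_1,\ldots,\widehat{d_j},\ldots,d_r)$ in $\mathbb{P}^{n+r-1}_k$, whose parameters $(n, r-1)$ satisfy the same dimension inequality, yield the same ambient $\mathbb{P}^{n+r-\sum d_i}_k$ for the required stably irrational hypersurface, and have strictly smaller $N$; so the inductive hypothesis applies.

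Otherwise all $d_i \geq 2$ for $i < r$, and the plan is to apply the degeneration of Example \ref{exam:linear} to the equation $f_1$: set $f_1^{(t)} = t\widetilde f_1 + z_0 h_1$ with $\widetilde f_1 \in k[z_1,\ldots,z_{n+r}]$ generic of degree $d_1$ and $h_1$ generic of degree $d_1 - 1$, keeping $f_2,\ldots,f_r$ independent of $t$. Blowing up $\{t = z_0 = 0\}$ in $\mathbb{P}^{n+r}_R$ and taking the strict transform of $\{f_1^{(t)} = f_2 = \cdots = f_r = 0\}$ should yield a strictly semistable $R$-model of $X_K$: the local analysis of Example \ref{exam:linear} carries over since $f_2,\ldots,f_r$ are transverse to the blow-up center. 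The special fiber then has two smooth components meeting transversally: $E_1$, a very general complete intersection of multidegree $(d_1-1, d_2, \ldots, d_r)$ in $\mathbb{P}^{n+r}_k$; and $E_2$, a hypersurface in a $\mathbb{P}^1$-bundle that is linear on each fiber and hence birational (via the bundle projection) to the very general complete intersection $Y = \{f_2|_H = \cdots = f_r|_H = 0\}$ in $H \cong \mathbb{P}^{n+r-1}_k$, giving $\sbir{E_2} = \sbir{Y}$. Their intersection $E_1 \cap E_2 = \{h_1|_H = f_2|_H = \cdots = f_r|_H = 0\}$ is a very general complete intersection of multidegree $(d_1-1, d_2, \ldots, d_r)$ in $\mathbb{P}^{n+r-1}_k$.

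By Theorem \ref{theo:vol}, $\Volsb(\sbir{X_K}) = \sbir{E_1} + \sbir{Y} - \sbir{E_1 \cap E_2}$ in $\Z[\SB_k]$. The inductive hypothesis applies to $E_1 \cap E_2$ with parameters $(n-1, r; d_1-1, d_2, \ldots, d_r)$: the dimension inequality $(n-1)+r \geq (d_1-1)+d_2+\cdots+d_{r-1}+2$ is equivalent to the original $n+r \geq \sum d_i + 2$; the required stably-irrational hypersurface ambient is $\mathbb{P}^{(n-1)+r-(\sum d_i - 1)}_k = \mathbb{P}^{n+r-\sum d_i}_k$, matching the original hypothesis; and $\sum_{i<r} d'_i = \sum d_i - 1$ strictly decreases. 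Hence $\sbir{E_1 \cap E_2} \neq \sbir{\Spec k}$.

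The hard part is to ensure that the term $-\sbir{E_1 \cap E_2}$ does not cancel in the $\Volsb$ formula, i.e.\ to show $\sbir{E_1} \neq \sbir{E_1 \cap E_2}$ and $\sbir{Y} \neq \sbir{E_1 \cap E_2}$ for very general data; granting this, $\Volsb(\sbir{X_K})$ expanded in the $\Z$-basis $\SB_k$ carries a coefficient $-1$ on the nontrivial class $\sbir{E_1 \cap E_2}$ and thus differs from $\sbir{\Spec k}$, so Corollary \ref{coro:obstruction} concludes. These non-equalities are complete-intersection analogs of Corollary \ref{coro:hyperplane}: $E_1 \cap E_2$ is the hyperplane section $E_1 \cap \{z_0 = 0\}$ of $E_1$, and also the degree-$(d_1-1)$ hypersurface section $Y \cap \{h_1|_H = 0\}$ of $Y$. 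I would prove them by adapting Theorem \ref{theo:variation}\eqref{it:varyface}: fix the data defining $E_1 \cap E_2$, vary only the residual freedom in the extensions of the defining polynomials from $H$ to $\mathbb{P}^{n+r}_k$ (respectively from the fixed boundary data to the full $\mathbb{P}^{n+r-1}_k$), and use Corollary \ref{coro:verygen} together with a specialization where the parent variety becomes stably rational while the fixed section $E_1 \cap E_2$ remains stably irrational; countability of the stably-birational locus then propagates the inequality to a very general choice of data.
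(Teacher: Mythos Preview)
Your inductive strategy is different from the paper's and the geometric setup is correct, but there is a genuine gap at the non-cancellation step that you yourself flag as the hard part. You need $\sbir{E_1}\neq \sbir{E_1\cap E_2}$ and $\sbir{Y}\neq \sbir{E_1\cap E_2}$ where $E_1$, $Y$ and $E_1\cap E_2$ are \emph{complete intersections}, but the tools you propose to invoke---Theorem~\ref{theo:variation}\eqref{it:varyface} and Corollary~\ref{coro:hyperplane}---are statements about \emph{hypersurfaces}, formulated and proved via Newton polytopes. Your proposed adaptation (``find a specialization where the parent variety becomes stably rational while the fixed section remains stably irrational'') is not substantiated: you give no construction of such a specialization, and there is no obvious complete-intersection analog of the lattice-width-one argument that drives the hypersurface case. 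Without this, the induction does not close.

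The paper sidesteps this difficulty by a different design of the degeneration. Instead of degenerating one equation and inducting, it degenerates all of $F_1,\ldots,F_{r-1}$ \emph{simultaneously} to products of linear forms,
\[
tF_i-\prod_{j=1}^{d_i} z_{ij}=0\qquad(i=1,\ldots,r-1),
\]
and leaves $F_r$ untouched. Every stratum of the resulting special fiber is then of the form $Z(F_r)\cap L_T$ for some coordinate linear subspace $L_T$; in other words, every stratum is a \emph{linear section of a single hypersurface}. The non-cancellation question therefore reduces to showing that $Z(F_r)\cap L_T$ is not stably birational to the deepest stratum $Z(F_r)\cap L_S$ for $T\subsetneq S$, and this is precisely Proposition~\ref{prop:linear}, which in turn is a direct application of the hypersurface machinery (Theorem~\ref{theo:variation}). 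The point is that by degenerating all the auxiliary equations to linear pieces at once, the paper never needs a variation-of-stable-birational-type statement for complete intersections; it only ever compares linear sections of one hypersurface.

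Your approach could likely be repaired by proving the required complete-intersection analog of Corollary~\ref{coro:hyperplane}, but doing so essentially amounts to carrying out the paper's simultaneous degeneration inside your inductive step; at that point the induction becomes redundant.
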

\begin{proof}
By Corollary \ref{coro:verygen}, it suffices to construct a stably irrational smooth complete intersection of multidegree $(d_1,\ldots,d_r)$ in $\mathbb{P}^{n+r}_F$, for some algebraically closed field $F$ of characteristic zero. We will take for $F$ the field $K$ of Puiseux series over $k$. If we set $d_{0}=n+r+1-\sum_{i=1}^{r-1} d_i$, then, together with the assumption in the statement, Corollary \ref{coro:verygen} also implies that 
 a very general  hypersurface of degree  $d_r$ in $\mathbb P^{d_0-1}_k$ is stably irrational.

We denote the homogeneous coordinates on $\mathbb{P}^{n+r}_k$ by $z_{ij}$ where $i$ ranges from $0$ to $r-1$ and $j$ ranges from $1$ to $d_i$. We denote by $$A=k[z_{ij}\,|\,i=0,\ldots,r-1;\,j=1,\ldots,d_i]$$
the homogeneous coordinate ring of $\mathbb{P}^{n+r}_k$.

 Let $(F_1,\ldots,F_r)$ be a very general tuple of homogeneous polynomials in $A$ of multidegree $(d_1,\ldots,d_r)$. 
 Let $\cX$ be the closed subscheme of $\mathbb{P}^{n+r}_{R}=\mathrm{Proj}\,(A\otimes_k R)$ defined by 
 $$ tF_1 - \prod_{j=1}^{d_1}z_{1j}=\ldots=tF_{r-1}-\prod_{j=1}^{d_{r-1}}z_{(r-1)j}=0.$$  
  Let $\cY$ be the closed subscheme of $\cX$ defined by $F_r=0$. Then  
  $\cY_K=\cY\times_{R}K$ is a smooth complete intersection of multidegree $(d_1,\ldots,d_r)$ in $\mathbb{P}^{n+r}_K$. We will show that $\cY_K$ is stably irrational by means of the motivic obstruction from Corollary \ref{coro:obstruction}.

 The scheme $\cY$ is strictly toroidal, so that we can compute the stable birational volume of $\cY_K$ by means of the formula in Theorem \ref{theo:vol}.
    Let $S$ be the set of couples $(i,j)$ where $i$ ranges from $1$ to $r-1$ and $j$ ranges from $1$ to $d_i$. We say that a subset $T$ of $S$ is {\em admissible}
    if, for every $i$ in $\{1,\ldots,r-1\}$, there exists an element $j$ in $\{1,\ldots,d_i\}$ such that $(i,j)$ belongs to $T$. For every admissible subset $T$ of $S$, we denote by $L_T$ the linear subspace of $\mathbb{P}^{n+r}_k$ defined by $z_{ij}=0$ for all $(i,j)$ in $T$.
     Then the irreducible components of $\cY_k$ are the schemes $Z(F_r)\cap L_T$ where $T$ runs through the minimal admissible subsets of $S$. The intersection of all these irreducible components is $Z(F_r)\cap L_S$. This is a very general degree $d_r$ hypersurface in 
     $$L_S=\mathrm{Proj}\,k[z_{01},\ldots,z_{0d_0}]\cong \mathbb{P}_k^{d_0-1},$$ 
     and, therefore, stably irrational.
     
   Theorem \ref{theo:vol} implies that  
\begin{equation}\label{eq:ci}
\Volsb(\sbir{\cY_K})=\sum_{T\subset S\,\mathrm{adm.}}(-1)^{|T|-r+1}\sbir{Z(F_r)\cap L_T}
\end{equation} 
in $\Z[\SB_k]$.
   It follows from Proposition \ref{prop:linear} that 
 $Z(F_r)\cap L_T$ is not stably birational to $Z(F_r)\cap L_S$, for all admissible strict subsets $T$ of $S$. This means that the term $\sbir{Z(F_r)\cap L_S}$ does not cancel out in the formula \eqref{eq:ci}, so that $\cY_K$ is stably irrational by Corollary \ref{coro:obstruction}.
\end{proof}

\begin{coro}\label{coro:ci}
Let $n$ and $r$ be positive integers. Let $d_1,\ldots,d_r$ be positive integers such that $d_r\geq 4$ and $d_r\geq d_i$ for all $i$.
 Assume that  
$$
\sum_{i=1}^{r-1} d_i +2 \le n+r\leq 2^{d_r-2}+\sum_{i=1}^r d_i -3.
$$
Then a very general complete intersection in $\mathbb{P}^{n+r}_k$ of multidegree $(d_1,\ldots,d_r)$ is not stably rational.
\end{coro}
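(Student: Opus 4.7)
The plan is to deduce the corollary from Theorem \ref{theo:ci}. Setting $N = n+r-\sum_{i=1}^{r-1} d_i$, the lower bound $\sum_{i=1}^{r-1} d_i + 2 \leq n+r$ appearing in the corollary is exactly the running hypothesis of Theorem \ref{theo:ci}, and simultaneously guarantees $N \geq 2$. Under Theorem \ref{theo:ci}, the corollary reduces to producing a stably irrational smooth hypersurface of degree $d_r$ in $\mathbb{P}^N_k$, so the proof amounts to translating the upper bound on $n+r$, in each parity case, into a dimensional bound on this auxiliary hypersurface and then invoking the strongest available existence results.

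For $d_r$ even, a direct algebraic substitution rewrites the upper bound $n+r \leq 2^{d_r-2} + \sum_{i=1}^r d_i + \tfrac{d_r}{2} - 4$ as $N - 1 \leq 2^{d_r-2} + \tfrac{3 d_r}{2} - 5$. This is precisely the range in which Theorem \ref{theo:schreiederbound} supplies a stably irrational very general hypersurface of degree $d_r$ in $\mathbb{P}_k^{N}$, and Theorem \ref{theo:ci} then concludes.

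For $d_r$ odd, the corresponding substitution rewrites the upper bound as $N - 1 \leq 2^{d_r-2} + d_r - 4$. Here I will combine Schreieder's original bound from \cite{schreieder}, which covers degree $d_r$ hypersurfaces of dimension $\leq 2^{d_r-2}$, with the slope-raising mechanism of Theorem \ref{theo:slope} applied to the improved stable irrationality range at the adjacent even degree $d_r - 1$ provided by Theorem \ref{theo:schreiederbound}. The latter furnishes a stably irrational hypersurface of degree $d_r - 1$, whose degree Theorem \ref{theo:slope} then allows to be raised to $d_r$ at a controlled cost in dimension. Merging these two sources yields stably irrational degree $d_r$ hypersurfaces throughout the range required by the corollary, so Theorem \ref{theo:ci} again finishes the proof.

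The main obstacle is not conceptual but arithmetic: in the odd case one has to verify that Schreieder's bound together with the slope extension from the even degree $d_r - 1$ genuinely covers the whole interval $1 \leq N - 1 \leq 2^{d_r-2} + d_r - 4$ required by the corollary, with no gap in intermediate dimensions. This reduces to a small family of elementary inequalities in $d_r$ that need to be checked in an orderly fashion.
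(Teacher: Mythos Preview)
Your reduction to Theorem~\ref{theo:ci} and the translation of the inequalities into a bound on the dimension $N-1 = n+r-\sum_{i=1}^{r-1}d_i-1$ of the auxiliary hypersurface match the paper exactly, and for even $d_r$ your argument is identical to the paper's: Theorem~\ref{theo:schreiederbound} supplies the required stably irrational hypersurface.

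For odd $d_r$ you diverge. The paper simply invokes Theorem~1.1 of \cite{schreieder} for the entire range and is done. You instead propose to merge Schreieder's bound (which you state as covering dimension $\leq 2^{d_r-2}$) with Theorem~\ref{theo:slope} applied to Theorem~\ref{theo:schreiederbound} at the even degree $d_r-1$. But this second source contributes nothing beyond Schreieder: Theorem~\ref{theo:schreiederbound} at degree $d_r-1$ reaches dimension at most $2^{d_r-3}+\tfrac{3(d_r-1)}{2}-5$, and raising the degree by one via Theorem~\ref{theo:slope} reaches dimension at most $2^{d_r-3}+\tfrac{3d_r-11}{2}$, which for every odd $d_r\geq 5$ is strictly below $2^{d_r-2}$ (since $\tfrac{3d_r-11}{2}<2^{d_r-3}$ there). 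Thus your merge still only covers dimensions up to $2^{d_r-2}$, while the corollary demands dimensions up to $2^{d_r-2}+d_r-4$. The ``small family of elementary inequalities'' you leave to be checked will therefore \emph{fail}: for instance, at $d_r=5$ you need a stably irrational quintic of dimension $9$, and at $d_r=7$ you need dimensions $33$ through $35$; neither of your two sources provides these. So the extra machinery in your odd case is both unnecessary (it never improves on Schreieder's direct bound) and insufficient to close the gap you correctly anticipated.
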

\begin{proof}
Our assumptions imply that a very general hypersurface of degree  $d_r$ in $\mathbb P^{n+r-\sum_{i=1}^{r-1}d_i}_k$ is stably irrational,  by Theorem 1.1 in \cite{schreieder}.
\end{proof}

To conclude, we prove the following variant of the above theorem by applying Corollary \ref{coro:obstruction} in a slightly different way.
This in particular extends  the results for intersections of quadrics from \cite{HPT3quad} and \cite{CL} (see Corollary \ref{coro:quadrics}).

\begin{theo}\label{theo:ci2}
Let $n$ be a positive integer and let $d$ be an integer which is either $\le 34$ or a prime power. 
Let $d_1,\ldots,d_r$ be positive integers and assume that a very general complete intersection
of multidegree $(d_1,\ldots,d_r)$ in $\mathbb{P}^{n+r}_k$ is stably irrational. Then a very general
complete intersection of multidegree $(d,d_1,\ldots,d_r)$ in $\mathbb{P}^{n+r+d}_k$ is also stably irrational. 
\end{theo}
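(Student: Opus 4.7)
The plan is to use a degeneration analogous to the one in the proof of Theorem~\ref{theo:ci}, in which the degree $d$ equation splits as a product of $d$ linear forms. Let $z_0,\ldots,z_{n+r},y_1,\ldots,y_d$ be homogeneous coordinates on $\mathbb{P}^{n+r+d}$, and let $G,F_1,\ldots,F_r$ be very general homogeneous polynomials over $k$ of degrees $d,d_1,\ldots,d_r$ in these variables. Consider
\[
\cY=\mathrm{Proj}\,R[z,y]/\bigl(tG-y_1y_2\cdots y_d,\,F_1,\ldots,F_r\bigr)\subset \mathbb{P}^{n+r+d}_R,
\]
whose generic fiber $\cY_K$ is a smooth complete intersection of multidegree $(d,d_1,\ldots,d_r)$ over $K$. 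By Corollary~\ref{coro:verygen}, it suffices to prove that $\cY_K$ is stably irrational.

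The first step is to verify that $\cY$ is strictly toroidal, in the spirit of the proofs of Theorems~\ref{theo:quadcub}, \ref{theo:33} and \ref{theo:ci}. Granting this, Theorem~\ref{theo:vol} yields
\[
\Volsb(\sbir{\cY_K})=\sum_{\emptyset\neq T\subseteq\{1,\ldots,d\}}(-1)^{|T|-1}\sbir{E_T},
\]
where $E_T=\cY\cap\{y_j=0:j\in T\}$ is the complete intersection cut out by $F_1=\cdots=F_r=0$ in the linear subspace $L_T\cong\mathbb{P}^{n+r+d-|T|}_k$. For very general $F_i$, all strata $E_T$ with $|T|=m$ are very general complete intersections of multidegree $(d_1,\ldots,d_r)$ in $\mathbb{P}^{n+r+d-m}_k$, and so by Corollary~\ref{coro:verygen} their stable birational classes in $\Z[\SB_k]$ all coincide with a common class $\sbir{X_m}$. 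In particular $\sbir{X_d}$ is the class of a very general $(d_1,\ldots,d_r)$-complete intersection in $\mathbb{P}^{n+r}_k$, which differs from $\sbir{\Spec k}$ by hypothesis. Grouping terms by $m=|T|$,
\[
\Volsb(\sbir{\cY_K})=\sum_{m=1}^{d}(-1)^{m-1}\binom{d}{m}\sbir{X_m}.
\]

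The final step exploits the prime power hypothesis $d=p^e$. By Kummer's theorem, $\binom{d}{m}\equiv 0\pmod p$ for every $1\le m\le d-1$. When we expand the right-hand side above in the $\Z$-basis $\SB_k$ of $\Z[\SB_k]$, the coefficient of $\sbir{X_d}$ equals $\sum_{m:\,\sbir{X_m}=\sbir{X_d}}(-1)^{m-1}\binom{d}{m}$, which is congruent to $(-1)^{d-1}$ modulo $p$ and therefore nonzero. Since $\sbir{X_d}\neq\sbir{\Spec k}$, this gives $\Volsb(\sbir{\cY_K})\neq\sbir{\Spec k}$, so $\cY_K$ is not stably rational by Corollary~\ref{coro:obstruction}.

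The main obstacle will be step two: the total space $\cX=\{tG-y_1\cdots y_d=0\}$ has singular points along $\{t=G=y_i=y_j=0\}$ for each pair $i\neq j$, so one must show that, for very general $G$ and $F_i$, the intersection $\cY$ is nonetheless strictly toroidal, either by arguing that $\cY$ avoids those singular points of $\cX$ or by identifying the correct local toric model near them, in analogy with earlier arguments in the paper.
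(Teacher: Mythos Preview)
Your overall strategy---degenerate the degree $d$ equation to a product of $d$ coordinates, compute $\Volsb$ as an alternating sum over strata, and use that $\binom{d}{m}\equiv 0\pmod p$ for $1\le m\le d-1$ when $d=p^e$---is exactly the one the paper uses. However, there is a genuine gap in your second step.

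You claim that for very general $F_1,\ldots,F_r$, all strata $E_T$ with $|T|=m$ have the same stable birational class, citing Corollary~\ref{coro:verygen}. That corollary says no such thing: it only asserts that the locus of stably birational geometric fibres is closed under specialization, which is precisely what allows one to prove results like Theorem~\ref{theo:variation} and Corollary~\ref{coro:shinder} showing that very general members of a stably irrational family are typically \emph{not} stably birational to one another. For a very general choice of $F_1,\ldots,F_r$ there is no reason for the $\binom{d}{m}$ schemes $E_T$ with $|T|=m$ to share a class in $\Z[\SB_k]$, and without this your grouping into terms $\binom{d}{m}\sbir{X_m}$ collapses. The coefficient of $\sbir{E_{\{1,\ldots,d\}}}$ in the basis $\SB_k$ is then a signed count of individual strata, not a sum of binomial coefficients, and the mod $p$ argument no longer applies.

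The paper's fix is elegant and costs nothing: choose $F_1,\ldots,F_r$ to be invariant under the action of $S_d$ permuting the coordinates $y_1,\ldots,y_d$ (still very general subject to that constraint). Then $S_d$ permutes the strata $E_T$ transitively within each cardinality, so they are literally isomorphic, and your formula
\[
\Volsb(\sbir{\cY_K})=\sum_{m=1}^{d}(-1)^{m-1}\binom{d}{m}\sbir{E_{\{1,\ldots,m\}}}
\]
becomes valid. Your endgame then works verbatim: whatever subset of the classes $\sbir{E_{\{1,\ldots,m\}}}$ happens to coincide with $\sbir{E_{\{1,\ldots,d\}}}$, the resulting coefficient is $\equiv(-1)^{d-1}\pmod p$ and hence nonzero. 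Note also that the restriction of $S_d$-invariant forms to $\{y_1=\cdots=y_d=0\}$ still yields a very general complete intersection in $\mathbb{P}^{n+r}_k$, so the hypothesis is available.

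On your flagged ``main obstacle'': the paper disposes of strict toroidality in one line via Example~\ref{exam:standardmod}, using the model $\Spec R[y_0,\ldots,y_d]/(ty_0-y_1\cdots y_d)$. For a general choice of $G,F_1,\ldots,F_r$ (even subject to the symmetry), the map $(G,y_1,\ldots,y_d)$ is smooth along $\cY$ where needed, so $\cY$ admits, Zariski-locally, a smooth morphism to this toric model. No blow-up is required.
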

\begin{proof}
Consider the polynomial ring $A=k[z_0,\ldots,z_{n+r},w_1,\ldots,w_d]$. The symmetric group $G=S_d$ acts on $A$ by permuting the variables $w_1,\ldots,w_d$  (leaving the $z_i$ unchanged). Let $(F_0,\ldots,F_{r})$ be an $(r+1)$-tuple of homogeneous polynomials in $A$ of degrees $d,d_1,\ldots,d_r$ respectively. We assume that this tuple is very general subject to the condition that the forms $F_1,\ldots,F_{r}$ are invariant under the action of $G$.
 Let $\cY$ be the closed subscheme of $$\mathbb{P}^{n+r+d}_{R}=\mathrm{Proj}\,(A\otimes_k R)$$ defined by the equations 
$$F_1=\ldots=F_{r}=tF_{0} - w_1\cdots w_d = 0.$$
 The $R$-scheme $\cY$ is strictly toroidal, by Example \ref{exam:standardmod}.
 Thus we can apply Theorem \ref{theo:vol} to compute the stable birational volume   
 $\Volsb(\sbir{\cY_K})$ as an alternating sum of the classes of the strata of $\cY_k$ in $\Z[\SB_k]$. 
 
 The special fiber $\cY_k$ consists of $d$ irreducible components $Y_1,\ldots,Y_d$, which are permuted by the action of $G$ on $\mathbb{P}^{n+r+d}_k$. 
  For each non-empty subset $J$ of $\{1,\ldots, d\}$, let $Y_J=\cap_{j\in J}Y_j$. This scheme is isomorphic to a complete intersection in $\mathbb{P}^{n+r+d-|J|}_k$ of multidegree $(d_1,\ldots,d_r)$. For sets $J$ of fixed cardinality, the schemes $Y_J$ are permuted by the action of $G$, so in particular they are all isomorphic. The smallest intersection  $Y_{\{1,\ldots,d\}}$ is  a very general complete intersection of multidegree $(d_1,\ldots,d_r)$ in $\mathbb{P}^{n+r}_k$, which is stably irrational by assumption. From this we get
 $$\Volsb(\sbir{\cY_K})={d \choose 1}\sbir{Y_1}-{d \choose 2}\sbir{Y_{\{1,2\}}}+\ldots+(-1)^{d+1}\sbir{Y_{\{1,\ldots,d\}}}.$$If $d$ is a prime power, say $d=p^\nu$, then each of the $d-1$ first binomial coefficients appearing in the sum is divisible by $p$. This implies that the above expression is different from $\sbir{\Spec\,k}$ in $\Z[\SB_k]$. the same conclusion holds if $d\le 34$ by a direct computation. Thus $\cY_K$ is not stably rational, by Corollary \ref{coro:obstruction}.
\end{proof}

%
%
%
\begin{coro}\label{coro:quadrics}
Let $n$ be an integer such that $n\geq 3$. Let $r$ be an integer such that $r\geq 3$ and $r \geq n-1$. 
 Then a very general complete intersection of $r$ quadrics in $\mathbb{P}^{n+r}_k$ is stably irrational. 
\end{coro}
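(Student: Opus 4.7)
The plan is a direct induction using Theorem \ref{theo:ci2} with the prime power $d=2$. Each application of that theorem converts a stably irrational complete intersection of multidegree $(d_1,\ldots,d_r)$ in $\mathbb{P}^{n+r}_k$ into a stably irrational complete intersection of multidegree $(2,d_1,\ldots,d_r)$ in $\mathbb{P}^{n+r+2}_k$. The crucial observation is that the dimension $n$ of the complete intersection is preserved under this operation, while both the number of factors $r$ and the ambient projective dimension $n+r$ increase by the right amounts (one and two, respectively) to keep us inside the family of complete intersections of $r$ quadrics in $\mathbb{P}^{n+r}_k$.

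Thus, for each fixed $n\geq 3$, it suffices to establish the case of the smallest admissible value $r_0 = \max(3, n-1)$ as a base case; every larger admissible $r$ is then obtained by iterating Theorem \ref{theo:ci2} (with all $d_i = 2$) finitely many times. For $n=4$ the base case $r_0 = 3$ is precisely the theorem of Hassett, Pirutka and Tschinkel \cite{HPT3quad} on three quadrics in $\mathbb{P}^7_k$. The remaining base cases (for $n=3$ with $r_0 = 3$, and for $n\geq 5$ with $r_0 = n-1$) are to be supplied by the work of Chatzistamatiou and Levine \cite{CL}, whose results furnish stably irrational complete intersections of the appropriate number of quadrics in the appropriate projective spaces. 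Once each base case is in place, the induction on $r$ is immediate.

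The main work in this argument is therefore not the induction itself, which is a mechanical invocation of Theorem \ref{theo:ci2}, but rather checking that the base cases from \cite{HPT3quad} and \cite{CL} exactly cover the minimal admissible $r_0$ for every $n\geq 3$. In particular, one must confirm that \cite{CL} supplies stable irrationality of $n-1$ quadrics in $\mathbb{P}^{2n-1}_k$ for $n\geq 5$, as well as of three quadrics in $\mathbb{P}^6_k$ for $n=3$; no genuine geometric input beyond these references and Theorem \ref{theo:ci2} is required.
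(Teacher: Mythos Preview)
There is a genuine gap: your ``crucial observation'' that the dimension $n$ is preserved under Theorem~\ref{theo:ci2} with $d=2$ is incorrect. You yourself note that $r$ increases by $1$ while the ambient dimension $n+r$ increases by $2$; this forces $n$ to increase by $1$, not stay fixed. Concretely, Theorem~\ref{theo:ci2} takes $r$ quadrics in $\mathbb{P}^{n+r}_k$ (a complete intersection of dimension $n$) to $r+1$ quadrics in $\mathbb{P}^{n+r+2}_k$, which has dimension $n+1$. So fixing $n$ and inducting on $r$ does not work as stated.

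Because of this, your proposed base cases are also wrong. For each $n\geq 5$ you would need, as input, stable irrationality of $n-1$ quadrics in $\mathbb{P}^{2n-1}_k$; but these are precisely among the cases the corollary is meant to establish, and they are not supplied by \cite{CL} (the paper explicitly lists, e.g., $(2,2,2,2)$ in dimension $5$ as a \emph{new} case). Likewise, the $n=3$, $r=3$ case is \cite[\S4.4]{HT}, not \cite{CL}.

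The paper's proof repairs this as follows. First observe that for $n<r$ the complete intersection of $r$ quadrics in $\mathbb{P}^{n+r}_k$ has non-negative Kodaira dimension and is therefore automatically stably irrational; hence only the cases $n=r$ and $n=r+1$ require work. These two sequences are then handled by a single induction on $r$: the base cases $r=3$ are $n=3$ (\cite[\S4.4]{HT}) and $n=4$ (\cite{HPT3quad}), and each application of Theorem~\ref{theo:ci2} with $d=2$ sends the pair $(n,r)$ to $(n+1,r+1)$, exactly propagating the two needed cases.
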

\begin{proof}
  We only need to consider the cases where $n=r$ or $n=r+1$, since, for $n<r$, a complete intersection of $r$ quadrics in $\mathbb{P}^{n+r}_k$ has non-negative Kodaira dimension.
  For $r=3$, the $n=3$ case is proved in \cite[\S4.4]{HT} and the $n=4$ case is proved in \cite{HPT3quad}. For $r\geq 4$ the result follows from Theorem \ref{theo:ci2}, by induction on $r$.
 \end{proof}
 
 \begin{coro}\label{coro:ci2}
 Let $n$ and $d$ be positive integers. Assume that a very general degree $d$ hypersurface in $\mathbb{P}^{n+1}_k$ is stably irrational. Then very general complete intersections of the following multidegrees are stably irrational:
\begin{itemize} 
\item  multidegree $(2,d)$ in $\mathbb{P}^{n+3}_k$; 
\item multidegree $(3,d)$ in $\mathbb{P}^{n+4}_k$;
\item multidegree $(2,2,d)$ in $\mathbb{P}^{n+5}_k$;
\item  multidegree $(2,3,d)$ in $\mathbb{P}^{n+6}_k$.
\end{itemize} 
 \end{coro}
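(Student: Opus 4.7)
The plan is to deduce each of the four cases by iterated application of Theorem \ref{theo:ci2}, using the starting hypothesis that a very general degree $d$ hypersurface in $\mathbb{P}^{n+1}_k$ is stably irrational. This initial hypersurface is a "complete intersection" of multidegree $(d)$ with $r=1$ in $\mathbb{P}^{n+1}_k$, which matches the input format of Theorem \ref{theo:ci2} after renaming. The only thing to check in each case is that the prime powers prepended to the multidegree are indeed prime powers (namely $2$ and $3$), and that the ambient projective space has the correct dimension.

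Concretely, I would argue as follows. For the $(2,d)$ case, apply Theorem \ref{theo:ci2} with the prime power $d=2$ to the hypersurface of degree $d$ in $\mathbb{P}^{n+1}_k$: the output is a very general complete intersection of multidegree $(2,d)$ in $\mathbb{P}^{n+1+2}_k=\mathbb{P}^{n+3}_k$, which is stably irrational. The $(3,d)$ case is identical, using the prime power $3$ instead. For the $(2,2,d)$ case, apply Theorem \ref{theo:ci2} twice: first with $d=2$ to obtain stable irrationality of a very general $(2,d)$ complete intersection in $\mathbb{P}^{n+3}_k$, then again with $d=2$ to obtain stable irrationality of a very general $(2,2,d)$ complete intersection in $\mathbb{P}^{n+3+2}_k=\mathbb{P}^{n+5}_k$. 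For the $(2,3,d)$ case, apply Theorem \ref{theo:ci2} first with $d=3$ (using the $(d)$ hypersurface as input) to get stable irrationality of a very general $(3,d)$ complete intersection in $\mathbb{P}^{n+4}_k$, and then apply it once more with $d=2$ to get a very general $(2,3,d)$ complete intersection in $\mathbb{P}^{n+4+2}_k=\mathbb{P}^{n+6}_k$.

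There is no real obstacle here: this is purely a bookkeeping exercise, since each application of Theorem \ref{theo:ci2} is justified by the fact that $2$ and $3$ are prime powers, and the ambient dimensions track exactly as $\mathbb{P}^{n+r}_k \mapsto \mathbb{P}^{n+r+d'}_k$ when the prime power $d'$ is prepended. One small remark: the order of the entries in a multidegree is irrelevant for the isomorphism class of a very general complete intersection, so using $d=3$ followed by $d=2$ gives the same conclusion in the $(2,3,d)$ case as the stated order; this is why the two-step derivation is legitimate.
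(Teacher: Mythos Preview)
Your proposal is correct and matches the paper's own proof, which simply states that these are special cases of Theorem \ref{theo:ci2}; you have merely spelled out the bookkeeping of the iterated applications in more detail than the paper does.
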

 \begin{proof}
 These are special cases of Theorem \ref{theo:ci2}.
 \end{proof}
 
 Theorems \ref{theo:ci} and \ref{theo:ci2} settle many new cases for the rationality question for very general complete intersections, especially in high dimensions, and at the same time cover a large portion of the previously known cases in a uniform way. To give a sample in low dimension, 
 the combined results in this paper show that very general complete intersection fivefolds of the following multidegrees are stably irrational: 
$$\begin{array}{l}
(4), (5), (6), (2,4), (2,5), (3,3), (3,4), (2,2,3), (2,2,4), (2,3,3), 
\\ (2,2,2,2), (2,2,2,3), (2,2,2,2,2).
\end{array}$$  
To the best of our knowledge, the cases $(4)$, $(3,3)$, $(2,2,3)$ and $(2,2,2,2)$ are new.  
 The cases $(5)$ and $(6)$ lie in Schreieder's range \cite{schreieder}, and the remaining ones are covered by Theorem 6.1 in \cite{CL}. Since a smooth complete intersection of three quadrics in $\mathbb{P}^8_k$ is rational by \cite{tjurin}, the only remaining open cases in dimension $5$ are the cubic fivefold and the intersection of a quadric and a cubic in $\mathbb{P}^7_k$. 
 
 In dimension six, our results imply that very general complete intersections of the following multidegrees are stably irrational:   
$$\begin{array}{l}
(5), (6), (7), (2,4), (2,5), (2,6), (3,4), (3,5), (4,4), 
  (2,2,4), (2,2,5), (2,3,3), (2,3,4), 
  \\ (3,3,3),
 (2,2,2,3), (2,2,2,4), (2,2,3,3), (2,2,2,2,2), (2,2,2,2,3), (2,2,2,2,2,2)
\end{array}$$    
Here the cases not covered by \cite{schreieder} and \cite{CL} are    
$$(2,4), (2,5), (2,3,3), (2,2,2,3), (2,2,2,2,2), (2,2,2,2,2,2).$$


\begin{thebibliography}{SGAIII00}
\bibitem[AM72]{Artin-Mumford}
M.~Artin and D.~Mumford.
\newblock Some elementary examples of unirational varieties which are not rational.
\newblock {\em Proc. London Math. Soc. (3)} 25:75--95, 1972.

\bibitem[ABBP20]{ABBP}
A.~Auel, C.~B{\"o}hning, H.-Ch.~G.v.Bothmer and A.~Pirutka. 
\newblock Conic bundles with nontrivial unramified Brauer group over threefolds. 
\newblock {\em J. Algebraic Geom}. 29:285--327, 2020.

\bibitem[ABP18]{ABP}
A.~Auel,  C.~B{\"o}hning and A.~Pirutka. 
\newblock Stable rationality of quadric and cubic surface bundle fourfolds.
\newblock {\em  Eur. J. Math.} 4(3): 732--760, 2018.

\bibitem[Ba17]{batyrev}
V.~Batyrev.
\newblock The stringy Euler number of Calabi-Yau hypersurfaces in toric varieties and the Mavlyutov duality. 
\newblock {\em Pure Appl. Math. Q.} 13(1):1--47, 2017.




\bibitem[CG72]{CG}
H.~Clemens, and P.~Griffiths.
\newblock The intermediate Jacobian of the cubic threefold.
\newblock Ann. Math. 281-356, 1972.

\bibitem[Ch05]{Che}
I.~Cheltsov. 
\newblock Birationally rigid del Pezzo fibrations.
\newblock {\em Manuscr. Math.} 116(4):385--396, 2005.

\bibitem[CL17]{CL}
A.~Chatzistamatiou and M.~Levine.
\newblock Torsion orders of complete intersections.  
\newblock {\em Algebra Number Theory} 11(8):1779--1835, 2017. 

\bibitem[CT18]{CT}
J.-L.~Colliot-Th\'el\`ene, 
\newblock {Non rationalité stable d'hypersurfaces cubiques sur des corps non algébriquement clos.} In: {\em $K$-Theory--Proceedings of the International Colloquium, Mumbai, 2016}. Hindustan Book Agency, New Delhi, pages 349--366, 2018.

\bibitem[CTP16]{CTP}
J-L.~Colliot-Th\'el\`ene and A.~Pirutka.
\newblock {Hypersurfaces quartiques de dimension 3: non-rationalit\'e stable}.
\newblock {\em Ann. Sci. \'Ec. Norm. Sup. (4)} 49(2):371--397, 2016.

\bibitem[CM98]{CM}
A. ~Conte and J. P. Murre. 
\newblock {On a theorem of Morin on the unirationality of the quartic fivefold}
\newblock {Atti Accad. Sci. Torino Cl. Sci. Fis. Mat. Natur.} 132:49--59, 1998.

\bibitem[CLS11]{CLS}
D.A.~Cox, J.B.~Little and H.K.~Schenck. 
\newblock {\em Toric varieties.}
\newblock Volume~124 of {\em Graduate Studies in Mathematics.} American Mathematical Society, Providence, RI, 2011.


\bibitem[Gr11]{gross-tropical}
M.~Gross.
\newblock {\em Tropical geometry and mirror symmetry.}
\newblock Volume~114 of {\em CBMS Regional Conference Series in Mathematics.} American Mathematical Society, Providence, RI, 2011.


\bibitem[HPT17]{HPT3quad}
B.~Hassett, A.~Pirutka and Yu.~Tschinkel.
\newblock Intersections of three quadrics in $\mathbb{P}^7$.
\newblock {\em Surveys in Differential Geometry} 22(1):259--274, 2017.


\bibitem[HPT18]{HPTbideg2}
B.~Hassett, A.~Pirutka and Yu.~Tschinkel.
\newblock Stable rationality of quadric surface bundles over surfaces.
\newblock {\em Acta Math.} 220(2):341--365, 2018.


\bibitem[HPT19]{HPTdouble}
B.~Hassett, A.~Pirutka and Yu.~Tschinkel.
\newblock A very general quartic double fourfold is not stably rational.
\newblock {\em Algebr. Geom.} 6(1):64--75, 2019.

\bibitem[HK12]{helm-katz}
D.~Helm and E.~Katz.
\newblock Monodromy filtrations and the topology of tropical varieties.
\newblock {\em Canad. J. Math.} 64(4):845--868, 2012.

\bibitem[HT19]{HT}
B.~Hassett and Yu.~Tschinkel. 
\newblock On stable rationality of Fano threefolds and del Pezzo fibrations.
\newblock {\em J. Reine Angew. Math.} 751:275--287, 2019.

\bibitem[HZ00]{HZ}
C.~Haase and G.~Ziegler. 
\newblock On the maximal width of empty lattice simplices. 
\newblock {\em European J. Combin.} 21(1):111--119, 2000.

\bibitem[Kh77]{Khovanskii}
A.G.~Hovanski{\u{i}}.
\newblock Newton polyhedra, and toroidal varieties.
\newblock  {\em Functional Anal. Appl.} 11(4):289--296, 1977.

 \bibitem[HK06]{HK}
E.~Hrushovski and D.~Kazhdan.
\newblock Integration in valued fields.
\newblock In: {\em Algebraic
  geometry and number theory.} Vol.~253 of {\em Progr. Math.} Birkh\"auser Boston,
  Boston, MA, pages 261--405, 2006.


\bibitem[IM71]{IM}
V.~Iskovskih and Y.~Manin. 
\newblock Three-dimensional quartics and counterexamples to the Lüroth problem.
\newblock Mathematics of the USSR-Sbornik 15.1, 141 1971.


\bibitem[Ko95]{kollar}
J.~Kollár.
\newblock Nonrational hypersurfaces.
\newblock J. Amer. Math. Soc. 8, 241-249, 1995.

\bibitem[KT19]{KT}
M.~Kontsevich and Yu.~Tschinkel.
\newblock Specialization of birational types. 
\newblock {\em Invent. Math.} 217(2):415--432, 2019.

\bibitem[KO18]{krylovokada}
I. Krylov and T. Okada, 
\newblock Stable rationality of del Pezzo Fibrations of low degree over projective spaces. 
\newblock {\em Int. Math. Res. Not.} rny 252, 2018.

\bibitem[Ko76]{kushnirenko}
A.G.~Kouchnirenko.
\newblock {Poly{\`e}dres de Newton et nombres de Milnor.}
\newblock {\em Invent. Math.} 32(1):1--31, 1976.

\bibitem[LL03]{LaLu}
M.~Larsen and V.~Lunts.
\newblock Motivic measures and stable birational geometry.
\newblock {\em Moscow Math. J.} 3(1):85--95, 2003.

\bibitem[LQ11]{luxton-qu}
M.~Luxton and Z.~Qu.
\newblock Some results on tropical compactifications.
\newblock	{\em Trans. Amer. Math. Soc.} 363(9):4853--4876, 2011.

\bibitem[MS15]{MS}
D.~Maclagan and B.~Sturmfels.
\newblock {\em Introduction to tropical geometry.}
\newblock Volume~161 of {\em Graduate Studies in Mathematics.} American Mathematical Society, Providence, RI, 2015.

\bibitem[NO20]{NO}
J.~Nicaise and J.C.~Ottem.
\newblock A refinement of the motivic volume, and specialization of birational types.
\newblock {\em Preprint}, arXiv:2004.08161, 2020.

\bibitem[NPS16]{NPS}
J.~Nicaise, S.~Payne and F.~Schroeter.
\newblock Tropical refined curve counting via motivic integration.
\newblock {\em Geom. Topol.} 22(6):3175--3234, 2018.

\bibitem[NS19]{NiSh}
J.~Nicaise and E.~Shinder.
\newblock The motivic nearby fiber and degeneration of stable rationality.
\newblock {\em Invent. Math.} 217(2):377--413, 2019.

\bibitem[Pu00]{pukhlikov-paper}
A.~Pukhlikov. 
\newblock Birationally rigid Fano fibrations. 
\newblock {\em Izvestiya Math.} 64(3):563--581, 2000.



\bibitem[Pu13]{pukhlikov-book}
A.~Pukhlikov.
\newblock {\em Birationally rigid varieties.} Vol.~190 of {\em Mathematical Surveys and Monographs}. American Mathematical Society, Providence, RI, 2013. 

\bibitem[Re87]{reid}
M.~Reid.
\newblock Young person's guide to canonical singularities. 
\newblock In: {\em Algebraic geometry, Bowdoin, 1985 (Brunswick, Maine, 1985)}. Vol.~46 of 
{\em Proc. Sympos. Pure Math.}, Part 1. Amer. Math. Soc., Providence, RI, pages~345--414, 1987.

\bibitem[Sch18]{schreieder-conic}
S.~Schreieder.
\newblock Quadric surface bundles over surfaces and stable rationality.
\newblock {\em Algebra Number Theory} 12:479--490, 2018.

\bibitem[Sch19a]{schreieder}
S.~Schreieder.
\newblock Stably irrational hypersurfaces of small slopes.
\newblock {\em J. Amer. Math. Soc.} 32(4):1171--1199, 2019.

\bibitem[Sch19b]{schreieder-var}
S.~Schreieder.
\newblock Variation of stable birational types in positive characteristic.
\newblock {\em {\'E}pijournal G\'eom. Alg\'ebrique} 3, Article Nr.~20, 2019.

\bibitem[Sch19c]{schreieder-torsion}
S.~Schreieder.
\newblock Torsion orders of Fano hypersurfaces.
\newblock {\em Preprint}, arXiv:1911.12271 , 2019.





\bibitem[Sh19]{shinder}
E.~Shinder.
\newblock Variation of stable birational types of hypersurfaces (with an appendix by Claire Voisin).
\newblock {\em Preprint}, arXiv:1903.02111, 2019.

\bibitem[So01]{sobolev}
I.V.~Sobolev. 
\newblock On a series of birationally rigid varieties with a pencil of Fano hypersurfaces.
\newblock {\em Sb. Math.} 192(9-10):1543--1551, 2001.

\bibitem[Stacks]{stacks-project}
 {The {Stacks Project Authors}}.
 \newblock {\em Stacks Project}.
 \newblock {\url{https://stacks.math.columbia.edu}}, 2019.

\bibitem[Te07]{tevelev}
J.~Tevelev.
\newblock Compactifications of subvarieties of tori.
\newblock  {\em Amer. J. Math.} 129(4):1087--1104, 2007.

\bibitem[Tj75]{tjurin}
A.N.~Tjurin. 
\newblock The intersection of quadrics. 
\newblock {\em Uspehi Mat. Nauk}
30(6(186)):51--99, 1975.

\bibitem[To16]{totaro}
B.~Totaro.
\newblock Hypersurfaces that are not stably rational.
\newblock {\em J. Amer. Math. Soc.} 29(3):883--891, 2016.

\bibitem[Vi89]{viro}
O.~Viro.
\newblock Plane real algebraic curves: constructions with controlled topology.
\newblock {\em Algebra i analiz} 1:1--73, 1989 (Russian). English translation in:
{\em Leningrad Math. J.} 1(5):1059--1134, 1990.

\bibitem[Vi06]{viro-thesis}
O.~Viro.
\newblock Patchworking real algebraic varieties.
\newblock {Preprint}, arXiv:math/0611382, 2006.

\bibitem[Vo15]{voisin}
C.~Voisin.
\newblock Unirational threefolds with no universal codimension 2 cycle.
\newblock {\em  Invent. Math.} 201(1):207--237, 2015.

\end{thebibliography}
\end{document}